\crefname{hypothesis}{Hypothesis}{Hypotheses}
\title{Non-Expansive Mappings in Two-Time-Scale Stochastic Approximation: Finite-Time Analysis}
\author{Siddharth Chandak\thanks{Electrical Engineering, Stanford University, CA, USA.
  (\email{chandaks@stanford.edu}).}}
\newcommand{\xstar}{x^*}
\newcommand{\xhat}{\hat{x}}
\newcommand{\ystar}{y^*}
\newcommand{\yhat}{\hat{y}}
\newcommand{\EE}{\mathbb{E}}
\newcommand{\FF}{\mathcal{F}}
\newcommand{\PP}{\mathcal{P}}
\newcommand{\XX}{\mathcal{X}}
\newcommand{\Ystar}{\mathcal{Y}^*}
\newcommand{\Yhat}{\mathcal{\hat{Y}}}
\newcommand{\RR}{\mathbb{R}}
\newcommand{\cc}{\mathfrak{c}}
\newcommand{\afrak}{\mathfrak{a}}
\newcommand{\bfrak}{\mathfrak{b}}
\newcommand{\argmin}{\arg\min}
\newcommand{\gradJ}{\nabla J}
\theoremstyle{plain}
\begin{document}

\maketitle

% REQUIRED
\begin{abstract}
Two-time-scale stochastic approximation algorithms are iterative methods used in applications such as optimization, reinforcement learning, and control. Finite-time analysis of these algorithms has primarily focused on fixed point iterations where both time-scales have contractive mappings. In this work, we broaden the scope of such analyses by considering settings where the slower time-scale has a non-expansive mapping. For such algorithms, the slower time-scale can be viewed as a stochastic inexact Krasnoselskii-Mann iteration. We also study a variant where the faster time-scale has a projection step which leads to non-expansiveness in the slower time-scale. We show that the last-iterate mean square residual error for such algorithms decays at a rate $\mathcal{O}(1/k^{1/4-\epsilon})$, where $\epsilon>0$ is arbitrarily small. We further establish almost sure convergence of iterates to the set of fixed points. We demonstrate the applicability of our framework by applying our results to minimax optimization, linear stochastic approximation, and Lagrangian optimization.
\end{abstract}

% REQUIRED
\begin{keywords}
Two-time-scale stochastic approximation, non-expansive mapping, Krasnoselskii-Mann iteration, finite-time bound, minimax optimization
\end{keywords}

% REQUIRED
\begin{MSCcodes}
47H09, 62L20, 47H10, 68W40, 93E35, 90C47
\end{MSCcodes}

\section{Introduction}
Stochastic Approximation (SA) is a class of iterative algorithms to find the fixed point of an operator given its noisy realizations \cite{Robbins-Monro}. These algorithms have found applications in various areas, including optimization, reinforcement learning \cite{Sutton}, communications and stochastic control \cite{Borkar-book}. Two-time-scale stochastic approximation is a variant of SA, where the iteration is as follows:
\begin{equation*}
    \begin{aligned}
    &x_{k+1}=x_k+\alpha_k(f(x_k,y_k)-x_k+M_{k+1})\\
    &y_{k+1}=y_k+\beta_k(g(x_k,y_k)-y_k+M'_{k+1}).
\end{aligned}
\end{equation*}
Here $x_k\in\RR^{d_1}$ and $y_k\in\RR^{d_2}$ are the coupled iterates updated on two separate time-scales, decided by the step sizes $\alpha_k$ and $\beta_k$. $M_{k+1}$ and $M'_{k+1}$ are noise sequences. The iterate $x_k$ updates on the faster time-scale, i.e., the step size $\alpha_k$ decays slowly. While analyzing the behavior of iterate $x_k$, the iterates $y_k$ are considered quasi-static or slowly moving. The iterate $y_k$ updates on the slower time-scale, i.e., the step size $\beta_k$ decays faster. The behavior of iterate $y_k$ is often studied in the limit where $x_k$ closely tracks $\xstar(y_k)$, where $\xstar(y)$ is the fixed point for $f(\cdot,y)$. The step sizes have to be appropriately chosen for the algorithm to converge.

The majority of analysis for two-time-scale algorithms has occurred under the assumption that the function $f(x,y)$ is contractive in $x$, and the function $g(\xstar(y),y)$ is contractive \cite{ Chandak-TTS-Opti,Doan, Konda}. Under these assumptions, the algorithm converges to $\xstar,\ystar$, the unique solution to the equations: $f(x,y)=x$ and $g(x,y)=y$ \cite{Borkar-book}. In recent years, there have been works that obtain finite-time bounds for iterates, but they also mainly focus on contractive mappings in both time-scales \cite{Chandak-TTS-Opti,Doan}.

In this paper, we consider a non-expansive mapping in the slower iteration, i.e., the mapping $g(\xstar(y),y)$ is non-expansive. Such two-time-scale iterations arise in applications including minimax optimization \cite{jordan-ttsgda}, Lagrangian optimization \cite{two-time-lagrangian}, and generalized Nash equilibrium problems \cite{Chandak}. The slower time-scale in these iterations can be considered a stochastic inexact Krasnoselskii-Mann (KM) algorithm (an iterative algorithm to find the fixed points of a non-expansive operator). Analysis of these iterations requires novel techniques to deal with the errors due to two-time-scale updates in a stochastic setting with non-expansive maps. 

Our main contributions are as follows.
\begin{enumerate}
    \item We present a finite-time bound for our algorithm, which has contractive and non-expansive mappings in the faster and slower time-scales, respectively. We show that the last-iterate mean square residual error decays at a rate $\mathcal{O}(k^{-1/4+\epsilon})$, where $\epsilon>0$ can be arbitrarily small depending on the step size.
    \item We show that the iterates converge to the set of fixed points almost surely. 
    \item We provide a similar result for a projected variant, where the iterates $x_k$, updated in the faster time-scale, are projected to a convex set after each iteration. We show that there exist scenarios where the projection in the faster time-scale leads to the non-expansive mapping (Subsection \ref{subsec:lagrangian}).
    \item We adapt strongly concave-convex minimax optimization, linear SA, and Lagrangian optimization into our framework.
    \item Using similar techniques, we present an additional result where the slower time-scale iteration contains a gradient descent operator for a smooth function. The gradient descent operator is not necessarily non-expansive in this case. In this setting, we show that the minimum, from time $0$ to $k$, of the expected norm of the gradient is bounded by $\mathcal{O}(\log(k)/k^{2/5})$.
\end{enumerate}

\subsection{Related Work}
Applications of SA to reinforcement learning (RL) and optimization have caused a growing interest in obtaining results which bound their finite-time performance. These include mean square error or moment bounds (e.g., \cite{Zaiwei, Srikant}) and high probability or concentration bounds (e.g., \cite{Chandak-conc, Zaiwei-conc}). 

Two-time-scale algorithms with contractive mappings (in both time-scales) play an important role in RL \cite{Sutton-GTD}, and hence finite time bounds for such algorithms have also been studied in detail. In this setting, the bound is obtained on $\EE[\|x_k-\xstar(y_k)\|^2+\|y_k-\ystar\|^2]$. A major focus in this field is on linear two-time-scale SA, i.e., where the maps $f(\cdot)$ and $g(\cdot)$ are linear. In \cite{Konda}, an \textit{asymptotic} convergence rate of $\mathcal{O}(1/k)$ is obtained, while in \cite{tts-new-2, Dalal-linear,Shaan,Kaledin-linear} the result is extended to a finite-time bound. We also study linear two-time-scale SA as a special case. But we study the case where the matrix relevant for the slower time-scale is negative semidefinite instead of Hurwitz or negative definite as assumed in the above works.

Bounds for contractive non-linear two-time-scale SA have also been studied in recent years. A rate of $\mathcal{O}(1/k^{2/3})$ was previously established for this setting in \cite{Doan}. This rate has since been improved to $\mathcal{O}(1/k)$ in \cite{Chandak-TTS-Opti}. Rates for two-time-scale SA with contractions under arbitrary norms and Markovian noise were obtained in \cite{Chandak-TTS-arbitrary}.

Two-time-scale SA has also been studied in settings where the mappings are not contractive. In \cite{Chandak}, the slower time-scale has a non-expansive mapping, but the result is specific to control of strongly monotone games. In \cite{Wu-actor-critic,Doan-actor-critic}, the slower time-scale contains a gradient descent operator $\gradJ(y_k)$ for a smooth function $J(\cdot)$. The operator $\gradJ(\cdot)$ is not co-coercive in general and is different from our framework. Under this setting, bounds on $\min_{0\leq i\leq k}\EE[\|\gradJ(y_i)\|^2]$ are obtained. We give an additional result (Theorem \ref{thm:gradient}), where we match their bound of $\mathcal{O}(\log(k)/k^{2/5})$. We work under assumptions different from \cite{Doan-actor-critic, Wu-actor-critic} where they consider Markovian noise but require boundedness of some operators. We also consider the setting where $J(\cdot)$ is convex, which fits into the framework of Theorem \ref{thm:main}, and obtain a bound on $\EE[\|\gradJ(y_k)\|^2]$.

Our slower time-scale can be written as a stochastic inexact KM iteration. Finite-time bounds for inexact KM iterations have been obtained under a variety of settings \cite{Davis-splitting,KM-book,Liang-inexact,inertial-KM}. In recent years, there have also been works which give bounds for stochastic KM iterations in Banach spaces \cite{Bravo-1,Bravo-2}. Our analysis requires an appropriate combination of errors due to two-time-scale updates with the techniques used for stochastic inexact KM iterations. 

\subsection{Outline and Notation}The paper is structured as follows: Section \ref{sec:main} sets up and presents the main result. The projected variant is presented in Subsection \ref{subsec:projected}. Section \ref{sec:outline} contains a proof sketch for our main result. Section \ref{sec:gradient-result} presents the framework where the slower time-scale has a gradient descent operator. Section \ref{sec:applications} presents the above-listed applications and how they can be incorporated into our framework, and Section \ref{sec:experiments} presents some numerical experiments for these applications. Finally, Section \ref{sec:conclusion} concludes the paper and presents some future directions. 

Throughout this work, $\|\cdot\|$ denotes the Euclidean norm, and $\langle x_1,x_2\rangle$ denotes the inner product given by $x_1^Tx_2$.

\section{Non-Expansive Mappings in Slower Time-Scale}\label{sec:main}
The primary focus of this paper is on two-time-scale iterations where the faster and slower time-scales are fixed point iterations with contractive and non-expansive mappings, respectively. We present the main results of this paper in this section after setting up the necessary notation and assumptions. Consider the following coupled iterations. 
\begin{equation}\label{iter-main}
    \begin{aligned}
    &x_{k+1}=x_k+\alpha_k(f(x_k,y_k)-x_k+M_{k+1})\\
    &y_{k+1}=y_k+\beta_k(g(x_k,y_k)-y_k+M'_{k+1}).
\end{aligned}
\end{equation}
Here $x_k\in\RR^{d_1}$ is the iterate updating on the faster time-scale, and $y_k\in\RR^{d_2}$ is the iterate updating on the slower time-scale. $\alpha_k$ and $\beta_k$ are the respective step sizes that dictate the update speed and are formally defined in Assumption \ref{assu:stepsize}. $M_{k+1}\in\RR^{d_1}$ and $M'_{k+1}\in\RR^{d_2}$ are martingale difference noise sequences (Assumption \ref{Martingale}). 

We begin by assuming that the function $f(x,y)$ is contractive with respect to $x$.
\begin{assumption}\label{f-contrac}
The function $f:\RR^{d_1}\times \RR^{d_2}\mapsto \RR^{d_1}$ is contractive in $x$ for each $y\in\RR^{d_2}$. Specifically, there exists a constant $0\leq \mu<1$, independent of $y$, such that
   $$\|f(x_1,y)-f(x_2,y)\|\leq \mu \|x_1-x_2\|,$$
for all $x_1,x_2\in\RR^{d_1}$ and $y\in\RR^{d_2}$.
\end{assumption}
By the Banach contraction mapping theorem, the above assumption implies that $f(\cdot,y)$ has a unique fixed point for each $y$. That is, for each $y\in\RR^{d_2}$, there exists unique $\xstar(y)$ such that $f(\xstar(y),y)=\xstar(y)$. The following is an assumption on the function $g:\RR^{d_1}\times\RR^{d_2}\mapsto \RR^{d_2}$.
\begin{assumption}\label{g-nonexp}
The function $g(\xstar(\cdot),\cdot):\RR^{d_2}\mapsto \RR^{d_2}$ is non-expansive, i.e., 
    $$\|g(\xstar(y_1),y_1)-g(\xstar(y_2),y_2)\|\leq \|y_1-y_2\|,$$
for all $y_1,y_2\in\RR^{d_2}$. Let us denote the set of fixed points of $g(\xstar(\cdot),\cdot)$ by $\Ystar$, i.e., $\Ystar=\{y\in\RR^{d_2}\mid g(\xstar(y),y)=y\}$. We assume that the set $\Ystar$ is non-empty.
\end{assumption}

Before discussing other required assumptions, we present alternate but equivalent formulations of the above assumptions. Define operator $T_1(x,y)=x-f(x,y)$. Then Assumption \ref{f-contrac} is equivalent to the operator $T_1(x,y)$ being strongly monotone in $x$:
$$\langle T_1(x_1,y)-T_1(x_2,y),x_1-x_2\rangle\geq (1-\mu)\|x_1-x_2\|^2.$$
Similarly, define operator $T_2(y)=y-g(\xstar(y),y)$. Then the non-expansiveness in Assumption \ref{g-nonexp} is equivalent to the operator $T_2$ being co-coercive, i.e., 
$$\langle T_2(y_1)-T_2(y_2),y_1-y_2\rangle \geq \frac{1}{2}\|T_2(y_1)-T_2(y_2)\|^2.$$
For completeness, the complete equivalence, including the proofs for the above statements and the converse directions, is presented in Appendix \ref{app:relations}.

Next, we present two assumptions required for our analysis. These are standard assumptions associated with analysis of SA iterations \cite{Borkar-book,Zaiwei}. The first assumption states that the functions $f$ and $g$ are Lipschitz. 
\begin{assumption}\label{Lipschitz}
Functions $f(\cdot,\cdot)$ and $g(\cdot,\cdot)$ are $L$-Lipschitz, i.e., 
    \begin{equation*}
    \|f(x_1,y_1)-f(x_2,y_2)\|+\|g(x_1,y_1)-g(x_2,y_2)\|\leq L(\|x_1-x_2\|+\|y_1-y_2\|).
\end{equation*}
for all $x_1,x_2\in\RR^{d_1}$ and $y_1,y_2\in\RR^{d_2}$. For $x\in\RR^{d_1}$ and $y\in\RR^{d_2}$, we also assume that 
    \begin{equation*}
    \|f(x,y)\|+\|g(x,y)\|\leq \cc_1(1+\|x\|+\|y\|).
\end{equation*}
\end{assumption}
The next assumption is that $M_{k+1}$ and $M'_{k+1}$ are martingale difference sequences.
\begin{assumption}\label{Martingale}
Define the family of $\sigma$-fields $\FF_k=\sigma(x_0,y_0,M_i,M'_i, i\leq k)$. Then $\{M_{k+1}\}$ and $\{M'_{k+1}\}$ are martingale difference sequences with respect to $\FF_k$, i.e., $\EE[M_{k+1}\mid\FF_k]=\EE[M'_{k+1}\mid\FF_k]=0.$
Moreover, 
    $\EE[\|M_{k+1}\|^2+\|M'_{k+1}\|^2\mid\FF_k]\leq \cc_2 (1+\|x_k\|^2+\|y_k\|^2), \;\forall k\geq 0.$
\end{assumption}

We finally make an assumption about the step size sequences, which is necessary to ensure the time-scale separation between the two updates.
\begin{assumption}\label{assu:stepsize}
The step sizes $\alpha_k$ and $\beta_k$ are of the form $$\alpha_k=\frac{\alpha}{(k+K_1)^{\afrak}}\;\;\text{and}\;\;\beta_k=\frac{\beta}{(k+K_1)^{\bfrak}},$$
where $0.5<\afrak<\bfrak<1$, $\alpha,\beta\leq 0.5$ and $K_1\geq 1$. This ensures that $(\alpha_k)$ is non-summable $(\sum_k\alpha_k=\infty)$, square-summable $(\sum_k\alpha^2_k<\infty)$ and non-increasing $(\alpha_{k+1}\leq \alpha_k)$, and analogously for $(\beta_k)$. This also ensures that $\lim_{k\uparrow\infty} \beta_k/\alpha_k=0$. Finally, we make the key assumption that 
$\beta_k^2/\alpha_k^3\leq 1.$
\end{assumption}
We restrict our attention to step sizes of this form for simplicity but our results can be extended to other step size sequences. We do not include $\Theta(1/k)$ in our set of step sizes because it is not optimal for non-expansive fixed point iterations. The assumptions that the step sizes are non-summable, square summable, and satisfy $\beta_k/\alpha_k$ goes to zero as $k\uparrow\infty$ are standard in stochastic approximation literature \cite{Borkar-book}. The assumption of $\beta_k^2/\alpha_k^3\leq 1$ is the assumption that decides the time-scale separation between the updates. The assumption is key for our analysis as it is the deciding factor in the bound on the mean square error. It is required to show that the iterates are bounded in expectation (Lemma \ref{lemma:bounded-expectation}) which is a key step in the proof. Similar assumptions have also been taken in papers such as \cite{Doan}.

Define $\gamma_1\coloneqq(1-\mu)/(2L^2)$ and $\gamma_2\coloneqq\max\left\{\left(2\afrak/(\mu\alpha)\right)^{1/1-\afrak},\left(2\bfrak/\beta\right)^{1/1-\bfrak}\right\}$. The following is our main result. An outline of its proof is given in Section \ref{sec:outline}, and the complete proof has been presented in Appendix \ref{app:main-proof}.
\begin{theorem}\label{thm:main}
    Suppose that Assumptions \ref{f-contrac}-\ref{assu:stepsize} hold. Let $\{x_k,y_k\}$ be generated by \eqref{iter-main} with $\beta/\alpha\leq \gamma_1$ and $K_1\geq \gamma_2$. Then, 
    \begin{itemize}
        \item  There exist constants $C_1,C_2>0$ such that for all $k\geq 0$,
        $$\EE\left[\|x_k-\xstar(y_k)\|^2\right]\leq \frac{C_1}{(k+1)^\afrak} \;\;\text{and}\;\; \EE\left[\|g(\xstar(y_k),y_k)-y_k\|^2\right]\leq \frac{C_2}{(k+1)^{1-\bfrak}}.$$
    \item Moreover, $\|x_k-\xstar(y_k)\|$ converges to zero, and the iterates $y_k$ converge to the set $\Ystar$ with probability $1$.
    \end{itemize}
\end{theorem}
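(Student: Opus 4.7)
The plan is to decouple the coupled dynamics into two Lyapunov analyses: one for the tracking error $\hat{x}_k := x_k - \xstar(y_k)$ driven by the contractive $f$, and one for the Krasnoselskii--Mann residual $r_k := g(\xstar(y_k),y_k) - y_k$ driven by the non-expansive map $T(y) := g(\xstar(y),y)$. Both analyses generate cross terms involving $\|x_k\|$ and $\|y_k\|$ through Assumptions \ref{Lipschitz}--\ref{Martingale}, so the first step is the a priori bound $\sup_k \EE[\|x_k\|^2 + \|y_k\|^2] < \infty$. This is the content of the promised Lemma \ref{lemma:bounded-expectation}, and is precisely where Assumption \ref{assu:stepsize}'s condition $\beta_k^2/\alpha_k^3 \leq 1$ gets used, through a combined Lyapunov function that tames the inter-scale coupling.

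For the faster scale, I would write $\hat{x}_{k+1} = (1-\alpha_k)\hat{x}_k + \alpha_k(f(x_k,y_k)-f(\xstar(y_k),y_k)) + \alpha_k M_{k+1} + (\xstar(y_k)-\xstar(y_{k+1}))$ and use the contraction bound $\|f(x_k,y_k)-f(\xstar(y_k),y_k)\| \leq \mu\|\hat{x}_k\|$ from Assumption \ref{f-contrac}, together with the Lipschitz estimate $\|\xstar(y_1)-\xstar(y_2)\| \leq L(1-\mu)^{-1}\|y_1-y_2\|$ (a consequence of Assumptions \ref{f-contrac}--\ref{Lipschitz}) and $\EE[\|y_{k+1}-y_k\|^2] = O(\beta_k^2)$ from boundedness. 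A Young-inequality split with weight $\alpha_k$ on the cross term between $(1-\alpha_k)\hat{x}_k$ and $\xstar(y_k)-\xstar(y_{k+1})$ produces the recursion
\begin{equation*}
\EE[\|\hat{x}_{k+1}\|^2] \leq (1-c_1\alpha_k)\EE[\|\hat{x}_k\|^2] + c_2\alpha_k^2 + c_3\,\beta_k^2/\alpha_k.
\end{equation*}
Assumption \ref{assu:stepsize} forces $\beta_k^2/\alpha_k = O(\alpha_k^2)$, and a standard induction yields the first claim $\EE[\|\hat{x}_k\|^2] \leq C_1/(k+1)^{\afrak}$.

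For the slower scale I would fix $y^*\in\Ystar$ (non-empty by Assumption \ref{g-nonexp}), split $g(x_k,y_k) = T(y_k) + \varepsilon_k$ with $\|\varepsilon_k\| \leq L\|\hat{x}_k\|$ by Assumption \ref{Lipschitz}, and exploit the non-expansive identity $2\langle y_k-y^*, T(y_k)-y_k\rangle \leq -\|r_k\|^2$, obtained by squaring $\|T(y_k)-T(y^*)\| \leq \|y_k-y^*\|$. After taking conditional expectation and using Assumption \ref{Martingale}, this yields the Fej\'er-type bound
\begin{equation*}
\EE[\|y_{k+1}-y^*\|^2\mid \FF_k] \leq \|y_k-y^*\|^2 - \beta_k(1-O(\beta_k))\|r_k\|^2 + O(\beta_k)\|\hat{x}_k\|^2 + O(\beta_k^2).
\end{equation*}
Summing expectations and invoking the part-1 rate $\sum_k \beta_k\EE[\|\hat{x}_k\|^2] = O(\sum_k \beta_k\alpha_k) < \infty$ gives $\sum_k \beta_k\EE[\|r_k\|^2] < \infty$.

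The hard part is upgrading this summability into the pointwise rate $\EE[\|r_k\|^2] = O(k^{-(1-\bfrak)})$. For this I would prove a companion near-monotonicity $\EE[\|r_{k+1}\|^2] \leq \EE[\|r_k\|^2] + O(\beta_k^2) + O(\beta_k\EE[\|\hat{x}_k\|^2])$, derived from the identity $r_{k+1} = (T(y_{k+1})-T(y_k)) + (1-\beta_k)r_k - \beta_k(\varepsilon_k+M'_{k+1})$ combined with non-expansiveness of $T$ and the bound $\|y_{k+1}-y_k\| \leq \beta_k\|r_k\| + O(\beta_k)(\|\varepsilon_k\|+\|M'_{k+1}\|)$. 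The near-monotonicity together with the summability pins $\EE[\|r_k\|^2]$ down to the $\beta_k$ scale, delivering $C_2/(k+1)^{1-\bfrak}$. Finally, recasting the Fej\'er recursion as $\EE[\|y_{k+1}-y^*\|^2\mid\FF_k] \leq (1+\beta_k^2)\|y_k-y^*\|^2 + (\text{a.s.-summable})$ and invoking the Robbins--Siegmund theorem yields a.s. convergence of $\|y_k-y^*\|$ for every $y^*\in\Ystar$ together with $\sum_k \beta_k\|r_k\|^2 < \infty$ a.s.; an Opial-type cluster-point argument then forces $d(y_k,\Ystar)\to 0$ a.s., while the same Robbins--Siegmund machinery applied to the $\hat{x}_k$ recursion gives $\|\hat{x}_k\|\to 0$ a.s.
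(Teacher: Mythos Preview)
Your outline for the faster scale and for the Fej\'er-type bound on the slower scale is essentially the paper's argument, and the summability $\sum_k\beta_k\EE[\|r_k\|^2]<\infty$ follows as you indicate. The genuine gap is in the ``hard part'': your claimed near-monotonicity
\[
\EE[\|r_{k+1}\|^2]\leq \EE[\|r_k\|^2]+O(\beta_k^2)+O(\beta_k\,\EE[\|\hat x_k\|^2])
\]
does not follow from the identity you wrote. The obstruction is that $T(y_{k+1})$ depends on $M'_{k+1}$ through $y_{k+1}$, so after using non-expansiveness $\|T(y_{k+1})-T(y_k)\|\le\|y_{k+1}-y_k\|$ and squaring, you are left with a cross term of the form $\beta_k\|r_k\|\cdot\|M'_{k+1}\|$ that neither vanishes in conditional expectation nor can be bounded better than $O(\beta_k)$ via Young's inequality, since $\EE[\|M'_{k+1}\|^2\mid\FF_k]$ is only $O(1)$. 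With an $O(\beta_k)$ increment instead of $O(\beta_k^2)$, the telescoping gives $\EE[\|r_i\|^2]\ge\EE[\|r_k\|^2]-C\sum_{j=i}^{k-1}\beta_j$, and the resulting double sum $\sum_i\beta_i\sum_{j\ge i}\beta_j$ grows like $(\sum_i\beta_i)^2$; you recover no decay for $\EE[\|r_k\|^2]$.

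The paper circumvents exactly this issue by introducing the averaged noise $U_{k+1}=(1-\beta_k)U_k+\beta_kM'_{k+1}$ and the proxy $z_k=y_k-U_k$. The $z_k$ iteration reads $z_{k+1}=z_k+\beta_k(h(z_k)-z_k+e_{k+1})$ with $e_{k+1}=g(x_k,y_k)-h(z_k)$, which is $\FF_k$-measurable; hence the near-monotonicity argument for $p_k=h(z_k)-z_k$ goes through with error $O(\beta_k\EE[\|e_{k+1}\|^2])=O(\beta_k\alpha_k)$, and the double sum $\sum_i\beta_i\sum_{j\ge i}\beta_j\alpha_j$ is finite under Assumption~\ref{assu:stepsize}. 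One then transfers back to $y_k$ via $\EE[\|U_k\|^2]=O(\beta_k)$. This averaging device is the missing ingredient in your plan. Separately, your Robbins--Siegmund/Opial route for the almost-sure statement is a legitimate alternative to the paper's ODE approach and would work in finite dimensions.
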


\subsubsection*{Best Rate and Step size Choice}
The best rate for the algorithm based on the above analysis is $1/(k+1)^{0.25-\epsilon}$ where $\epsilon>0$ can be arbitrarily small. This can be achieved by choosing $\afrak=0.5+(2/3)\epsilon$ and $\bfrak=0.75+\epsilon$. Unlike contractive maps, in the analysis of fixed point iterations with non-expansive mappings, a slowly decreasing step size gives a better rate. Hence, a slowly decreasing $\beta_k$, or equivalently a smaller $\bfrak$, gives a better rate. This, along with the assumptions that $\afrak>0.5$ and $\beta_k^2/\alpha_k^3\leq 1$ (equivalently, $2\bfrak\geq 3\afrak$), dictates the optimal rate. 

We also point out that $\EE\!\left[\|x_k-\xstar(y_k)\|^2\right]$ and 
$\EE\!\left[\|g(\xstar(y_k),y_k)-y_k\|^2\right]$ scale as 
$\mathcal{O}(\alpha_k)$ and $\mathcal{O}(1/(k\beta_k))$, respectively. 
For the given stepsize sequences $\{\alpha_k\}$ and $\{\beta_k\}$, these rates are the best known upper bounds for stochastic approximation with contractive and non-expansive operators, respectively, even in the single-time-scale setting \cite{Zaiwei, Bravo-1}. 
Consequently, when optimizing over all stepsize sequences admissible under Assumption~\ref{assu:stepsize}, the overall rate is fundamentally constrained by the stepsize requirements, and could potentially be improved only if these constraints were relaxed.

The assumptions on $\beta/\alpha$ and $K_1$ are required only for the bounds to hold for all $k\geq 0$. Instead, if we have that $\beta_k/\alpha_k\leq \gamma_1$ for all $k\geq K_0$, then the bounds will hold for all $k\geq \max\{K_0,\gamma_2\}$. The existence of such $K_0$ is guaranteed by Assumption \ref{assu:stepsize}. 

\subsubsection*{Comparison with Rates for Contractive SA}
The mean square error rate of $\mathcal{O}(k^{-1/4+\epsilon})$ is weaker than bounds observed for contractive SA. But such weaker bounds are frequently observed in non-expansive variants of SA. The following table compares bounds for contractive and non-expansive SA under different settings.

\begin{table}[htbp]
\scriptsize
\caption{Comparison of current best bounds for SA schemes under contractive and non-expansive mappings. The Euclidean and arbitrary norms refer to the norms under which the mapping satisfies contractive or non-expansive properties, respectively.}
\begin{center}\label{table:comp}
\begin{tabular}{|p{2.7cm}|p{3.7cm}|p{5.3cm}|}
\hline
\textbf{Setting} & \textbf{Contractive Maps} & \textbf{Non-Expansive Maps} \\
\hline
Single Time-Scale SA\newline (Euclidean norm) & \rule{0pt}{1em}$\EE\left[\|x_k-\xstar\|^2\right]=\mathcal{O}\left(1/k\right)$\newline (Stepsize $\mathcal{O}(1/k)$) \cite{Zaiwei} & $\EE\left[\|f(x_k)-x_k\|^2\right]=\mathcal{O}(\log(k)/k^{1/3})$ \newline (Stepsize $\mathcal{O}(1/k^{2/3})$) \\
\hline
Single Time-Scale SA\newline (arbitrary norm) & \rule{0pt}{1em}$\EE\left[\|x_k-\xstar\|^2\right]=\mathcal{O}(1/k)$\newline (Stepsize $\mathcal{O}(1/k)$) \cite{Zaiwei}  & $\EE\left[\|f(x_k)-x_k\|\right]=\mathcal{O}(\log(k)/k^{1/6})$ \newline (Stepsize $\mathcal{O}(1/k^{2/3})$) \cite{Bravo-1}  \\
\hline
Two-Time-Scale SA & \rule{0pt}{1em}$\EE\left[\|x_k-\xstar(y_k)\|^2\right]=\mathcal{O}(1/k)$\newline $\EE\left[\|y_k-\ystar\|^2\right]=\mathcal{O}(1/k)$\newline (Stepsizes $\mathcal{O}(1/k)$) \cite{Chandak-TTS-Opti} & $\EE\left[\|x_k-\xstar(y_k)\|^2\right]=\mathcal{O}(1/k^{0.5})$\newline $\EE\left[\|g(\xstar(y_k),y_k)-y_k\|^2\right]=\mathcal{O}(1/k^{0.25-\epsilon})$\newline \textbf{[Our result]} \\
\hline
\end{tabular}
\end{center}
\end{table}

While a bound of $\mathcal{O}(1/k)$ is achieved in all the cases of contractive SA listed above, the bound achieved by their non-expansive analogs is much weaker. For a stepsize of the form $\mathcal{O}(1/k^\bfrak)$, the rate achieved in non-expansive SA is usually of the order $\mathcal{O}(1/k^{1-\bfrak})$. As we require square-summability, we need $\bfrak>0.5$. The additional assumptions on the stepsize sequences further weaken the bound in our setting.

\subsection{Projected Variant}\label{subsec:projected}
In this subsection, we consider a variant where the iterate in the faster time-scale is projected to a convex and bounded set at each step. 
\begin{equation}\label{iter-projected}
    \begin{aligned}
    &x_{k+1}=\PP_\XX\big(x_k+\alpha_k(f(x_k,y_k)-x_k+M_{k+1})\big)\\
    &y_{k+1}=y_k+\beta_k(g(x_k,y_k)-y_k+M'_{k+1}).
\end{aligned}
\end{equation}
Here $\XX$ is a convex and bounded set, and $\PP_\XX$ denotes the projection onto set $\XX$, i.e., $\PP_\XX(x)=\argmin_{x'\in\XX} \|x-x'\|$. 

We still assume that the function $f(\cdot,y)$ is contractive for all $y$. But the relevant fixed point is now the fixed point for $\PP_\XX(f(\cdot,y))$, i.e., we now define $\xhat(y)$ as the unique point that satisfies $\xhat(y)=\PP_\XX(f(\xhat(y),y))$. In this case, the function $g(\xhat(\cdot),\cdot)$ is assumed to be non-expansive.
\begin{assumption}\label{g-nonexp-proj}
    The function $g(\xhat(\cdot),\cdot):\RR^{d_2}\mapsto \RR^{d_2}$ is non-expansive, i.e., 
    $$\|g(\xhat(y_1),y_1)-g(\xhat(y_2),y_2)\|\leq \|y_1-y_2\|,$$
for all $y_1,y_2\in\RR^{d_2}$. Let us denote the set of fixed points of $g(\xhat(\cdot),\cdot)$ by $\Yhat$, i.e., $\Yhat=\{y\in\RR^{d_2}\mid g(\xhat(y),y)=y\}$. We assume that the set $\Yhat$ is non-empty.
\end{assumption}

This projected variant is studied as a special case because there exist scenarios where the projection of the faster time-scale updates is why the slower time-scale iteration has a non-expansive mapping (instead of a contractive mapping). Formally, there arise scenarios where the map $g(\xstar(\cdot),\cdot)$ is contractive, but the map $g(\xhat(\cdot),\cdot)$ is not guaranteed to be contractive, and is only non-expansive. We study one such scenario in Subsection \ref{subsec:lagrangian}.

The main theorem for this projected variant is similar to Theorem \ref{thm:main}, and its proof is presented in Appendix \ref{app:projected-proof}.
\begin{theorem}\label{thm:main-projected}
    Suppose that Assumptions \ref{f-contrac}, \ref{g-nonexp-proj} and \ref{Lipschitz}-\ref{assu:stepsize} hold. Let $\{x_k,y_k\}$ be generated by \eqref{iter-projected} with $\beta/\alpha\leq \gamma_1$ and $K_1\geq \gamma_2$. Then, 
        \begin{itemize}
        \item   There exist constants $C_1',C_2'>0$ such that for all $k\geq 0$,
        $$\EE\left[\|x_k-\xhat(y_k)\|^2\right]\leq \frac{C_1'}{(k+1)^\afrak},\;\;\text{and}\;\;\EE\left[\|g(\xhat(y_k),y_k)-y_k\|^2\right]\leq \frac{C_2'}{(k+1)^{1-\bfrak}}.$$
    \item Moreover, $\|x_k-\xhat(y_k)\|$ converges to zero, and the iterates $y_k$ converge to the set $\Yhat$ with probability $1$.
    \end{itemize}
\end{theorem}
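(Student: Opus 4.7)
My plan is to follow the proof of Theorem \ref{thm:main} almost verbatim, with $\xstar$ and $\Ystar$ replaced by $\xhat$ and $\Yhat$ throughout, using the non-expansiveness of $\PP_\XX$ to absorb the projection wherever the contraction-plus-noise argument for the faster time-scale is invoked. The key observation is that $\xhat(y) = \PP_\XX(f(\xhat(y),y)) \in \XX$, so $\PP_\XX$ fixes $\xhat(y)$ and can therefore be inserted ``for free'' when comparing the projected update to the tracked fixed point.

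First, I would establish the basic properties of $\xhat(\cdot)$. Existence and uniqueness follow from the Banach contraction principle applied to $\PP_\XX \circ f(\cdot,y)$: this composition is a $\mu$-contraction in $x$ because $\PP_\XX$ is $1$-Lipschitz and $f(\cdot,y)$ contracts with rate $\mu$ by Assumption \ref{f-contrac}. Lipschitzness of $\xhat$ in $y$ (with essentially the same constant $L/(1-\mu)$ one would obtain for $\xstar$) follows by a standard perturbation argument that again uses the non-expansiveness of $\PP_\XX$. Boundedness of $\{x_k\}$ is automatic since $x_k \in \XX$ for all $k\geq 1$ and $\XX$ is bounded, which considerably simplifies the boundedness-in-expectation step (an analog of Lemma \ref{lemma:bounded-expectation}); consequently the boundedness of $\{y_k\}$ in expectation follows from the standard slower-scale drift estimate without needing to separately control $\|x_k\|$.

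For the faster time-scale, I would write
\begin{equation*}
\|x_{k+1}-\xhat(y_{k+1})\|^2 = \bigl\|\PP_\XX\!\bigl(x_k+\alpha_k(f(x_k,y_k)-x_k+M_{k+1})\bigr)-\PP_\XX(\xhat(y_{k+1}))\bigr\|^2
\end{equation*}
and invoke non-expansiveness of $\PP_\XX$ to drop both projections, reducing the recursion to precisely the contractive-plus-noise drift that appears in the proof of Theorem \ref{thm:main}. Splitting $\xhat(y_{k+1}) = \xhat(y_k)+(\xhat(y_{k+1})-\xhat(y_k))$ and using Young's inequality then produces a cross term of order $\|\xhat(y_{k+1})-\xhat(y_k)\|^2$, which is absorbed via the Lipschitzness of $\xhat$, the bound $\|y_{k+1}-y_k\|=\mathcal{O}(\beta_k)$, and the step-size separation enforced by Assumption \ref{assu:stepsize}. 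The slower-scale iteration in \eqref{iter-projected} is identical in form to the one in \eqref{iter-main}, so I would re-run the stochastic inexact Krasnoselskii--Mann argument with $\xhat$ in place of $\xstar$: co-coercivity of $T_2(y)=y-g(\xhat(y),y)$ (from Assumption \ref{g-nonexp-proj} via Lemma \ref{lemma:relations}) drives the descent on the residual $\|g(\xhat(y_k),y_k)-y_k\|^2$, while the tracking error $\|x_k-\xhat(y_k)\|$ enters only as a perturbation whose size has already been controlled. Combining the two bounds through the assumption $\beta_k^2/\alpha_k^3\leq 1$ yields the claimed $\mathcal{O}(\alpha_k)$ and $\mathcal{O}(1/(k\beta_k))$ rates.

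Almost-sure convergence would then follow from the Robbins--Siegmund supermartingale theorem applied to $\|y_k-y^\dagger\|^2$ for arbitrary $y^\dagger\in\Yhat$ (Fejér monotonicity, a standard consequence of KM-type recursions), together with a Borel--Cantelli argument for $\|x_k-\xhat(y_k)\|\to 0$ that uses the finite-time moment bound and square-summability of $\beta_k$. The main obstacle I anticipate is the faster-scale drift inequality: the unprojected proof leans on the identity $f(\xstar(y),y)=\xstar(y)$ at several algebraic steps, whereas the analogous identity here is $\PP_\XX(f(\xhat(y),y))=\xhat(y)$, which only simplifies once a projection is explicitly introduced on the ``target'' side of the bound. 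Careful bookkeeping of where $\xhat(y)\in\XX$ is used, combined with non-expansiveness of $\PP_\XX$, is what makes every corresponding step of the unprojected proof go through without loss.
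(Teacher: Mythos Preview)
Your high-level plan matches the paper's proof closely, and you correctly identify the one place where the argument does not transfer verbatim: the drift term $\|x_k-\xhat(y_k)+\alpha_k(f(x_k,y_k)-x_k)\|^2$. But your proposed fix---``careful bookkeeping of where $\xhat(y)\in\XX$ is used, combined with non-expansiveness of $\PP_\XX$''---does not close this gap. Once you drop the outer projections via non-expansiveness, you are left with the raw $f(x_k,y_k)$, which need \emph{not} lie in $\XX$, so there is no way to ``re-introduce'' a projection and invoke non-expansiveness a second time to get $\|f(x_k,y_k)-\xhat(y_k)\|\leq\mu\|x_k-\xhat(y_k)\|$. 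The unprojected identity $f(\xstar(y),y)=\xstar(y)$ simply has no analog at this step.

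What the paper actually does is abandon the convexity-of-norm trick used in \eqref{split11-simp} and instead expand the square directly:
\[
\|x_k-\xhat(y_k)+\alpha_k(f(x_k,y_k)-x_k)\|^2=\|x_k-\xhat(y_k)\|^2+\alpha_k^2\|f(x_k,y_k)-x_k\|^2+2\alpha_k\langle x_k-\xhat(y_k),f(x_k,y_k)-x_k\rangle.
\]
The cross term is then controlled using the \emph{variational} (obtuse-angle) property of the projection, $\langle z-\PP_\XX(z),x-\PP_\XX(z)\rangle\leq 0$ for $x\in\XX$, applied with $z=f(\xhat(y_k),y_k)$ and $x=x_k\in\XX$, which yields $\langle f(\xhat(y_k),y_k)-\xhat(y_k),x_k-\xhat(y_k)\rangle\leq 0$. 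Combined with the $\mu'$-strong monotonicity of $x\mapsto x-f(x,y)$ this gives $\langle x_k-\xhat(y_k),f(x_k,y_k)-x_k\rangle\leq -\mu'\|x_k-\xhat(y_k)\|^2$, recovering the $(1-2\mu'\alpha_k)$ contraction at the cost of an extra $\alpha_k^2(1+\|x_k\|^2+\|y_k\|^2)$ term. This is the missing ingredient; non-expansiveness alone is strictly weaker and does not suffice here.

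A secondary difference: for almost-sure convergence the paper does not use Robbins--Siegmund/Borel--Cantelli but the ODE method, invoking projected-dynamical-systems theory to show that $\xhat(y)$ (characterized as the solution to a strongly monotone VI) is a globally exponentially stable equilibrium of the fast ODE, and then Theorem~12.2 of \cite{Borkar-book} for the slow time-scale. Your Robbins--Siegmund route for $y_k$ is plausible, but your Borel--Cantelli sketch for $\|x_k-\xhat(y_k)\|\to 0$ is incomplete: the moment bound $\mathcal{O}((k+1)^{-\afrak})$ with $\afrak<1$ is not summable, so a straight first-moment Borel--Cantelli does not apply.
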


\section{Proof Outline}\label{sec:outline}
In this section, we give a rough sketch for the proof for Theorem \ref{thm:main}. Appendix \ref{app:main-proof} presents the complete proof. The proof takes inspiration from analysis of two-time-scale iterations with contractive maps \cite{Doan} and analysis of inexact Krasnoselskii-Mann (KM) iterations \cite{Davis-splitting, Liang-inexact}, but requires novel analysis to deal with the additional errors of two-time-scale updates in a stochastic setting with non-expansive maps. For simplicity, we define $h(y)=g(\xstar(y),y)$. 

The first step in the proof is to show that $\xstar(y)$ is Lipschitz in $y$. 
\begin{lemma}\label{lemma:xstar-lip}
    Suppose Assumptions \ref{f-contrac} and \ref{Lipschitz} hold. Then $\xstar(y)$, the unique fixed point of $f(\cdot,y)$, satisfies 
    $$\|\xstar(y_1)-\xstar(y_2)\|\leq L_0\|y_1-y_2\|,$$
    for all $y_1,y_2\in\RR^{d_2}$. Here $L_0\coloneqq L/(1-\mu)$ is the Lipschitz parameter for the mapping $\xstar(\cdot):\RR^{d_2}\mapsto\RR^{d_1}$.
\end{lemma}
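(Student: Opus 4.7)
The plan is to combine the two defining properties of $\xstar(y)$: it is the fixed point of $f(\cdot,y)$, and $f$ is both contractive in its first argument (Assumption \ref{f-contrac}) and jointly Lipschitz (Assumption \ref{Lipschitz}). The natural approach is to fix $y_1,y_2\in\RR^{d_2}$, write $\xstar(y_i)=f(\xstar(y_i),y_i)$ for $i=1,2$, and then use a standard add-and-subtract trick on the intermediate point $f(\xstar(y_2),y_1)$.

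Concretely, I would begin the estimate
\begin{equation*}
\|\xstar(y_1)-\xstar(y_2)\|=\|f(\xstar(y_1),y_1)-f(\xstar(y_2),y_2)\|,
\end{equation*}
then insert $\pm f(\xstar(y_2),y_1)$ and apply the triangle inequality. The first piece $\|f(\xstar(y_1),y_1)-f(\xstar(y_2),y_1)\|$ is bounded by $\mu\|\xstar(y_1)-\xstar(y_2)\|$ by the contraction Assumption \ref{f-contrac}, while the second piece $\|f(\xstar(y_2),y_1)-f(\xstar(y_2),y_2)\|$ is bounded by $L\|y_1-y_2\|$ using the joint Lipschitz Assumption \ref{Lipschitz} (dropping the non-negative $\|g(\cdot)-g(\cdot)\|$ term on the left-hand side of that assumption, and noting that the $x$-arguments agree). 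Rearranging and dividing by $1-\mu>0$ yields the claimed bound with $L_0=L/(1-\mu)$.

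The only subtle point is making sure the rearrangement is legitimate, which only requires $\mu<1$; this is guaranteed by Assumption \ref{f-contrac}. There is no real obstacle here, since the lemma is a standard parametric-fixed-point continuity result; essentially an implicit-function-type argument performed directly via the Banach iteration. I would keep the proof to a few lines.
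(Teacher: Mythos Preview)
Your proposal is correct and follows essentially the same argument as the paper: both use the fixed-point identity $\xstar(y_i)=f(\xstar(y_i),y_i)$, insert $\pm f(\xstar(y_2),y_1)$, apply contractiveness in $x$ and Lipschitzness in $y$ to the two resulting pieces, and rearrange using $\mu<1$.
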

This lemma ensures that the target points for the faster iteration $\xstar(y_k)$ move slowly as the iteration $\{y_k\}$ moves slowly. Apart from the noise sequence $M_{k+1}$, ($\xstar(y_{k+1})-\xstar(y_k)$) acts as another `error term' for the faster iteration. The Lipschitz nature of $\xstar(\cdot)$, along with the slow changes in $y_k$, guarantees that this error term is small. 

The next step of the proof is to obtain an intermediate bound for $\EE[\|x_k-\xstar(y_k)\|^2]$ and $\EE[\|y_k-\ystar\|^2]$. Here $\ystar\in\Ystar$ is some fixed point for the mapping $g(\xstar(\cdot),\cdot)$.
\begin{lemma}\label{lemma:inter}
    Suppose the setting of Theorem \ref{thm:main} holds. Then for $\ystar\in\Ystar$, iterates $\{x_k,y_k\}$ satisfy the following bounds.
\begin{align*}
    \EE\left[\|x_{k+1}-\xstar(y_{k+1})\|^2\mid\FF_k\right]&\leq (1-\mu'\alpha_k)\|x_k-\xstar(y_k)\|^2\nonumber\\
    &\;\;+\Gamma_1\alpha_k^2\left(1+\|x_k-\xstar(y_k)\|^2+\|y_k-\ystar\|^2\right),\\
    \EE\left[\|y_{k+1}-\ystar\|^2\mid\FF_k\right]&\leq \|y_k-\ystar\|^2+\mu'\alpha_k\|x_k-\xstar(y_k)\|^2\nonumber\\
    &\;\;+\Gamma_2\alpha_k^2\left(1+\|x_k-\xstar(y_k)\|^2+\|y_k-\ystar\|^2\right)\\
    &\;\;-\beta_k(1-\beta_k)\|y_k-g(x_k,y_k)\|^2.
\end{align*}
Here $\Gamma_1,\Gamma_2$ are some constants and $\mu'=1-\mu$. 
\end{lemma}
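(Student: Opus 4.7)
The plan is to derive the two recursive inequalities separately: the first by tracking the faster-scale error around the moving target $\xstar(y_k)$, and the second by treating the slower-scale update as a stochastic, inexact Krasnoselskii--Mann step anchored at a fixed point $\ystar\in\Ystar$. In both cases I would take conditional expectation with respect to $\FF_k$ to kill the martingale cross terms from $M_{k+1}$ and $M'_{k+1}$, use Assumption \ref{Martingale} to bound their second moments by $\cc_2(1+\|x_k\|^2+\|y_k\|^2)$, and then convert norms of $x_k,y_k$ to norms of the error variables using the Lipschitz property of $\xstar(\cdot)$ (Lemma \ref{lemma:xstar-lip}) together with the identity $\|\xstar(y_k)\|\le L_0\|y_k-\ystar\|+\|\xstar(\ystar)\|$; this is what ultimately produces the factor $1+\EE[\|x_k-\xstar(y_k)\|^2]+\EE[\|y_k-\ystar\|^2]$ on the right-hand sides.

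For the first recursion, the key algebraic step is to decompose $x_{k+1}-\xstar(y_{k+1})=(x_{k+1}-\xstar(y_k))+(\xstar(y_k)-\xstar(y_{k+1}))$ and apply Young's inequality $\|a+b\|^2\le(1+\rho)\|a\|^2+(1+1/\rho)\|b\|^2$ with the calibrated choice $\rho=\mu'\alpha_k$. The update rule gives $x_{k+1}-\xstar(y_k)=(1-\alpha_k)(x_k-\xstar(y_k))+\alpha_k(f(x_k,y_k)-f(\xstar(y_k),y_k))+\alpha_k M_{k+1}$, so Assumption \ref{f-contrac} followed by conditioning yields at most $(1-\mu'\alpha_k)^2\|x_k-\xstar(y_k)\|^2$ plus an $O(\alpha_k^2)$ variance piece, and the product $(1+\mu'\alpha_k)(1-\mu'\alpha_k)^2\le 1-\mu'\alpha_k$ delivers the clean contraction factor in the statement. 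The ``drift'' residual $(1+1/\rho)\|\xstar(y_k)-\xstar(y_{k+1})\|^2$ is controlled by Lemma \ref{lemma:xstar-lip} and the slower update as $O(\beta_k^2/\alpha_k)(1+\|x_k\|^2+\|y_k\|^2)$, and the step-size restriction $\beta_k^2/\alpha_k^3\le 1$ upgrades $\beta_k^2/\alpha_k$ to $O(\alpha_k^2)$, which gives rise to the $\Gamma_1\alpha_k^2$ inflation.

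For the second recursion, I would rewrite the slower update as $y_{k+1}-\ystar=(1-\beta_k)(y_k-\ystar)+\beta_k(g(x_k,y_k)-\ystar)+\beta_k M'_{k+1}$ and apply the convex-combination identity $\|(1-t)u+tv\|^2=(1-t)\|u\|^2+t\|v\|^2-t(1-t)\|u-v\|^2$ with $t=\beta_k\in[0,1]$. After conditioning on $\FF_k$ this produces exactly the negative KM residual $-\beta_k(1-\beta_k)\|y_k-g(x_k,y_k)\|^2$ that appears in the statement. To handle $\beta_k\|g(x_k,y_k)-\ystar\|^2$, I would insert $h(y_k)=g(\xstar(y_k),y_k)$ and apply Young's inequality with a free $\rho$: non-expansiveness of $h$ (Assumption \ref{g-nonexp}) together with the Lipschitz property of $g$ (Assumption \ref{Lipschitz}) yield $(1+\rho)\|y_k-\ystar\|^2+(1+1/\rho)L^2\|x_k-\xstar(y_k)\|^2$. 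Choosing $\rho$ so that $\beta_k(1+1/\rho)L^2=\mu'\alpha_k$—which is possible because $\beta/\alpha\le\gamma_1=(1-\mu)/(2L^2)$—reproduces the exact coefficient $\mu'\alpha_k$ in front of $\|x_k-\xstar(y_k)\|^2$, and the leftover $\beta_k\rho=O(\beta_k^2/\alpha_k)$ is again $O(\alpha_k^2)$ by the step-size assumption, so it is absorbed into $\Gamma_2\alpha_k^2$ alongside the noise variance $\beta_k^2\EE[\|M'_{k+1}\|^2|\FF_k]$.

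The principal obstacle is the tight calibration of the Young's inequality parameter in the second recursion: the coefficient on $\|x_k-\xstar(y_k)\|^2$ must match $\mu'\alpha_k$ \emph{exactly}, since later this term has to be absorbed by the $-\mu'\alpha_k\|x_k-\xstar(y_k)\|^2$ piece coming from the first recursion when the two bounds are combined, while simultaneously the inflation of $\|y_k-\ystar\|^2$ must stay at order $\alpha_k^2$. This double balancing act is precisely what forces both $\beta/\alpha\le\gamma_1$ and $\beta_k^2/\alpha_k^3\le 1$, and explains why this otherwise routine-looking decomposition lemma is the linchpin of the whole analysis rather than a black-box inheritance from the standard contractive two-time-scale template.
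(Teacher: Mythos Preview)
Your proposal is correct and follows essentially the same route as the paper. The only differences are cosmetic: for the fast recursion the paper groups $\alpha_kM_{k+1}$ with the drift term $\xstar(y_k)-\xstar(y_{k+1})$ and expands $\|A+B\|^2=\|A\|^2+\|B\|^2+2\langle A,B\rangle$, bounding the cross term via Young's inequality with parameter $\mu'\alpha_k$, whereas you group $M_{k+1}$ inside $x_{k+1}-\xstar(y_k)$ and apply Young's inequality directly to $\|a+b\|^2$; for the slow recursion the paper keeps $M'_{k+1}$ inside the convex combination (so the noise appears in both the KM residual and the $\|g-\ystar\|^2$ piece and then cancels down to $\beta_k^2\EE[\|M'_{k+1}\|^2]$), and bounds $\|g(x_k,y_k)-\ystar\|^2$ by expanding exactly and applying Young's inequality to the cross term $2L\beta_k\|y_k-\ystar\|\,\|x_k-\xstar(y_k)\|$, splitting the target $\mu'\alpha_k$ as $L^2\beta_k+\tfrac{\mu'\alpha_k}{2}\le\mu'\alpha_k$ via $\beta/\alpha\le\gamma_1$, rather than your single $(1+\rho)/(1+1/\rho)$ calibration. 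Both organizations yield the stated bounds with the same use of $\beta_k^2/\alpha_k^3\le1$ and $\beta/\alpha\le\gamma_1$.
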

 When deriving the bound for the faster time-scale, we encounter an error term which has a dependence of $\beta_k^2/\alpha_k$ on $k$. The assumption that $\beta_k^2\leq \alpha_k^3$ helps us bound this term by $\alpha_k^2$. Since we have the assumption that $\alpha_k$ is square-summable, we get that this error term is summable. Hence the assumptions $\beta_k^2\leq \alpha_k^3$ and square-summability of $\alpha_k$ are key to show that the iterates are bounded in expectation.
\begin{lemma}\label{lemma:bounded-expectation}
    Suppose the setting of Theorem \ref{thm:main} holds. Then,
    \begin{enumerate}[label=\alph*)]
        \item The iterates are almost surely bounded, i.e., $\sup_k (\|x_k\|+\|y_k\|)<\infty$ a.s.
        \item The iterates $x_k$ and $y_k$ are bounded in expectation. Specifically, there exists $\Gamma_3,\kappa_1>0$ such that for all $k\geq 0$, and for $\ystar\in\Ystar$,
    $$\EE\left[\|x_k-\xstar(y_k)\|^2+\|y_k-\ystar\|^2\right]\leq \Gamma_3,$$
    where $\Gamma_3\coloneqq \kappa_1\left(\kappa_1+\|x_0-\xstar(y_0)\|^2+\|y_0-\ystar\|^2\right).$
    \end{enumerate}
\end{lemma}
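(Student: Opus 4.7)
The plan is to add the two recursions from Lemma \ref{lemma:inter} so that the awkward cross term $\mu'\alpha_k\,\EE[\|x_k-\xstar(y_k)\|^2]$, which appears with opposite signs in the two inequalities, exactly cancels. Define the Lyapunov-like quantity
\[
V_k \coloneqq \EE\!\left[\|x_k-\xstar(y_k)\|^2\right] + \EE\!\left[\|y_k-\ystar\|^2\right].
\]
Summing the two bounds in Lemma \ref{lemma:inter} and dropping the non-positive term $-\beta_k(1-\beta_k)\EE[\|y_k-g(x_k,y_k)\|^2]$ on the right-hand side yields, with $\Gamma \coloneqq \Gamma_1+\Gamma_2$,
\[
V_{k+1} \;\leq\; (1+\Gamma\alpha_k^2)\,V_k \;+\; \Gamma\alpha_k^2.
\]
That is the entire content of the first step: the contraction and expansion in $\|x_k-\xstar(y_k)\|^2$ match, and the residual $\|y_k-g(x_k,y_k)\|^2$ is only helpful.

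Next I would convert this into a multiplicative recursion by adding $1$ to both sides, giving
\[
V_{k+1} + 1 \;\leq\; (1+\Gamma\alpha_k^2)\,(V_k + 1).
\]
Iterating from $0$ to $k$ and using $1+u\leq e^u$ gives
\[
V_{k+1} + 1 \;\leq\; (V_0+1)\prod_{i=0}^{k}(1+\Gamma\alpha_i^2) \;\leq\; (V_0+1)\exp\!\Bigl(\Gamma\sum_{i=0}^{\infty}\alpha_i^2\Bigr).
\]
The assumption $\afrak>0.5$ in Assumption \ref{assu:stepsize} makes $\sum_i \alpha_i^2 < \infty$, so the exponential is a finite constant, say $\kappa_1$. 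Taking $\kappa_1\geq 1$ (which is automatic since the exponent is non-negative) and rearranging then gives $V_{k+1} \leq \kappa_1(V_0+1) \leq \kappa_1(\kappa_1 + V_0)$, which is the desired bound with $V_0 = \|x_0-\xstar(y_0)\|^2 + \|y_0-\ystar\|^2$.

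The argument is essentially a Gronwall/Robbins--Siegmund style telescoping, so there is no deep obstacle once Lemma \ref{lemma:inter} is in hand; the only subtlety worth highlighting is \emph{why} we are allowed to combine the recursions this way. The cancellation of the $\mu'\alpha_k$ terms is what makes the bound independent of the (a priori unknown) size of $\|x_k-\xstar(y_k)\|^2$, and it is precisely here that the step-size condition $\beta_k^2/\alpha_k^3 \leq 1$ pays off upstream: in the proof of Lemma \ref{lemma:inter} it is what lets the slow-scale perturbation on the faster iterate be absorbed into an $\mathcal{O}(\alpha_k^2)$ error rather than into an $\mathcal{O}(\beta_k^2/\alpha_k)$ term, so that the sum $\sum_k \alpha_k^2$ on the right-hand side of the combined recursion is finite. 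Once that is noted, the remainder is a one-line Gronwall estimate, and the constants can be tracked explicitly to produce $\kappa_1$ with the stated form.
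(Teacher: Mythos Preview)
Your proposal is correct and follows essentially the same approach as the paper: sum the two recursions from Lemma~\ref{lemma:inter} so the $\mu'\alpha_k$ terms cancel, drop the non-positive $-\beta_k(1-\beta_k)\EE[\|y_k-g(x_k,y_k)\|^2]$ term, and telescope using square-summability of $\alpha_k$. The only cosmetic difference is that you use the ``add $1$'' trick to get a clean multiplicative recursion, whereas the paper expands the product-plus-sum directly; both yield the same constant $\kappa_1 = \exp\bigl((\Gamma_1+\Gamma_2)\sum_i\alpha_i^2\bigr)$ and the same final bound.
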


The above lemma is central to proving that the iterates converge to the set of fixed points w.p.\ 1. For this, we use the ODE approach for the analysis of SA algorithms. A key requirement for the approach is the almost sure boundedness of iterates, which is established in the above lemma using the Robbins-Siegmund Theorem \cite{Robbins-Siegmund}. The bound for $\EE[\|x_k-\xstar(y_k)\|^2]$ is also direct from the above lemma. We get the following recursive relation using Lemmas \ref{lemma:inter} and \ref{lemma:bounded-expectation}.
$$\EE\left[\|x_{k+1}-\xstar(y_{k+1})\|^2\right]\leq (1-\mu'\alpha_k)\EE\left[\|x_k-\xstar(y_k)\|^2\right]+\Gamma_1(1+\Gamma_3)\alpha_k^2.$$
Solving this recursive relation gives us the bound that $\EE\left[\|x_k-\xstar(y_k)\|^2\right]=\mathcal{O}(\alpha_k)$. On analyzing the other bound, we get that the summation of $\beta_i\EE\left[\|y_i-h(y_i)\|^2\right]$ over all $i$ is bounded by a constant. Recall that $h(y)=g(\xstar(y),y)$.
\begin{lemma}\label{lemma:wp1_and_bound}
    Suppose the setting of Theorem \ref{thm:main} holds. Then,
    \begin{enumerate}[label=\alph*)]
        \item $\|x_k-\xstar(y_k)\|$ converges to zero, and $y_k$ converges to the set $\Ystar$ w.p.\ 1.
        \item There exist constants $C_1,\Gamma_4>0$ such that, for all $k\geq 0$,
        $$\EE[\|x_k-\xstar(y_k)\|^2]\leq \frac{C_1}{(k+1)^{\afrak}}\;\;\text{and}\;\;\sum_{i=0}^k\beta_i\EE\left[\|h(y_i)-y_i\|^2\right]\leq \Gamma_4,$$
        where $C_1=\kappa_2\left(1+\|x_0-\xstar(y_0)\|^2+\|y_0-\ystar\|^2\right)$ for some $\kappa_2>0$.
    \end{enumerate}
\end{lemma}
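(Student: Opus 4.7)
The plan is to combine the expectation recursions of Lemma \ref{lemma:inter} with the uniform-in-$k$ bound of Lemma \ref{lemma:bounded-expectation}: the faster-scale rate in part (b) follows from a scalar induction, the slower-scale summability from telescoping, and the almost-sure convergence in part (a) from the ODE method together with a Robbins--Siegmund argument. I begin with part (b). Substituting $\EE[\|x_k - \xstar(y_k)\|^2 + \|y_k - \ystar\|^2] \leq \Gamma_3$ from Lemma \ref{lemma:bounded-expectation} into the faster-scale inequality of Lemma \ref{lemma:inter} gives
\begin{equation*}
\EE[\|x_{k+1} - \xstar(y_{k+1})\|^2] \leq (1 - \mu'\alpha_k)\EE[\|x_k - \xstar(y_k)\|^2] + \Gamma_1(1 + \Gamma_3)\alpha_k^2.
\end{equation*}
A standard induction then yields $\EE[\|x_k - \xstar(y_k)\|^2] \leq C_1/(k+1)^{\afrak}$; the base case requires $C_1 \geq \|x_0 - \xstar(y_0)\|^2$ and the inductive step reduces to verifying $C_1\mu'\alpha_k/(k+1)^{\afrak} \geq C_1\afrak/(k+1)^{\afrak+1} + \Gamma_1(1+\Gamma_3)\alpha_k^2$. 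The condition $K_1 \geq \gamma_2$ ensures the first term on the left dominates the first on the right, leaving a clean constraint on $C_1$ to absorb the step-size-squared term; since $\Gamma_3$ is linear in $\|x_0-\xstar(y_0)\|^2 + \|y_0-\ystar\|^2$, the resulting $C_1$ has the asserted form.

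For the summability bound, I telescope the slower-scale inequality of Lemma \ref{lemma:inter} from $k=0$ to $K$, moving $-\beta_k(1-\beta_k)\EE[\|y_k - g(x_k,y_k)\|^2]$ to the left-hand side:
\begin{equation*}
\sum_{k=0}^K \beta_k(1-\beta_k)\EE[\|y_k - g(x_k,y_k)\|^2] \leq \|y_0 - \ystar\|^2 + \mu'\sum_{k=0}^K \alpha_k\EE[\|x_k - \xstar(y_k)\|^2] + \Gamma_2(1+\Gamma_3)\sum_{k=0}^K\alpha_k^2.
\end{equation*}
The two sums on the right are finite: $\sum\alpha_k^2 < \infty$ since $2\afrak > 1$, and $\sum\alpha_k\EE[\|x_k - \xstar(y_k)\|^2] \leq C_1\alpha\sum 1/(k+1)^{2\afrak} < \infty$ by the first part. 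Using $1-\beta_k \geq 1/2$, this produces a uniform bound on $\sum\beta_k\EE[\|y_k - g(x_k,y_k)\|^2]$. I then apply the triangle inequality together with Lipschitzness of $g$,
\begin{equation*}
\|h(y_k) - y_k\|^2 \leq 2\|y_k - g(x_k,y_k)\|^2 + 2L^2\|x_k - \xstar(y_k)\|^2,
\end{equation*}
and observe that $\sum\beta_k\EE[\|x_k - \xstar(y_k)\|^2] \leq C_1\beta\sum 1/(k+1)^{\afrak + \bfrak} < \infty$ because $\afrak + \bfrak > 1$; this completes the bound on $\sum\beta_k\EE[\|h(y_k) - y_k\|^2]$.

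For part (a), I invoke the ODE method for two-time-scale SA. A conditional version of the coupled inequality in Lemma \ref{lemma:inter} (conditioning rather than taking expectation) fits the Robbins--Siegmund template and yields both almost-sure boundedness of the iterates and the almost-sure summability $\sum\beta_k\|h(y_k)-y_k\|^2 < \infty$. On the faster time-scale, the limit ODE $\dot{x} = f(x,y) - x$ has $\xstar(y)$ as its unique globally asymptotically stable equilibrium for each frozen $y$ (by Assumption \ref{f-contrac}), and the standard two-time-scale tracking argument delivers $\|x_k - \xstar(y_k)\| \to 0$ almost surely. On the slower time-scale, $y_k$ tracks the flow of $\dot{y} = h(y) - y$ with non-expansive $h$, so $\|y - \ystar\|^2$ is non-increasing along the flow for each $\ystar \in \Ystar$. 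The main obstacle is that non-expansiveness does not force convergence to a single point: trajectories may only approach the set $\Ystar$ rather than converge to any specific element of it. I handle this via the stochastic Krasnoselskii--Mann argument, combining the Robbins--Siegmund-derived almost-sure convergence of $\|y_k - \ystar\|$ for each $\ystar \in \Ystar$ (which pins down the full sequence to a single limit set) with the almost-sure residual summability to conclude $y_k \to \Ystar$ almost surely.
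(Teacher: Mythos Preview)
Your proposal is correct and, for part (b), proceeds along essentially the same lines as the paper: substitute the uniform bound of Lemma~\ref{lemma:bounded-expectation} into the recursions of Lemma~\ref{lemma:inter}, solve the fast-scale scalar recursion (you use direct induction, the paper expands the product and cites a corollary from \cite{Zaiwei}---both yield the same $\mathcal{O}(\alpha_k)$ rate), and telescope the slow-scale inequality to get the summability bound. Your conversion from $\|y_k-g(x_k,y_k)\|^2$ to $\|h(y_k)-y_k\|^2$ via Lipschitzness matches the paper.

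Part (a) is where the approaches diverge. The paper argues almost-sure finiteness of iterates from Lemma~\ref{lemma:bounded-expectation} and then invokes the ODE machinery of \cite{Borkar-book} directly: Lemma~8.1 for the fast-scale tracking $x_k\to\xstar(y_k)$, and Theorem~12.2 with its subsequent Remark~3 (which handles the non-expansive case with a set of equilibria) for $y_k\to\Ystar$. Your route is more self-contained: you observe that the conditional forms of the inequalities in Lemma~\ref{lemma:inter} fit the Robbins--Siegmund template, which simultaneously yields almost-sure boundedness, almost-sure convergence of $\|y_k-\ystar\|$ for each $\ystar\in\Ystar$, and almost-sure summability of $\sum\beta_k\|y_k-g(x_k,y_k)\|^2$; you then run an Opial-type argument to conclude convergence to $\Ystar$. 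This actually gives a slightly stronger conclusion (convergence to a single point of $\Ystar$) and avoids the black-box citation, at the cost of needing a second Robbins--Siegmund application to the fast-scale recursion to secure $\sum\alpha_k\|x_k-\xstar(y_k)\|^2<\infty$ almost surely, which you need to pass from $g(x_k,y_k)$ to $h(y_k)$ on the sample-path level. That step is routine given what you have, but make sure to spell it out.
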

The above lemma would be sufficient if we wanted to get a bound on the term $\min_{0\leq i\leq k} \EE\left[\|h(y_i)-y_i\|^2\right]$. But we wish to get a bound on $\EE\left[\|h(y_k)-y_k\|^2\right]$. 

For this, we would like to get a bound on $\EE[\|h(y_i)-y_i\|^2]$ in terms of $\EE[\|h(y_k)-y_k\|^2]$. But the martingale noise $M'_{k+1}$ does not allow us to obtain a useful bound. To solve this, we define an averaged error sequence $U_{k+1}=(1-\beta_k)U_k+\beta_kM'_{k+1}$, with $U_0=0$, and a modified iteration of $z_k=y_k-U_k$. We first show that $\EE[\|y_k-z_k\|^2]$ decays at a sufficiently fast rate, implying that we can analyze $\EE[h(z_k)-z_k\|^2]$ instead of $\EE[\|h(y_k)-y_k\|^2].$ Next, we study the iteration for $z_k$, showing that the error sequence for $z_k$ satisfies a similar boundedness condition for the summation as in Lemma \ref{lemma:wp1_and_bound} b). Finally, we fulfill the purpose of studying $z_k$ instead of $y_k$, by obtaining a bound on $\EE[\|h(z_i)-z_i\|^2]$ in terms of $\EE[\|h(z_k)-z_k\|^2]$.
\begin{lemma}\label{lemma:h-results}
    Suppose the setting of Theorem 2.6 holds. Then, for all $k\geq 0$,
    \begin{enumerate}[label=\alph*)]
        \item $\EE[\|y_k-z_k\|^2]=\EE[\|U_k\|^2]\leq \Gamma_5\beta_k$, for some $\Gamma_5>0$.
        \item $\sum_{i=0}^k\beta_i\EE\left[\|h(z_i)-z_i\|^2\right]\leq \Gamma_6$, for some $\Gamma_6>0$.
        \item For all $i\leq k-1$, there exists $\Gamma_7>0$ such that $$\EE\left[\|h(z_i)-z_i\|^2\right]\geq \EE\left[\|h(z_k)-z_k\|^2\right]-\sum_{j=i}^{k-1}\Gamma_7\beta_j\alpha_j.$$
    \end{enumerate}
\end{lemma}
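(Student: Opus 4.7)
My plan is to treat the three parts sequentially; the first two are essentially bookkeeping, while the crux is part (c), which rests on the co-coercivity of $T_2 = I - h$.

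For part (a), I would proceed by induction with $\Gamma_5 = 2\bar C$, where $\bar C$ is an upper bound on $\EE[\|M'_{k+1}\|^2]$ obtained by combining Assumption \ref{Martingale} with the expectation bound of Lemma \ref{lemma:bounded-expectation}. Squaring the recursion defining $U_{k+1}$, conditioning on $\FF_k$, and using $\EE[M'_{k+1}\mid\FF_k]=0$ gives $\EE[\|U_{k+1}\|^2] \leq (1-\beta_k)^2 \EE[\|U_k\|^2] + \beta_k^2 \bar C$. Since $\beta_k \leq 1/2$, $(1-\beta_k)^2 \leq 1 - \beta_k$, and the inductive step closes provided $(\beta_k - \beta_{k+1})/\beta_k^2 \leq 1/2$; a mean-value bound reduces this to $K_1^{\bfrak - 1} \leq \beta/(2\bfrak)$, which is exactly the content of $K_1 \geq \gamma_2 \geq (2\bfrak/\beta)^{1/(1-\bfrak)}$. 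For part (b), the triangle inequality and non-expansiveness of $h$ give $\|h(z_i) - z_i\| \leq 2\|U_i\| + \|h(y_i) - y_i\|$; squaring, taking expectation, multiplying by $\beta_i$, summing, and invoking part (a) together with square-summability of $\beta_i$ and Lemma \ref{lemma:wp1_and_bound}(b) yields the desired bound.

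Part (c) is the heart of the matter. Rewriting the iteration as $z_{k+1} = z_k + \beta_k(v_k - d_k)$, where $d_k := z_k - h(z_k)$ and $v_k := g(x_k,y_k) - h(z_k)$, the co-coercivity of $T_2 = I - h$ gives
\[
\langle d_{k+1} - d_k,\, z_{k+1} - z_k\rangle \geq \tfrac{1}{2}\|d_{k+1} - d_k\|^2.
\]
Substituting $z_{k+1} - z_k = \beta_k(v_k - d_k)$ and rearranging,
\[
\langle d_{k+1} - d_k,\, d_k\rangle \leq \langle d_{k+1} - d_k,\, v_k\rangle - \frac{1}{2\beta_k}\|d_{k+1} - d_k\|^2.
\]
Plugging this into the identity $\|d_{k+1}\|^2 - \|d_k\|^2 = 2\langle d_{k+1} - d_k, d_k\rangle + \|d_{k+1} - d_k\|^2$ and applying Young's inequality with parameter $1/\beta_k - 1$ to $2\langle d_{k+1} - d_k, v_k\rangle$, all $\|d_{k+1} - d_k\|^2$ contributions cancel and I am left with $r_{k+1}^2 - r_k^2 \leq \beta_k \|v_k\|^2/(1-\beta_k) \leq 2\beta_k\|v_k\|^2$. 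To bound $\EE[\|v_k\|^2]$, I would write $v_k = g(x_k,y_k) - g(\xstar(z_k),z_k)$ and apply $L$-Lipschitzness of $g$, Lemma \ref{lemma:xstar-lip}, part (a), and Lemma \ref{lemma:wp1_and_bound}(b); both $\EE[\|x_k-\xstar(y_k)\|^2]$ and $\EE[\|U_k\|^2]$ are $O(\alpha_k)$ (the latter because $\beta_k \leq (\beta/\alpha)\alpha_k$), so $\EE[\|v_k\|^2] = O(\alpha_k)$. Taking expectation and telescoping from $i$ to $k-1$ produces part (c) with $\Gamma_7$ an explicit constant multiple of the $O(\alpha_k)$ coefficient.

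The main obstacle is precisely the cancellation step in part (c): a direct Cauchy--Schwarz bound on the cross term $\langle d_{k+1}-d_k,\, d_k\rangle$ would yield a $\beta_j\sqrt{\alpha_j}$ summand, which is not tight enough to drive the downstream mean-square rate. The co-coercive inequality is the structural property that produces the negative $\|d_{k+1}-d_k\|^2$ factor needed to absorb the error exactly and extract the tighter $\beta_j \alpha_j$ product asserted in the statement.
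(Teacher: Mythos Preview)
Your proposal is correct and follows essentially the same route as the paper. For part (a) you run an induction where the paper expands $U_k$ as a martingale sum and invokes orthogonality plus a stepsize lemma, but both arguments hinge on the same constraint $K_1\geq (2\bfrak/\beta)^{1/(1-\bfrak)}$; for part (c), your co-coercivity inequality $\langle d_{k+1}-d_k, z_{k+1}-z_k\rangle \geq \tfrac12\|d_{k+1}-d_k\|^2$ is literally the same inequality the paper derives from non-expansiveness (up to the sign flip $d_k=-p_k$), and your Young's-inequality cancellation is exactly the paper's completing-the-square step, yielding the identical bound $\|d_{k+1}\|^2-\|d_k\|^2\leq \tfrac{\beta_k}{1-\beta_k}\|v_k\|^2$.
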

Lemma \ref{lemma:h-results} implies that
$$\EE\left[\|h(z_k)-z_k\|^2\right]\sum_{i=0}^k\beta_i\leq \Gamma_6+\Gamma_7\sum_{i=0}^k\beta_i\sum_{j=i}^{k-1}\beta_j\alpha_j.$$
We finally show that the summation on the right-hand side above is bounded. This requires the assumptions of square-summability of $\alpha_k$ and $\beta_k^2\leq \alpha_k^3$ and completes the proof for Theorem \ref{thm:main}.

\section{Gradient Descent Operators in Slower Time-Scale}\label{sec:gradient-result}
An important example of the iteration in \eqref{iter-main} is one where the slower time-scale (iteration for $y_k$) is a gradient descent iteration in the limit  $x_k\rightarrow \xstar(y_k)$. The following assumption formally states this.
\begin{assumption}\label{assu:G-gradient}
    We assume that there exists differentiable function $J:\RR^{d_2}\mapsto\RR$ such that $g(\xstar(y),y)-y=-\gradJ(y)$. We further assume that $J(\cdot)$ is bounded from below, i.e., there exists $J_{min}\in\RR$ such that $J(y)\geq J_{min}$ for all $y$.
\end{assumption}
Depending on the assumptions on function $J(\cdot)$, different types of convergence rates can be obtained. The function $J(\cdot)$ is smooth in all cases (i.e., $\gradJ(\cdot)$ is Lipschitz).
\subsubsection*{$J(\cdot)$ is strongly convex}In this case, the function $g(\xstar(\cdot),\cdot)$ can be shown to be contractive, and hence we have a two-time-scale iteration where both time-scales have contractive mappings. In this case, the result from \cite{Chandak-TTS-Opti} can directly be applied, to obtain a bound of $\mathcal{O}(1/k)$ on $\EE[\|x_k-\xstar(y_k)\|^2+\|y_k-\ystar\|^2]$. Here $\ystar$ is the unique minimizer for $J(\cdot)$.
\subsubsection*{$J(\cdot)$ is convex} In this case, the function $g(\xstar(\cdot),\cdot)$ is non-expansive. This gives us a two-time-scale iteration, which satisfies the assumptions for Theorem \ref{thm:main}. We can apply our result to get a bound of $\mathcal{O}(1/k^{0.25-\epsilon})$ on $\EE[\|x_k-\xstar(y_k)\|^2+\|\gradJ(y_k)\|^2]$. This case is discussed in more detail in Subsection \ref{subsec:minimax}, when we discuss minimax optimization.
\subsubsection*{$J(\cdot)$ is non-convex} In this case, the function $g(\xstar(\cdot),\cdot)$ is not necessarily non-expansive, and hence Theorem \ref{thm:main} can not be applied. But under Lipschitz assumptions on function $g(\cdot,\cdot)$, the function $J(\cdot)$ is smooth, and hence bounds can be obtained in this case using a proof technique similar to the proof for Theorem \ref{thm:main}. 
Consider the following assumption. 
    \begin{assumption}\label{assu:grad-all}
        We assume that the following statements hold.
        \begin{enumerate}[label=\alph*)]
            \item Function $f(\cdot,y)$ is contractive with parameter $\mu$ (Assumption \ref{f-contrac}).
            \item Function $f$ and $g$ are Lipschitz with parameter $L$ (Assumption \ref{Lipschitz}).
            \item $M_{k+1}$ and $M'_{k+1}$ are martingale difference sequences with bounded variances, i.e., $\EE[\|M_{k+1}\|^2+\|M'_{k+1}\|^2\mid\FF_k]\leq \cc_2$.
            \item The stepsize sequences satisfy $\afrak<\bfrak<1$ and $0.5<\bfrak$. We also have the key assumption that $\beta_k^2\leq \alpha_k^3$ and $\beta_k\alpha_k\leq\beta\alpha/(k+K_1)$.
        \end{enumerate}
    \end{assumption}
    The key difference in assumptions is that we \textbf{no longer require $\alpha_k$ to be square summable}, allowing sequences such as $\alpha_k=\mathcal{O}(1/k^{0.4})$ and $\beta_k=\mathcal{O}(1/k^{0.6})$. Then, we have the following theorem, with its proof presented in Appendix \ref{app:gradient-proof}.  
    \begin{theorem}\label{thm:gradient}
    Suppose that Assumptions \ref{assu:G-gradient} and \ref{assu:grad-all} hold. 
    Then there exist constants $C_3,C_4$ such that if $\alpha\leq C_4$, then for all $k\geq 0$,
  $$\min_{0\leq i\leq k} \EE\left[\|\gradJ(y_i)\|^2\right]\leq \frac{C_3(1+\log(k+1))}{(1+k)^{1-\bfrak}}.$$
    \end{theorem}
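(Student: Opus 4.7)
The plan is to adapt the two-time-scale Lyapunov approach from the proof of Theorem \ref{thm:main}, but to replace the Krasnoselskii-Mann-style argument for the slow iterate (which relied on non-expansiveness of $g(\xstar(\cdot),\cdot)$) with a descent argument for the smooth function $J$. Lemma \ref{lemma:xstar-lip} still applies and tells us that $\xstar$ is $L_0$-Lipschitz, which is what lets me track how the fast-scale target $\xstar(y_k)$ drifts as $y_k$ moves. The essential twist is that I would use a \emph{time-varying} Lyapunov of the form $V_k = J(y_k) + c_0(\beta_k/\alpha_k)\|x_k-\xstar(y_k)\|^2$ rather than the constant-weight Lyapunov of Theorem \ref{thm:main}; this is forced because Assumption \ref{assu:grad-all} no longer makes $\alpha_k$ square-summable.

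First I would establish two per-step inequalities. From $L_J$-smoothness of $J$ together with the decomposition $g(x_k,y_k)-y_k = -\gradJ(y_k) + (g(x_k,y_k)-g(\xstar(y_k),y_k))$, Young's inequality, and the bounded-variance noise assumption, I expect (for $\beta_k$ sufficiently small)
\begin{equation*}
\EE[J(y_{k+1})\mid\FF_k] - J(y_k) \leq -\tfrac{\beta_k}{4}\|\gradJ(y_k)\|^2 + C_1\beta_k\|x_k-\xstar(y_k)\|^2 + C_2\beta_k^2.
\end{equation*}
A fast-scale analysis mirroring Lemma \ref{lemma:inter} (using the bound $\|g(x_k,y_k)-y_k\|^2 \leq 2L^2\|x_k-\xstar(y_k)\|^2 + 2\|\gradJ(y_k)\|^2$, which avoids any $\|y_k\|$-dependence, together with $\beta_k^2 \leq \alpha_k^3$ to tame the cross-scale coupling) yields
\begin{equation*}
\EE[\|x_{k+1}-\xstar(y_{k+1})\|^2\mid\FF_k] \leq \bigl(1-\tfrac{\mu'\alpha_k}{2}\bigr)\|x_k-\xstar(y_k)\|^2 + C_3\alpha_k^2 + C_4\tfrac{\beta_k^2}{\alpha_k}\|\gradJ(y_k)\|^2,
\end{equation*}
with $\mu' = 1-\mu$.

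Combining these with $V_k$ as above, using that $\beta_k/\alpha_k$ is non-increasing and vanishes (so the residual cross terms are absorbed for $\alpha$ small and $K_1$ large, consistent with the hypothesis $\alpha \leq C_4$), I expect a clean Lyapunov descent
\begin{equation*}
\EE[V_{k+1}\mid\FF_k] - V_k \leq -\tfrac{\beta_k}{8}\|\gradJ(y_k)\|^2 + C_5\beta_k^2 + C_6\beta_k\alpha_k,
\end{equation*}
where the key gain is that scaling the fast-scale inequality by $c_{k+1}=c_0\beta_{k+1}/\alpha_{k+1}$ converts its unavoidable $\alpha_k^2$ error into an $\mathcal{O}(\beta_k\alpha_k)$ error rather than an $\mathcal{O}(\alpha_k^2)$ one. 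Telescoping, using $V_{k+1}\geq J_{min}$, the fact that $\sum_{i=0}^k \beta_i^2$ is bounded (from $\bfrak > 1/2$), and $\sum_{i=0}^k \beta_i\alpha_i \leq \beta\alpha\sum_{i=0}^k 1/(i+K_1) \leq C'(1+\log(k+1))$ (from the assumption $\beta_k\alpha_k \leq \beta\alpha/(k+K_1)$), I obtain $\sum_{i=0}^k \beta_i\EE[\|\gradJ(y_i)\|^2] \leq C''(1+\log(k+1))$; dividing by $\sum_{i=0}^k \beta_i = \Omega((k+1)^{1-\bfrak})$ bounds the minimum and finishes the proof. The main obstacle is the time-varying Lyapunov bookkeeping: one must choose $c_0$ simultaneously large enough to make the coefficient of $\|x_k-\xstar(y_k)\|^2$ in the combined descent negative, yet small enough that $c_0(\beta_k/\alpha_k)^2$ does not erode the $-\beta_k/8$ coefficient of $\|\gradJ(y_k)\|^2$, which is exactly what pins down the smallness constraint $\alpha \leq C_4$ in the theorem.
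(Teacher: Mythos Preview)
Your proposal is correct and follows essentially the same route as the paper: both derive the same two per-step inequalities (smooth descent for $J$ and a contractive bound for $\|x_k-\xstar(y_k)\|^2$ with error terms involving $\|\gradJ(y_k)\|^2$), combine them by weighting the fast-scale inequality with a factor $\mathcal{O}(\beta_k/\alpha_k)$, and telescope using $\sum\beta_i\alpha_i=\mathcal{O}(\log k)$ and $\sum\beta_i=\Omega((k+1)^{1-\bfrak})$. The only cosmetic difference is packaging: you build the weight $c_0\beta_k/\alpha_k$ into a time-varying Lyapunov $V_k$ and use monotonicity of $\beta_k/\alpha_k$ before telescoping, whereas the paper substitutes the rearranged fast-scale bound into the $J$-descent and then handles the resulting sum $\sum_i(\beta_i/\alpha_i)\bigl(\EE\|x_i-\xstar(y_i)\|^2-\EE\|x_{i+1}-\xstar(y_{i+1})\|^2\bigr)$ by Abel summation---these are equivalent manipulations.
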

    The optimal rate in this case is obtained at $\afrak=0.4$ and $\bfrak=0.6$, with a bound of $\min_{0\leq i\leq k} \EE\left[\|\gradJ(y_i)\|^2\right]=\mathcal{O}(\log(k)/k^{0.4})$. In this case, we do not have a bound on the gradient at time $k$, but only have a bound on the minimum gradient attained before time $k$. Such non-convex two-time-scale algorithms arise in non-convex minimax optimization and reinforcement learning (actor-critic algorithms). The same upper bound of $\mathcal{O}(\log(k)/k^{0.4})$ was obtained in \cite{Doan-actor-critic, Wu-actor-critic} under different assumptions on the noise sequence, and remains the best-known upper bound for this setting.

\section{Applications}\label{sec:applications}
In this section, we discuss three applications of the framework and results discussed in Section \ref{sec:main}. 
\subsection{Minimax Optimization}\label{subsec:minimax}
Consider the following minimax optimization problem:
$$\min_{y\in\RR^{d_2}}\max_{x\in\XX} H(x,y),$$
where $H:\XX\times \RR^{d_2}\mapsto \RR$ is the objective function. This formulation of finding saddle point solutions to minimax optimization problems arises in several areas, including two-player zero-sum games, generative adversarial networks, and robust statistics \cite{jordan-ttsgda}. We consider the Two-Time-scale Stochastic Gradient Descent Ascent (TTSGDA) algorithm \cite{jordan-ttsgda} for finding saddle points. 
\begin{equation}\label{iter-minimax}
    \begin{aligned}
    &x_{k+1}=\PP_\XX\left(x_k+\alpha_k(\nabla_x H(x_k,y_k)+M_{k+1})\right)\\
    &y_{k+1}=y_k+\beta_k(-\nabla_y H(x_k,y_k)+M'_{k+1}).
\end{aligned}
\end{equation}
Here $M_{k+1}$ and $M'_{k+1}$ denote the noise arising from taking gradient samples at the current iterate. These noise sequences are assumed to be unbiased and have bounded variance, i.e., they satisfy Assumption \ref{Martingale}. 

We work under the setting of smooth strongly concave-convex minimax optimization. This is formally stated in the following assumption.
\begin{assumption}\label{assu:minimax}
    Function $H(x,y)$ is $L-$smooth, $\rho-$strongly concave in $x$ and convex in $y$. The set $\XX\subset \RR^{d_1}$ is a convex set with a bounded diameter. 
\end{assumption}
This assumption is sufficient to ensure that the faster time-scale (update of iterate $x_k$) has a contractive mapping and the slower time-scale (update of iterate $y_k$) has a non-expansive mapping. Define $\Phi(y)=\max_{x\in\RR^{d_1}}H(x,y)$. Then we have the following result, with its proof presented in Appendix \ref{app:applications-proof}.
\begin{corollary}\label{coro:minimax}
    Suppose Assumption \ref{assu:minimax} holds for iterates $\{x_k,y_k\}$ defined in \eqref{iter-minimax}, and let $\alpha_k=\mathcal{O}\left(1/(k+1)^{0.5+0.66\epsilon}\right)$ and $\beta_k=\mathcal{O}\left(1/(k+1)^{0.75+\epsilon}\right)$. Then the iterates $\{x_k,y_k\}$ converge to the set of saddle points of the function $H(x,y)$ a.s., and $\EE\left[\|\nabla \Phi(y_k)\|^2\right]\leq \mathcal{O}(1/k^{1/4-\epsilon}).$ Here $\epsilon$ can be arbitrarily small.
\end{corollary}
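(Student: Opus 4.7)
The plan is to realize \eqref{iter-minimax} as an instance of the projected two-time-scale iteration \eqref{iter-projected} and then invoke Theorem \ref{thm:main-projected}. Define
\[
f(x,y) = x + \eta_1\nabla_x H(x,y), \qquad g(x,y) = y - \eta_2\nabla_y H(x,y)
\]
for constants $\eta_1,\eta_2>0$ to be fixed below, and absorb these constants into the step sizes and the noise (replace $\alpha_k \mapsto \alpha_k/\eta_1$, $M_{k+1}\mapsto \eta_1 M_{k+1}$, and similarly for the $y$ iterate). With this rewrite, \eqref{iter-minimax} becomes \eqref{iter-projected}; the rescaled step sizes still obey Assumption \ref{assu:stepsize} and the rescaled noise still obeys Assumption \ref{Martingale} with a slightly enlarged constant.

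I would first verify Assumption \ref{f-contrac}. Since $H(\cdot,y)$ is $\rho$-strongly concave and $L$-smooth, a direct expansion gives
\[
\|f(x_1,y)-f(x_2,y)\|^2 \leq (1-2\eta_1\rho+\eta_1^2 L^2)\|x_1-x_2\|^2,
\]
so that $\eta_1 = \rho/L^2$ makes $f(\cdot,y)$ a strict contraction with $\mu^2=1-\rho^2/L^2<1$. Convexity of $\XX$ preserves this under projection, and by first-order optimality the unique fixed point $\xhat(y) = \PP_\XX(f(\xhat(y),y))$ is exactly the unique maximizer of $H(\cdot,y)$ over $\XX$. Lipschitzness and linear growth of $f,g$ (Assumption \ref{Lipschitz}) follow from $L$-smoothness of $H$.

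Next I would verify Assumption \ref{g-nonexp-proj}. Set $\Phi(y)=\max_{x\in\XX}H(x,y)=H(\xhat(y),y)$. Uniqueness of $\xhat(y)$ combined with Danskin's theorem yields $\nabla\Phi(y)=\nabla_y H(\xhat(y),y)$, so $g(\xhat(y),y)-y = -\eta_2\nabla\Phi(y)$. Convexity of $H$ in $y$ transfers to convexity of $\Phi$, while a standard implicit-function argument exploiting strong concavity gives $L_\Phi$-smoothness of $\Phi$ with $L_\Phi\leq L+L^2/\rho$. By the Baillon--Haddad theorem $\nabla\Phi$ is $(1/L_\Phi)$-co-coercive, so choosing $\eta_2 = 2/L_\Phi$ produces
\[
\langle T_2(y_1)-T_2(y_2),y_1-y_2\rangle = \eta_2\langle \nabla\Phi(y_1)-\nabla\Phi(y_2),y_1-y_2\rangle \geq \tfrac{1}{2}\|T_2(y_1)-T_2(y_2)\|^2,
\]
which by Lemma \ref{lemma:relations} is equivalent to non-expansiveness of $g(\xhat(\cdot),\cdot)$. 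The chosen step sizes obey $0.5<\afrak<\bfrak<1$ with $2\bfrak=3\afrak$, so $\beta_k^2/\alpha_k^3$ is a constant which is made $\leq 1$ by shrinking $\alpha,\beta$.

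With every hypothesis of Theorem \ref{thm:main-projected} verified, its conclusion gives $\|x_k-\xhat(y_k)\|\to 0$ and $y_k\to\Yhat$ a.s., together with $\EE[\|g(\xhat(y_k),y_k)-y_k\|^2]\leq C_2'/(k+1)^{1-\bfrak}$. Dividing by $\eta_2^2$ and using $1-\bfrak = 0.25-\epsilon$ yields $\EE[\|\nabla\Phi(y_k)\|^2] = \mathcal{O}(1/k^{1/4-\epsilon})$. For the a.s.\ set-convergence claim, any $y^*\in\Yhat$ satisfies $\nabla\Phi(y^*)=0$; convexity of $\Phi$ then makes $y^*$ a global minimizer, so $(\xhat(y^*),y^*)$ is a saddle point of $H$. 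Combined with $\|x_k-\xhat(y_k)\|\to 0$ and continuity of $\xhat$, this gives convergence of $(x_k,y_k)$ to the set of saddle points a.s. The main obstacle I expect is extracting the smoothness constant $L_\Phi$ from $L$-smoothness of $H$ and $\rho$-strong concavity in $x$ cleanly enough that the scaling $\eta_2=2/L_\Phi$ is a fixed, problem-dependent constant; once that is settled the corollary is essentially a translation of Theorem \ref{thm:main-projected}.
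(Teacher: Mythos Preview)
Your proposal is correct. The paper's own proof differs in one respect: it rewrites \eqref{iter-minimax} as the \emph{unprojected} iteration \eqref{iter-main}, works with the unconstrained fixed point $\xstar(y)$ and $\Phi(y)=\max_{x\in\RR^{d_1}}H(x,y)$, cites \cite[Lemma~4.3]{jordan-ttsgda} for the smoothness of $\Phi$, and then invokes Theorem~\ref{thm:main}. Since \eqref{iter-minimax} carries an explicit projection onto the bounded set $\XX$, your route through \eqref{iter-projected}, the constrained maximizer $\xhat(y)$, and Theorem~\ref{thm:main-projected} is arguably the more faithful reduction; your $\Phi$ then becomes $\max_{x\in\XX}H(x,y)$, which is what the slow iterate actually tracks. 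The substantive mechanism---contractivity on the fast scale from strong concavity in $x$, non-expansiveness on the slow scale from convexity and smoothness of $\Phi$ via Baillon--Haddad co-coercivity of $\nabla\Phi$, followed by the relevant main theorem---is identical in both arguments, and the resulting rate is the same.
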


We note that stronger guarantees can be obtained for strongly convex-concave minimax optimization by using techniques such as extra-gradient, proximal best response, or just by considering averaged iterates \cite{minimax-other-1,minimax-other-2}, but to the best of our knowledge, our result is the first last-iterate bound for \textit{vanilla} TTSGDA.

\subsection{Linear Stochastic Approximation}\label{subsec:linear}
Consider the problem of finding solutions for the following set of linear equations:
\begin{equation}\label{system-linear}
    A_{11}x+A_{12}y=b_1\quad\text{and}\quad A_{21}x+A_{22}y=b_2.
\end{equation}
At each time $k$, we obtain unbiased estimates $\tilde{A}_{ij}^{(k+1)}$ and $\tilde{b}_i^{(k+1)}$ of each $A_{ij}$ and $b_i$, $i,j\in\{1,2\}$. This problem is solved using a linear two-time-scale algorithm.
\begin{equation}\label{iter-linear}
    \begin{aligned}
    &x_{k+1}=x_k+\alpha_k\left(\tilde{b}_1^{(k+1)}-\tilde{A}_{11}^{(k+1)}x_k-\tilde{A}_{12}^{(k+1)}y_k\right)\\
    &y_{k+1}=y_k+\beta_k\left(\tilde{b}_2^{(k+1)}-\tilde{A}_{21}^{(k+1)}x_k-\tilde{A}_{22}^{(k+1)}y_k\right).
\end{aligned}
\end{equation}
This algorithm finds use in reinforcement learning \cite{Shaan} and linear SA with Polyak averaging \cite{Konda}. Define matrix $\Delta=A_{22}-A_{21}A_{11}^{-1}A_{12}$. Under the assumption that matrices $-A_{11}$ and $-\Delta$ are Hurwitz, i.e., the real part of all their eigenvalues is negative, iteration \eqref{iter-linear} can be written as two-time-scale SA with both time-scales as fixed point iterations with contractive mappings.

In this paper, we consider the case where the matrix $-A_{11}$ is negative definite and the matrix $-\Delta$ is negative semidefinite. Formally, we make the following assumption.
\begin{assumption}\label{assu:linear}
    Iteration \eqref{iter-linear} satisfies the following.
    \begin{enumerate}[label=\alph*)]
        \item The matrix $-A_{11}$ is negative definite and the matrix $-\Delta$ is (non-zero) negative semidefinite. 
        \item There exists a solution for the system of equations in \eqref{system-linear}.
        \item For $i,j\in\{1,2\}$, $\EE\left[\tilde{A}_{ij}^{(k+1)}\mid\FF_k\right]=A_{ij},\EE\left[\tilde{b}_i^{(k+1)}\mid \FF_k\right]=b_i.$ Moreover, $$\EE\left[\|\tilde{A}_{ij}^{(k+1)}-A_{ij}\|^2\mid\FF_k\right]\leq \cc_3,\quad\text{and}\quad\EE\left[\|\tilde{b}_i^{(k+1)}-b_i\|^2\mid\FF_k\right]\leq \cc_3.$$
    \end{enumerate}
\end{assumption}
Part a) of this assumption ensures that iteration \eqref{iter-linear} satisfies Assumptions \ref{f-contrac} and \ref{g-nonexp}. As Hurwitz matrices also correspond to contractive fixed point iterations \cite{Konda}, we can also work with Hurwitz $-A_{11}$ (instead of negative definite). Here is the result for this setting, with its proof presented in Appendix \ref{app:applications-proof}.
\begin{corollary}\label{coro:linear}
    Suppose Assumption \ref{assu:linear} holds for iterates $\{x_k,y_k\}$ defined in \eqref{iter-linear}, and let $\alpha_k=\mathcal{O}\left(1/(k+1)^{0.5+0.66\epsilon}\right)$ and $\beta_k=\mathcal{O}\left(1/(k+1)^{0.75+\epsilon}\right)$. Then the iterates $\{x_k,y_k\}$ converge to the set of solutions of system of equations \eqref{system-linear} a.s., and 
    $$\EE\left[\|x_k-\xstar(y_k)\|^2\right]\leq\mathcal{O}\left(\frac{1}{k^{0.5}}\right)\;\;\text{and}\;\;\EE\left[\|A_{21}x_k+A_{22}y_k-b_2\|^2\right]\leq \mathcal{O}\left(\frac{1}{k^{0.25-\epsilon}}\right).$$ Here $\epsilon$ can be arbitrarily small, and $\xstar(y)=A_{11}^{-1}(b_1-A_{12}y)$.
\end{corollary}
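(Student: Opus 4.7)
The strategy is to cast iteration \eqref{iter-linear} as an instance of the template \eqref{iter-main} and invoke Theorem \ref{thm:main}. Introduce a small scaling constant $\eta > 0$ (to be absorbed into the step-size constants $\alpha,\beta$) and define
\[
f(x,y) = x - \eta(A_{11}x + A_{12}y - b_1), \qquad g(x,y) = y - \eta(A_{21}x + A_{22}y - b_2).
\]
With $\bar{\alpha}_k = \alpha_k/\eta$, $\bar{\beta}_k = \beta_k/\eta$, and noises $M_{k+1}, M'_{k+1}$ obtained by centering the estimates $\tilde{A}_{ij}^{(k+1)}$ and $\tilde{b}_i^{(k+1)}$, iteration \eqref{iter-linear} fits the form \eqref{iter-main}. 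Assumption \ref{Lipschitz} is immediate since $f,g$ are affine; Assumption \ref{Martingale} follows from part (c) of Assumption \ref{assu:linear} by expanding $\tilde{A}_{ij}^{(k+1)} x_k - A_{ij} x_k = (\tilde{A}_{ij}^{(k+1)} - A_{ij}) x_k$ and applying Cauchy--Schwarz to get a conditional second-moment bound of order $\mathcal{O}(1+\|x_k\|^2+\|y_k\|^2)$; and Assumption \ref{assu:stepsize} is preserved under the rescaling, since the exponents $\afrak,\bfrak$ are unchanged.

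For Assumption \ref{f-contrac}, one has $f(x_1,y) - f(x_2,y) = (I - \eta A_{11})(x_1-x_2)$, and since $-A_{11}$ is negative definite, for sufficiently small $\eta$, $I-\eta A_{11}$ is a strict contraction in an inner-product norm obtained from a Lyapunov equation (equivalent to the Euclidean norm). Solving $f(\xstar(y),y) = \xstar(y)$ yields $\xstar(y) = A_{11}^{-1}(b_1 - A_{12}y)$, and a direct substitution gives
\[
g(\xstar(y),y) - y = -\eta\bigl(\Delta y + A_{21}A_{11}^{-1}b_1 - b_2\bigr),
\]
so $g(\xstar(\cdot),\cdot)$ is affine with linear part $I - \eta\Delta$. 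The same Lyapunov-type argument, applied with $-\Delta$ negative semidefinite, yields non-expansiveness of $I - \eta\Delta$ for sufficiently small $\eta$, establishing Assumption \ref{g-nonexp}. The fixed-point set $\Ystar = \{y : \Delta y = b_2 - A_{21}A_{11}^{-1}b_1\}$ is non-empty by part (b).

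Theorem \ref{thm:main} now gives almost-sure convergence of $\|x_k - \xstar(y_k)\|$ to $0$ and of $y_k$ to $\Ystar$, hence of $(x_k,y_k)$ to the solution set of \eqref{system-linear}. The finite-time bounds specialize to $\EE[\|x_k - \xstar(y_k)\|^2] = \mathcal{O}(1/k^{0.5})$ and $\EE[\|g(\xstar(y_k),y_k) - y_k\|^2] = \mathcal{O}(1/k^{0.25-\epsilon})$ for the stated step-size exponents. The second stated bound of the corollary follows from the decomposition
\[
A_{21}x_k + A_{22}y_k - b_2 = A_{21}(x_k - \xstar(y_k)) - \eta^{-1}\bigl(g(\xstar(y_k),y_k) - y_k\bigr),
\]
the triangle inequality, and the fact that the second term on the right dominates at the slower rate $\mathcal{O}(1/k^{0.25-\epsilon})$.

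The main technical point is bridging the spectral hypotheses on $A_{11}$ and $\Delta$ with the Euclidean-norm contractivity/non-expansiveness hypotheses of Theorem \ref{thm:main}: for non-symmetric matrices this requires a Lyapunov change of variable. Since such a change is a bi-Lipschitz equivalence on the finite-dimensional space, the decay exponents transfer without loss and only multiplicative constants are affected; one also has to choose $\eta$ small enough that the three conditions (contractivity of $I-\eta A_{11}$, non-expansiveness of $I-\eta \Delta$, and the ratio condition $\beta/\alpha \leq \gamma_1$) hold simultaneously, which is always possible.
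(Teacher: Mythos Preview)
Your approach is essentially the same as the paper's: rewrite \eqref{iter-linear} in the template \eqref{iter-main} via a linear rescaling and apply Theorem \ref{thm:main}. The paper uses two distinct scaling constants, $\lambda_{A_{11},\min}/\lambda_{A_{11},\max}^2$ for $f$ and $2/\lambda_{\Delta,\max}$ for $g$, and verifies Assumptions \ref{f-contrac} and \ref{g-nonexp} directly in the Euclidean norm by invoking Lemma \ref{lemma:relations}: positive-definite $A_{11}$ is strongly monotone, and symmetric PSD $\Delta$ is $(1/\lambda_{\Delta,\max})$-co-coercive, so the associated maps are contractive and non-expansive respectively. Your single common $\eta$ is a harmless simplification, and your final decomposition for $\EE[\|A_{21}x_k+A_{22}y_k-b_2\|^2]$ is a detail the paper leaves implicit.

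The one point to flag is your Lyapunov detour. Under the paper's assumption, ``negative (semi)definite'' is the symmetric notion (as confirmed by the paper's own proof, which uses eigenvalues and co-coercivity), so no change of inner product is needed: $I-\eta A_{11}$ and $I-\eta\Delta$ are already contractive/non-expansive in the standard Euclidean norm for small $\eta$. More importantly, your claim that ``the same Lyapunov-type argument'' yields non-expansiveness of $I-\eta\Delta$ for merely semidefinite $\Delta$ does not extend to non-symmetric matrices. For instance, if $\Delta$ is skew-symmetric then $-\Delta$ has zero symmetric part (hence is ``negative semidefinite'' in the quadratic-form sense), yet $I-\eta\Delta$ has spectral radius $\sqrt{1+\eta^2\lambda^2}>1$ for any nonzero eigenvalue $i\lambda$ of $\Delta$, so it is non-expansive in no norm whatsoever. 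Thus your last paragraph overreaches: the Lyapunov change of variable rescues contractivity for Hurwitz $A_{11}$, but there is no analogous rescue for semidefinite $\Delta$ beyond the symmetric case. Since the paper's hypotheses already put you in the symmetric case, simply drop the Lyapunov argument and argue directly in the Euclidean norm.
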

We note that the linear non-expansive map can also be treated as a linear contractive map under the seminorm $\|x\|_\Delta \coloneqq \sqrt{x^\top \Delta x}$, which vanishes on $\ker(\Delta)$. We believe that stronger guarantees can be obtained for iteration~\eqref{iter-linear} using existing analyses for seminorm-contractive stochastic approximation \cite{seminorm}, although such analyses are currently available only for the single-time-scale setting and would need to be extended to the two-time-scale case. We include this application to show the broad applicability of our framework.

\subsection{Constrained Optimization with Lagrangian Multipliers}\label{subsec:lagrangian}
Consider the following constrained optimization problem for $x\in\RR^{d_1}$:
\begin{align}
    &\text{maximize} \;\;H_0(x)\nonumber\\
    &\text{subject to:}\nonumber\\
    &\;\;\;\;\;H_i(x)\leq0, i=1,\ldots,m\nonumber\\
    &\;\;\;\;\;Ax=b_0.
\end{align}
Here the constraints $H_i(x)\leq 0$ together represent the set $\XX$, i.e., $\XX=\{x\mid H_i(x)\leq 0, i=1,\ldots,m\}.$ The equation $Ax=b_0$ for $A\in\RR^{d_2\times d_1}$ and $b_0\in\RR^{d_2}$ represents the additional linear constraint. This problem can be solved using Lagrange multipliers, specifically, a two-time-scale Lagrangian optimization method \cite{two-time-lagrangian}.

Consider the following iteration.
\begin{equation}\label{iter-Lagrangian}
    \begin{aligned}
    &x_{k+1}=\PP_\XX\big(x_k+\alpha_k(\nabla H_0(x_k)-A^T\lambda_k+M_{k+1})\big)\\
    &\lambda_{k+1}=\lambda_k+\beta_k(Ax_k-b_0).
\end{aligned}
\end{equation}
Here $\lambda_k\in\RR^{d_2}$ denotes the Lagrange multiplier. $M_{k+1}$ denotes the noise arising from taking gradient samples at the current value of $x_k$, and satisfies Assumption \ref{Martingale}. 

This problem can be viewed as a special case of strongly convex--concave minimax optimization. We treat it separately due to the interesting role played by projections (Lemma~\ref{lemma:Lagrangian}), as well as its practical applications. The algorithm is particularly useful in distributed settings, where each node or agent updates some part of the variable $x$ under some local convex constraints, and there is a global linear constraint. In that case, the local updates only require knowledge of the gradient, the Lagrange multiplier, and their local constraints. Another similar setting in which a similar algorithm can be used is generalized Nash equilibrium problems (or Nash equilibrium under constraints) \cite{GNEP} and game control for strongly monotone games \cite{Chandak}. In this setting, each player takes action in a convex set and updates their action using gradient ascent. The game controller has a global linear objective which they want the player's actions to satisfy \cite{Chandak}. 

We make the following assumption. The second part of the assumption is required for the existence of $\lambda$ which satisfies $A\xhat(\lambda)=b_0$.
\begin{assumption}\label{assu:Lagrangian}
    Iteration \eqref{iter-Lagrangian} satisfies the following statements.
    \begin{enumerate}[label=\alph*)]
        \item Function $H_0(x)$ is $\rho-$strongly concave and $L$-smooth.
        \item Functions $H_i(x)$ are continuously differentiable and convex. Moreover, there exists $x\in\text{int}(\XX)$ (i.e., $H_i(x)<0, i=1,\ldots,m$) such that $Ax=b_0$.
    \end{enumerate}
\end{assumption}

This problem is an example of a setting where the projection in the faster time-scale gives rise to the non-expansiveness in the slower time-scale. Recall the definitions of $\xstar(y)$ and $\xhat(y)$. In this case, $\xstar(\lambda)$ is the unique solution to $\nabla H_0(x)-A^T\lambda=0$ and $\xhat(\lambda)$ is the unique solution to $x=\PP_\XX(x+\nabla H_0(x_k)-A^T\lambda)$. Then we have the following lemma.
\begin{lemma}\label{lemma:Lagrangian}
    Under Assumption \ref{assu:Lagrangian} and the assumption that the matrix $A$ has full row rank:
    \begin{enumerate}[label=\alph*)]
        \item $-A\xstar(\lambda)$ is strongly monotone.
        \item $-A\xhat(\lambda)$ is co-coercive.
    \end{enumerate}
\end{lemma}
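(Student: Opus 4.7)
The plan is to exploit the first-order characterizations of $\xstar(\lambda)$ and $\xhat(\lambda)$ and combine them with $\rho$-strong concavity and $L$-smoothness of $H_0$. Because $\xstar(\lambda)$ is the unconstrained maximizer of $H_0(x) - \langle \lambda, Ax\rangle$ over $\RR^{d_1}$, it is characterized by the equality $\nabla H_0(\xstar(\lambda)) = A^T\lambda$; by contrast, $\xhat(\lambda)$ is only characterized by the variational inequality $\langle \nabla H_0(\xhat(\lambda)) - A^T\lambda,\, y - \xhat(\lambda)\rangle \leq 0$ for every $y\in\XX$, obtained by unwinding the projection fixed-point equation. This asymmetry is exactly what produces the gap between strong monotonicity in (a) and mere co-coercivity in (b).

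For part (a), I would subtract the two gradient equations at $\lambda_1$ and $\lambda_2$, pair the result with $\xstar(\lambda_1) - \xstar(\lambda_2)$, and use strong concavity to obtain $\langle -A\xstar(\lambda_1)+A\xstar(\lambda_2),\, \lambda_1-\lambda_2\rangle \geq \rho \|\xstar(\lambda_1)-\xstar(\lambda_2)\|^2$. To upgrade this to a lower bound in $\|\lambda_1-\lambda_2\|^2$, I would combine $L$-smoothness (giving $\|A^T(\lambda_1-\lambda_2)\| \leq L\|\xstar(\lambda_1)-\xstar(\lambda_2)\|$) with the full-row-rank bound $\|A^T v\| \geq \sigma_{\min}(A)\|v\|$, yielding strong monotonicity of $-A\xstar(\cdot)$ with constant $\rho\sigma_{\min}(A)^2/L^2 > 0$.

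For part (b), the gradient equation is no longer available, so I would instead apply the variational inequality at $\lambda_1$ with test point $\xhat(\lambda_2)\in\XX$ and at $\lambda_2$ with test point $\xhat(\lambda_1)\in\XX$, and add the two inequalities so that the projection-induced terms cancel. What remains is $\langle \nabla H_0(\xhat(\lambda_1)) - \nabla H_0(\xhat(\lambda_2)),\, \xhat(\lambda_1)-\xhat(\lambda_2)\rangle \geq \langle A^T(\lambda_1-\lambda_2),\, \xhat(\lambda_1)-\xhat(\lambda_2)\rangle$; strong concavity controls the left side and rearranging gives $\langle -A\xhat(\lambda_1)+A\xhat(\lambda_2),\, \lambda_1-\lambda_2\rangle \geq \rho\|\xhat(\lambda_1)-\xhat(\lambda_2)\|^2$. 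The routine estimate $\|A\xhat(\lambda_1)-A\xhat(\lambda_2)\|^2 \leq \|A\|^2 \|\xhat(\lambda_1)-\xhat(\lambda_2)\|^2$ then delivers co-coercivity of $-A\xhat(\cdot)$ with parameter $\rho/\|A\|^2$.

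The main conceptual obstacle is the one that forces the split into the two cases: when $\PP_\XX$ is active one cannot expect a lower bound of the form $\|\xhat(\lambda_1)-\xhat(\lambda_2)\| \geq c\|\lambda_1-\lambda_2\|$, since distinct $\lambda$'s can produce the same $\xhat(\lambda)$ on the boundary of $\XX$. This rules out converting the bound in $\|\xhat(\lambda_1)-\xhat(\lambda_2)\|^2$ into one in $\|\lambda_1-\lambda_2\|^2$, and is precisely why $-A\xhat(\cdot)$ is only co-coercive rather than strongly monotone, mirroring the gap between Assumptions \ref{g-nonexp} and \ref{g-nonexp-proj}.
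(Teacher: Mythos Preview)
Your proposal is correct and follows essentially the same route as the paper's proof: both parts hinge on the first-order characterizations (the gradient equation $\nabla H_0(\xstar(\lambda))=A^T\lambda$ for (a), and the variational inequality for (b)), combined with $\rho$-strong concavity; the only cosmetic difference is that you phrase the full-row-rank bound via $\sigma_{\min}(A)$ while the paper writes it via the pseudoinverse norm $\|(AA^T)^{-1}A\|$, which are reciprocals of one another. Your additional paragraph explaining why the $\|\xhat(\lambda_1)-\xhat(\lambda_2)\|^2$ bound cannot be upgraded to $\|\lambda_1-\lambda_2\|^2$ in the projected case is a nice conceptual complement that the paper leaves implicit.
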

The above lemma implies that in the absence of the projection onto set $\XX$, iteration \eqref{iter-Lagrangian} would have contractive mappings in both time-scales. On the other hand, the slower time-scale has a non-expansive mapping in the presence of the projection. The following is our main result for this setting.
\begin{corollary}\label{coro:Lagrangian}
    Suppose Assumption \ref{assu:Lagrangian} holds for iterates $\{x_k,\lambda_k\}$ defined in \eqref{iter-Lagrangian}, and let $\alpha_k=\mathcal{O}\left(1/(k+1)^{0.5+0.66\epsilon}\right)$ and $\beta_k=\mathcal{O}\left(1/(k+1)^{0.75+\epsilon}\right)$. Then $x_k$ converges to $\xhat(\lambda_k)$ and $Ax_k$ converges to $b_0$ a.s., and $$\EE\left[\|x_k-\xhat(y_k)\|^2\right]\leq\mathcal{O}\left(\frac{1}{k^{0.5}}\right)\;\;\text{and}\;\;\EE\left[\|Ax_k-b_0\|^2\right]\leq \mathcal{O}\left(\frac{1}{k^{0.25-\epsilon}}\right).$$ Here $\epsilon$ can be arbitrarily small.
\end{corollary}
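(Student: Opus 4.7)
The plan is to recognize iteration \eqref{iter-Lagrangian} as a special case of the projected two-time-scale scheme \eqref{iter-projected} and invoke Theorem \ref{thm:main-projected}. Setting $y_k = \lambda_k$ and $M'_{k+1} = 0$, we identify (up to a small constant scaling that can be absorbed into the hidden constants of $\alpha_k$) $f(x,\lambda) = x + \nabla H_0(x) - A^T \lambda$ and $g(x,\lambda) = \lambda + Ax - b_0$. The associated operators are $T_1(x,\lambda) = x - f(x,\lambda) = -\nabla H_0(x) + A^T\lambda$ and $T_2(\lambda) = \lambda - g(\xhat(\lambda),\lambda) = b_0 - A\xhat(\lambda)$.

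First I would verify the structural hypotheses of Theorem \ref{thm:main-projected}. Strong concavity of $H_0$ in $x$ combined with $L$-smoothness makes $T_1(\cdot,\lambda)$ strongly monotone, which by Lemma \ref{lemma:relations} is equivalent to contractivity of $f(\cdot,\lambda)$ in $x$ (Assumption \ref{f-contrac}); a small prefactor may be needed to push the contraction constant below $1$, and this can be folded into $\alpha_k$. Non-expansivity of $g(\xhat(\cdot),\cdot)$ (Assumption \ref{g-nonexp-proj}) is exactly Lemma \ref{lemma:Lagrangian}(b) reread through Lemma \ref{lemma:relations}: co-coercivity of $-A\xhat(\cdot)$ gives co-coercivity of $T_2$, which is equivalent to non-expansivity of $g(\xhat(\cdot),\cdot)$. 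The Lipschitz condition (Assumption \ref{Lipschitz}) follows from $L$-smoothness of $H_0$ and linearity of the $A^T\lambda$, $Ax$, $b_0$ terms. The noise condition (Assumption \ref{Martingale}) is imposed on $M_{k+1}$ directly. Finally, the chosen exponents $\afrak = 0.5 + 0.66\epsilon$, $\bfrak = 0.75 + \epsilon$ satisfy Assumption \ref{assu:stepsize}, since $0.5 < \afrak < \bfrak < 1$ and $2\bfrak = 1.5 + 2\epsilon \geq 1.5 + 1.98\epsilon = 3\afrak$.

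The main obstacle is showing $\Yhat \neq \emptyset$. A point $\lambda^{*} \in \Yhat$ satisfies $A\xhat(\lambda^{*}) = b_0$, i.e., $\xhat(\lambda^{*})$, the unique maximizer over $\XX$ of the Lagrangian $H_0(x) - \langle \lambda^{*}, Ax - b_0 \rangle$, also satisfies the linear constraint. Its existence follows from the Slater-type condition in Assumption \ref{assu:Lagrangian}(b): because there exists $x \in \mathrm{int}(\XX)$ with $Ax = b_0$, strong duality holds for the concave program $\max_{x \in \XX,\, Ax = b_0} H_0(x)$, yielding a Lagrangian saddle-point whose dual component lies in $\Yhat$. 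The full row rank of $A$ used in Lemma \ref{lemma:Lagrangian} ensures uniqueness of this dual component but is not essential for nonemptiness.

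With all hypotheses verified, Theorem \ref{thm:main-projected} yields almost sure convergence of $x_k$ to $\xhat(\lambda_k)$ and of $\lambda_k$ to $\Yhat$, together with $\EE[\|x_k - \xhat(\lambda_k)\|^2] = \mathcal{O}(1/k^{1/2})$ and $\EE[\|g(\xhat(\lambda_k),\lambda_k) - \lambda_k\|^2] = \EE[\|A\xhat(\lambda_k) - b_0\|^2] = \mathcal{O}(1/k^{1/4-\epsilon})$. The target bound on $\EE[\|Ax_k - b_0\|^2]$ then follows from the triangle inequality $\|Ax_k - b_0\|^2 \leq 2\|A\|^2 \|x_k - \xhat(\lambda_k)\|^2 + 2\|A\xhat(\lambda_k) - b_0\|^2$, where the slower of the two rates dominates. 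Almost sure convergence of $Ax_k$ to $b_0$ then follows by combining $\|x_k - \xhat(\lambda_k)\| \to 0$ a.s.\ with $\lambda_k \to \Yhat$ a.s.\ and continuity of $\xhat(\cdot)$ (an analogue of Lemma \ref{lemma:xstar-lip} adapted to the projected fixed point).
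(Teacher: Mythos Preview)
Your proposal is correct and follows essentially the same route as the paper: recast \eqref{iter-Lagrangian} in the form \eqref{iter-projected}, verify the structural hypotheses via Lemma \ref{lemma:Lagrangian}(b) and Lemma \ref{lemma:relations}, establish $\Yhat\neq\emptyset$ through the Slater condition in Assumption \ref{assu:Lagrangian}(b), and invoke Theorem \ref{thm:main-projected}. One small point to tighten: the constant rescaling is needed not only in $f$ (to get a contraction factor below $1$) but also in $g$, since Lemma \ref{lemma:relations}(b) makes $\lambda\mapsto\lambda-2\cc\,T_2(\lambda)$ non-expansive only with the factor $2\cc=2\rho/\|A\|^2$; the paper handles this explicitly by writing $g(x,\lambda)=\lambda+(2\rho/\|A\|^2)(b_0-Ax)$ and absorbing the factor into $\beta_k'$. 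Your final triangle-inequality step passing from $\EE[\|A\xhat(\lambda_k)-b_0\|^2]$ to $\EE[\|Ax_k-b_0\|^2]$ is a useful clarification that the paper leaves implicit.
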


\section{Numerical Experiments}\label{sec:experiments}
In this section, we simulate the three applications from the previous section. Each plot represents the average over 200 independent runs of the algorithm. The shaded region indicates one standard deviation around the mean. The stepsize sequences are of the form $\alpha_k=\alpha/(k+100)^{\afrak}$ and $\beta_k=\beta/(k+100)^{\bfrak}$. 

In Figure \ref{fig:linear}, we simulate the linear stochastic approximation setting. We generate a random positive definite matrix $A_{11}\in\RR^{20\times 20}$ and generate a positive semi-definite matrix $\Delta\in\RR^{20\times 20}$ of rank $5$. We set $A_{12}=A_{21}=I$ and $A_{22}=A_{11}^{-1}+\Delta$. The vectors $b_1, b_2\in\RR^{20}$ are obtained by randomly generating $x',y'\in\RR^{20}$ and computing $b_1=A_{11}x'+A_{12}y'$ and $b_2=A_{21}x'+A_{22}y'$. At each timestep, i.i.d.\ normal noise is added to the vectors and the matrices, and the noisy residual error is used for updating the iterates $x_k$ and $y_k$. 

In Figure \ref{fig:linear_1}, we study the effect of the exponent in the stepsize (parameters $\afrak$ and $\bfrak$) on the residual error. We see that a slowly decreasing stepsize has a better rate of convergence. This matches our intuition and the bound obtained in Theorem \ref{thm:main}. In Figure \ref{fig:linear_2}, we study the effect of constants $\alpha,\beta$ on the residual error. As before, larger stepsizes accelerate convergence. Interestingly, the residual error appears to be primarily influenced by the value of $\beta$, with minimal sensitivity to $\alpha$.

\begin{figure}[htbp]
    \centering
    \begin{subfigure}[b]{0.495\textwidth}
        \centering
        \includegraphics[width=\linewidth]{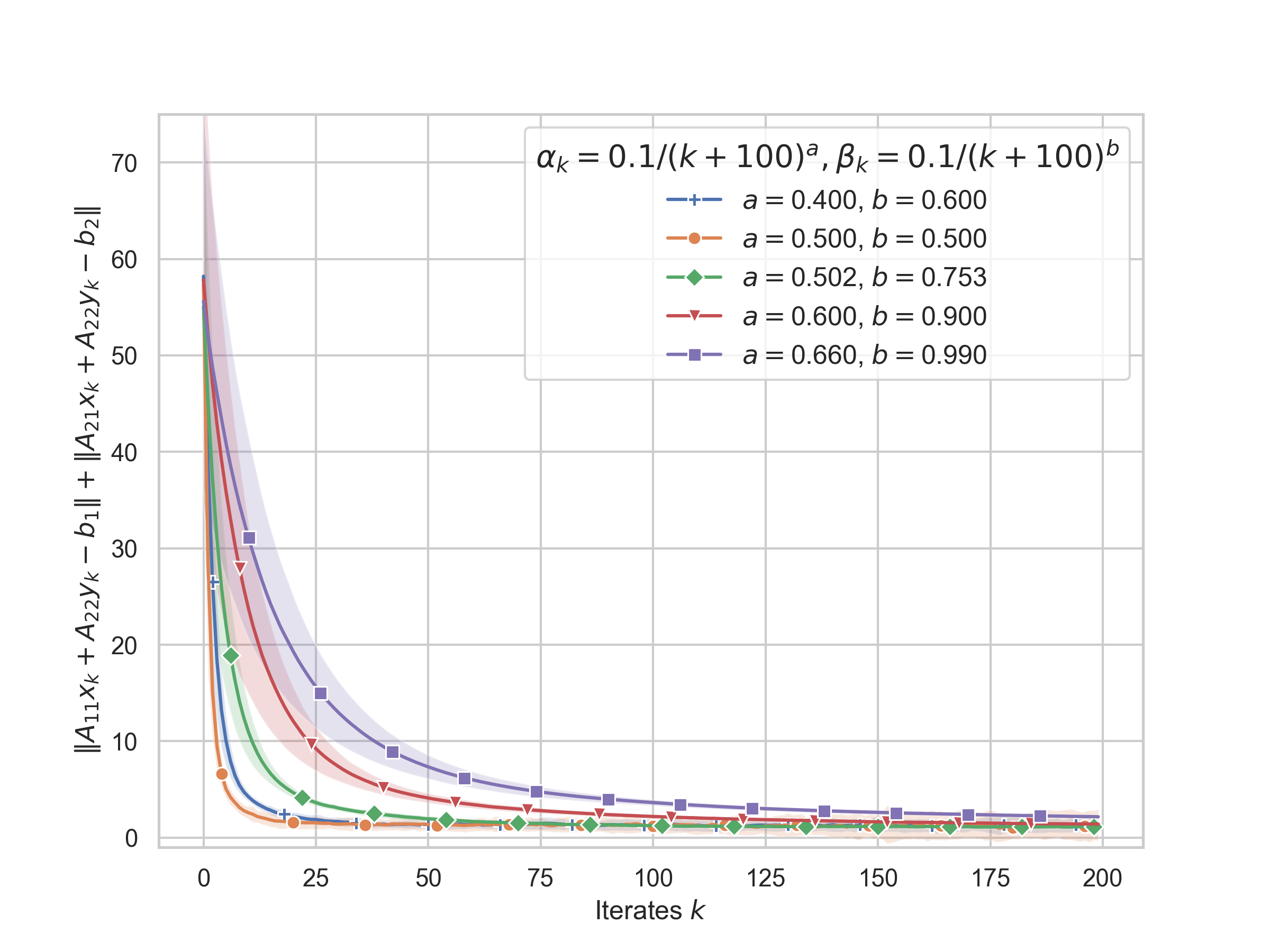}
        \caption{Variation with exponents $\afrak,\bfrak$}
        \label{fig:linear_1}
    \end{subfigure}
    \begin{subfigure}[b]{0.495\textwidth}
        \centering
        \includegraphics[width=\linewidth]{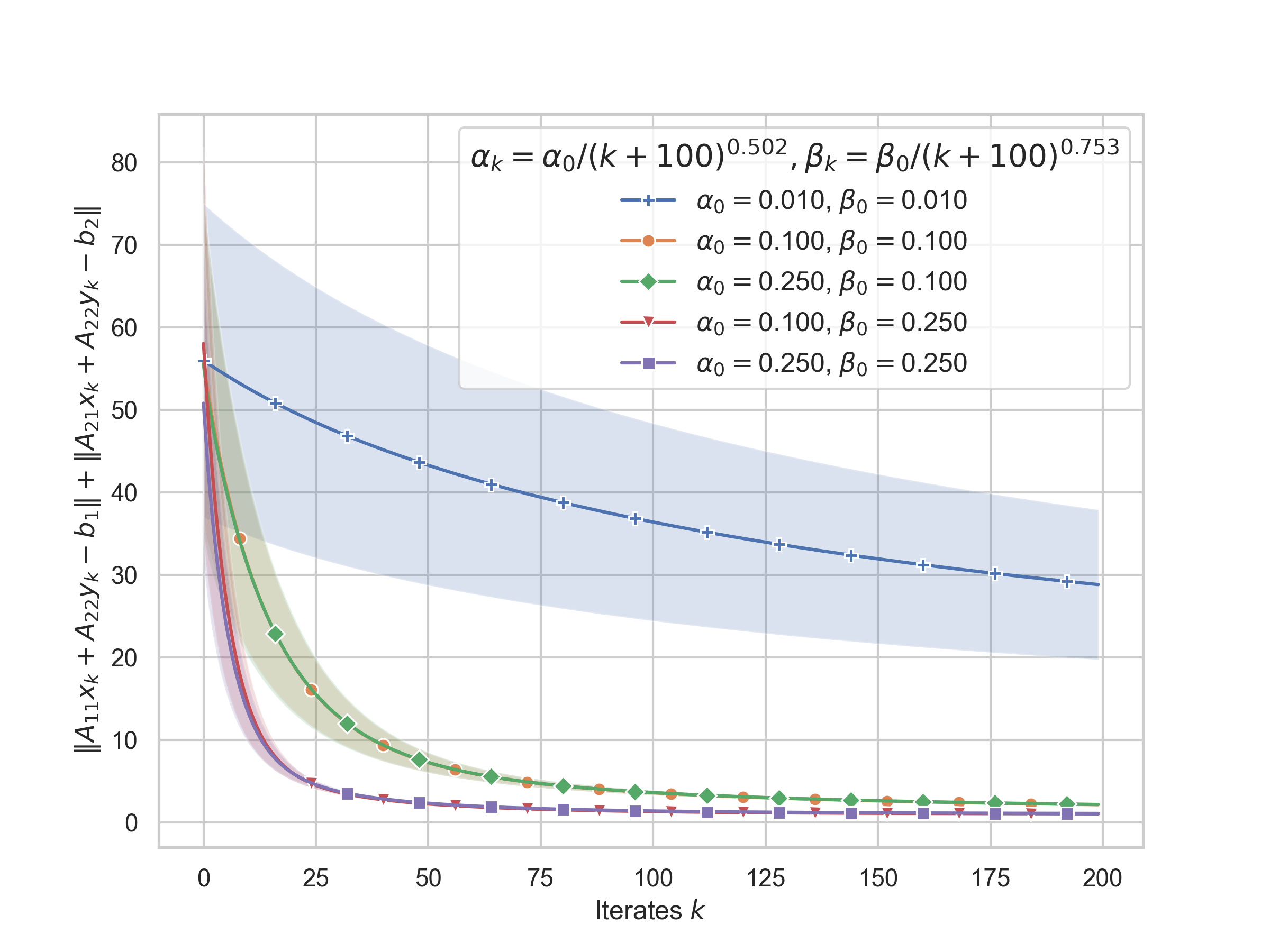}
        \caption{Variation with constants $\alpha,\beta$}
        \label{fig:linear_2}
    \end{subfigure}
    \caption{Linear Stochastic Approximation: effect of stepsize sequence on residual errors}
    \label{fig:linear}
\end{figure}

We also want to note an interesting pattern observed consistently across all three scenarios. Even though the choices $\afrak=0.5, \bfrak=0.5$ and $\afrak=0.4,\bfrak=0.6$ do not satisfy the assumptions required for our theoretical bounds, they show the the fastest empirical convergence in Figures~\ref{fig:linear}–\ref{fig:constrained}. This may indicate that stronger bounds may be possible under modified assumptions. However, these settings also exhibit sensitivity to parameter perturbations: for instance, increasing $\alpha$ from $0.1$ to $0.5$ can lead to instability in some runs, with iterates diverging to magnitudes exceeding $10^{200}$.

\begin{figure}[htbp]
    \centering
    \begin{subfigure}[b]{0.495\textwidth}
        \centering
        \includegraphics[width=\linewidth]{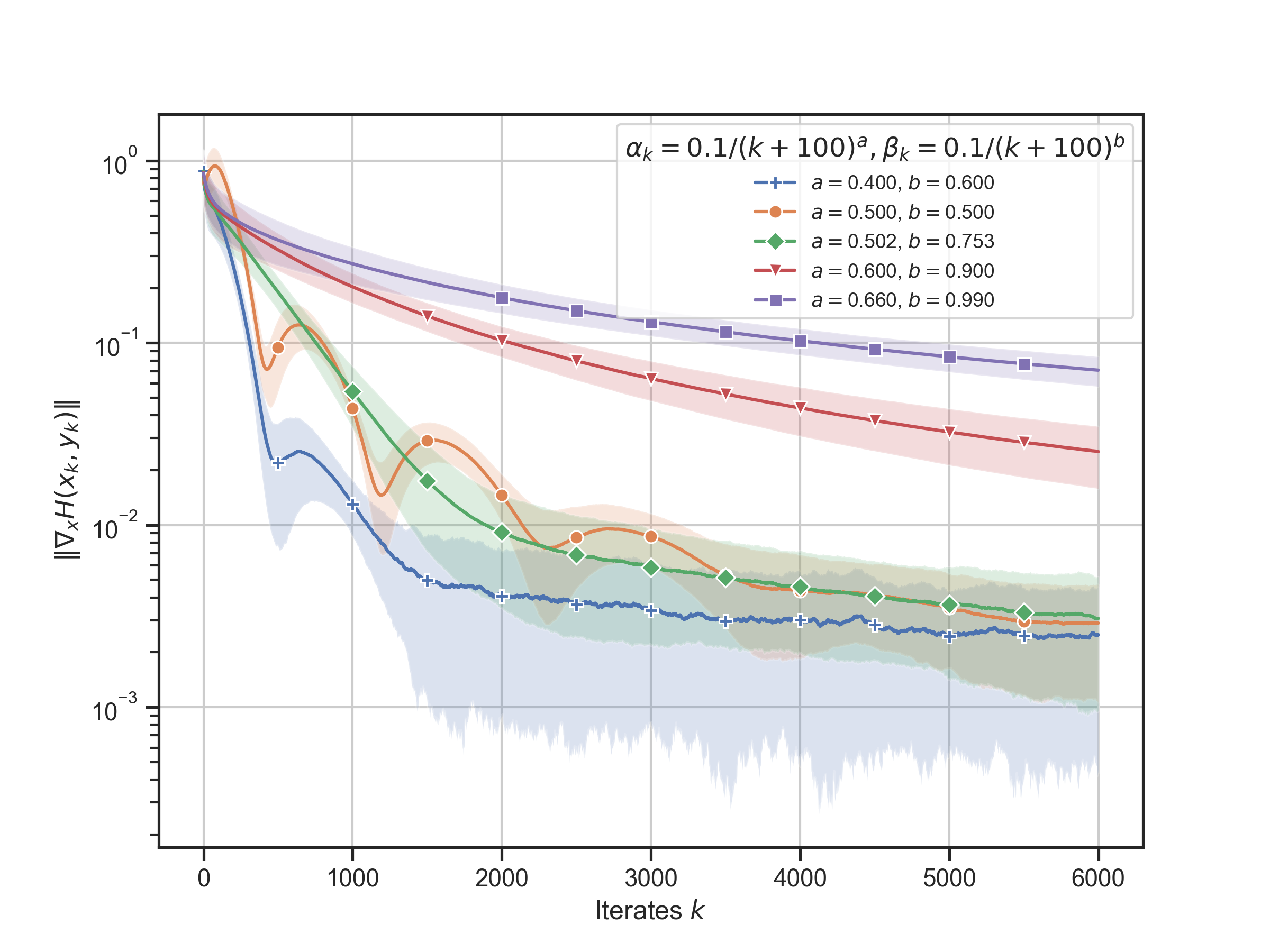}
        \caption{Residual error for the faster time-scale}
        \label{fig:minimax_1}
    \end{subfigure}
    \hfill
    \begin{subfigure}[b]{0.495\textwidth}
        \centering
        \includegraphics[width=\linewidth]{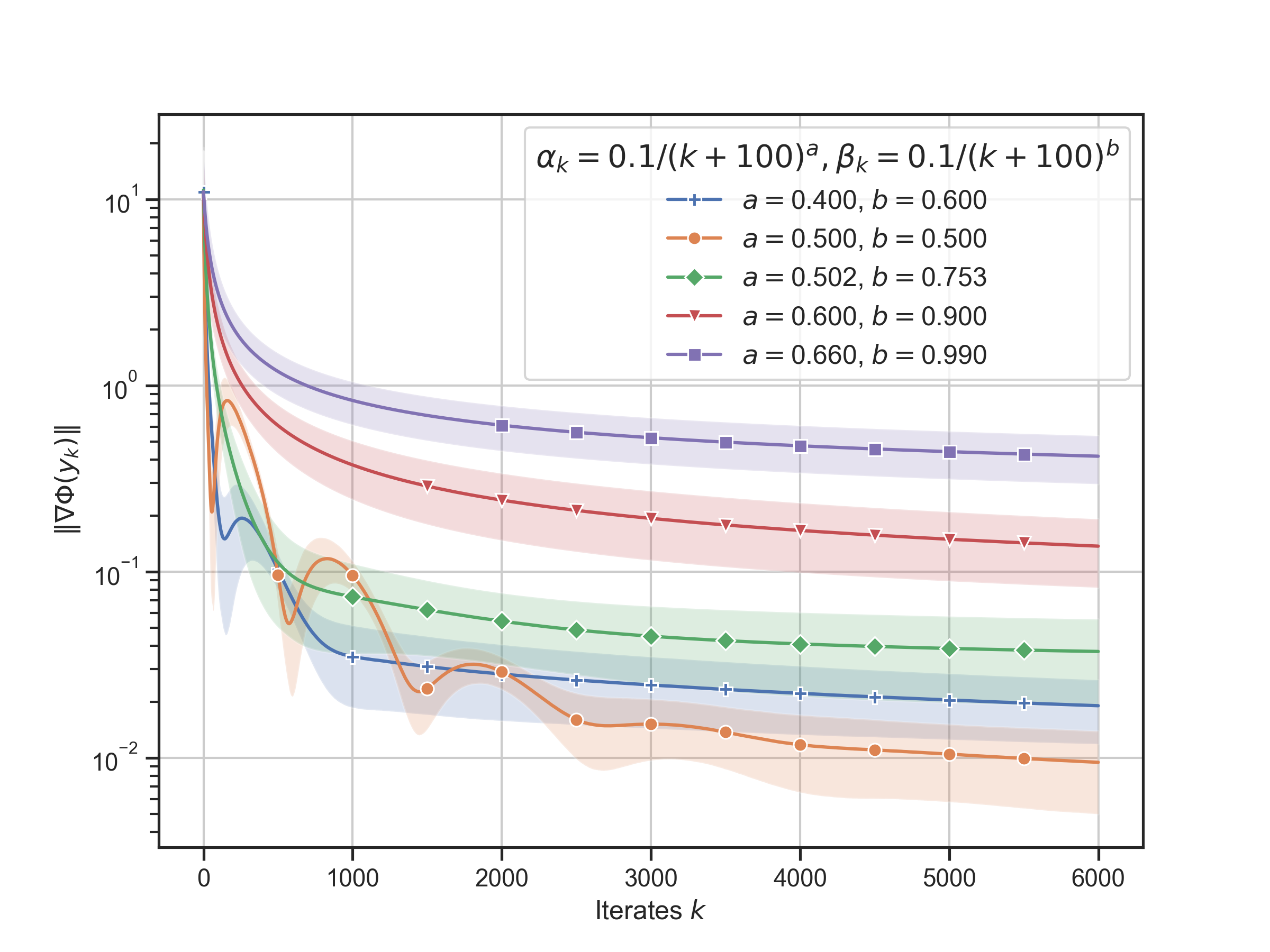}
        \caption{Residual error for the slower time-scale}
        \label{fig:minimax_2}
    \end{subfigure}
    \caption{Minimax Optimization: residual errors for the two time-scales}
    \label{fig:minimax}
\end{figure}

We present the simulations for minimax optimization in Figure \ref{fig:minimax}. We choose the function $H(x,y)=x^TAy-0.5\|x\|^2+(y^TQy)^2$ where $A\in\RR^{5\times 5}$ is a randomly generated matrix and $Q\in\RR^{5\times5}$ is a randomly generated symmetric positive semi-definite matrix. In Figure \ref{fig:minimax_1}, we plot $\|\nabla_x H(x_k,y_k)\|$, the residual error for the faster time-scale and in Figure \ref{fig:minimax_2}, we plot $\|\nabla \Phi(y_k)\|$, the residual error for the slower time-scale. We observe that both these residual errors converge to zero, and the dependence on the stepsize is similar to that observed in the linear case. 

For constrained optimization, we choose the function $H_0(x)=\|x-\ell\|^2+\sum_{i=1}^{20}e^{x(i)}$ where $x,\ell\in\RR^{20}$ and $x(i)$ denotes the $i$-th element of the vector. Motivated by the application to distributed optimization, we impose local constraints of the form $H_j(x) \leq 0$ for each $j \in {1, \ldots, 4}$, where $H_j(x) = \sum_{i=5j+1}^{5j+5} x(i)^2 - 4$. These constraints enforce that each 5-dimensional block of the vector $x$ lies within a ball of radius two. Vectors $b_0,\ell$ and the matrix $A$ are chosen randomly to ensure that all assumptions are satisfied. In Figure \ref{fig:constrained_1}, we study the effect of the stepsize constants $\alpha$ and $\beta$ on the residual error. Since the faster time-scale is projected, larger stepsizes can be used without causing instability in the iterates. However, while increasing the stepsize initially improves convergence rate, further increases eventually degrade performance. 

\begin{figure}[htbp]
    \centering
    \begin{subfigure}[b]{0.495\textwidth}
        \centering
        \includegraphics[width=\linewidth]{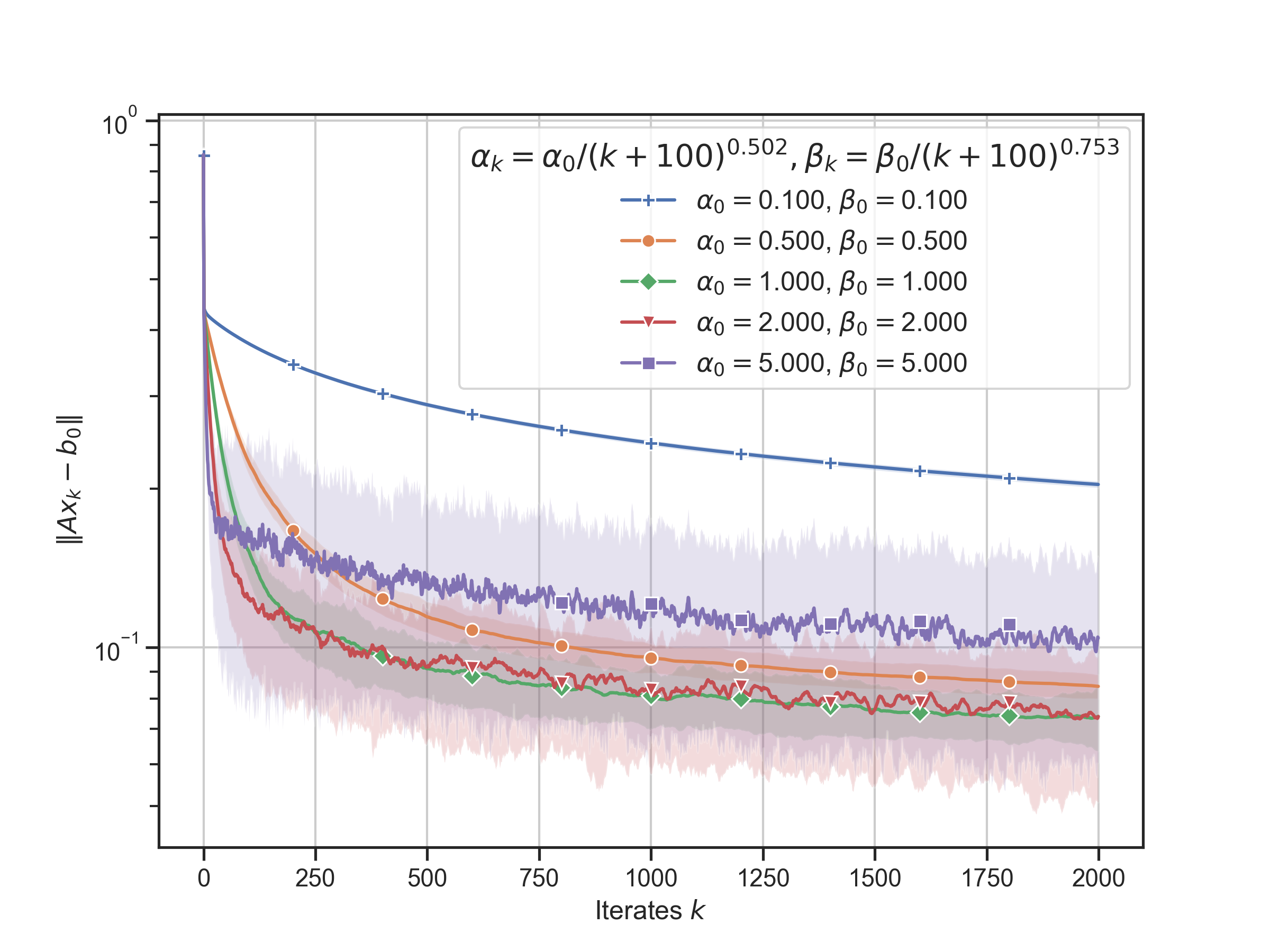}
        \caption{Effect of constants $\alpha,\beta$}
        \label{fig:constrained_1}
    \end{subfigure}
    \hfill
    \begin{subfigure}[b]{0.495\textwidth}
        \centering
        \includegraphics[width=\linewidth]{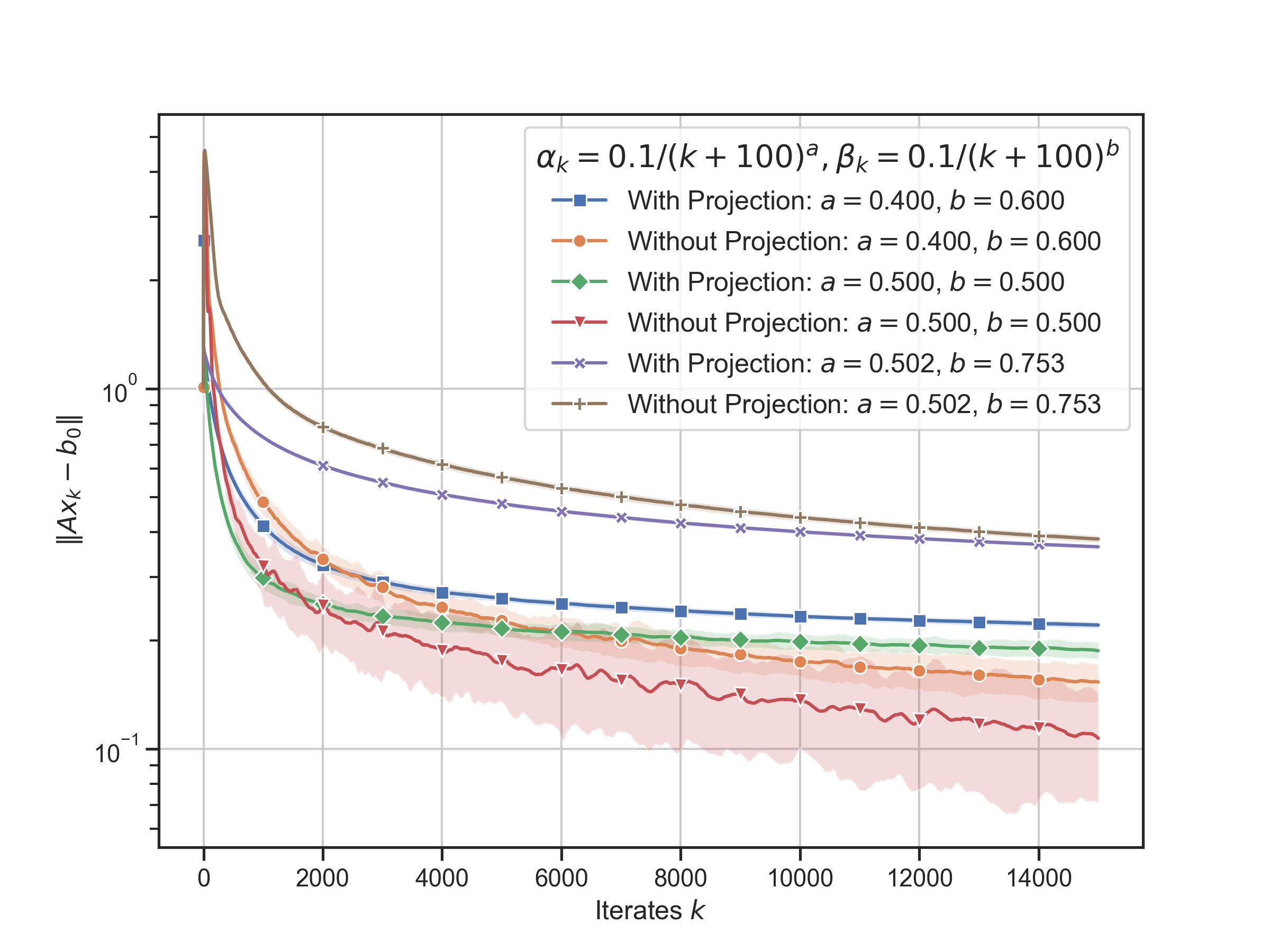}
        \caption{Comparison with the non-projected case}
        \label{fig:constrained_2}
    \end{subfigure}
    \caption{Constrained optimization: (a) effect of stepsize on residual error, and (b) comparison with the case where there is no projection in the faster time-scale}
    \label{fig:constrained}
\end{figure}

In Figure \ref{fig:constrained_2}, we compare the performance of the projected case with the case where there are no projections in the faster time-scale. Recall that the projection in the faster time-scale leads to the non-expansiveness in the slower time-scale and in the absence of a projection, both time-scales have contractive mappings (Lemma \ref{lemma:Lagrangian}). We observe that the no-projection case exhibits faster convergence, aligning with the intuition that contractive mappings lead to improved convergence rates (Table~\ref{table:comp}).

\section{Conclusion}\label{sec:conclusion}
In this paper, we studied two-time-scale iterations where the slower time-scale has a non-expansive mapping. Our main contribution is the finite-time bound for this setting. We showed that the mean square residual error decays at a rate of $\mathcal{O}(k^{-1/4+\epsilon})$, where $\epsilon>0$ is arbitrarily small depending on the step size. Our analysis takes inspiration from techniques in contractive SA and inexact KM iterations. Extensions of our work include studying the same algorithm under Markovian noise and extending the result to Banach spaces. Another extension could be to study non-expansive mappings in both time-scales, but the analysis would require further assumptions to ensure that the faster time-scale has a unique fixed point for each $y$.

\appendix
\section{Proofs required for Theorem \ref{thm:main}}\label{app:main-proof}

\subsection{Proof for Lemma \ref{lemma:xstar-lip}}

\begin{proof}[\textbf{Proof for Lemma \ref{lemma:xstar-lip}}]
    Note that 
    \begin{align*}
        \|\xstar(y_1)-\xstar(y_2)\|&=\|f(\xstar(y_1),y_1)-f(\xstar(y_2),y_2)\|\\
        &\leq \|f(\xstar(y_1),y_1)-f(\xstar(y_2),y_1)\|+\|f(\xstar(y_2),y_1)-f(\xstar(y_2),y_2)\|\\
        &\leq \mu\|\xstar(y_1)-\xstar(y_2)\|+L\|y_1-y_2\|. 
    \end{align*}
    Here the first equality follows from the fact that $\xstar(y)$ is a fixed point for $f(\cdot,y)$ and the last inequality follows from contractiveness and Lipschitzness of $f(x,y)$ in $x$ and $y$, respectively. This implies that 
    $(1-\mu)\|\xstar(y_1)-\xstar(y_2)\|\leq L\|y_1-y_2\|.$
    Hence $\xstar(\cdot)$ is Lipschitz with constant $L_0\coloneqq L/\mu'$, where $\mu'=1-\mu$. 
\end{proof}

\subsection{Proof for Lemma \ref{lemma:inter}}

\begin{proof}[\textbf{Proof for Lemma \ref{lemma:inter}}] For this proof, we first give an intermediate bound on $\EE[\|x_{k+1}-\xstar(y_{k+1})\|^2]$ and then on $\EE[\|y_{k+1}-\ystar\|^2]$. 
\subsubsection*{Recursive Bound on $\EE[\|x_{k+1}-\xstar(y_{k+1})\|^2]$} By adding and subtracting $\xstar(y_k)$,
    \begin{subequations}\label{split1}
        \begin{align}
            &\|x_{k+1}-\xstar(y_{k+1})\|^2\nonumber\\
            &=\|x_k-\xstar(y_k)+\alpha_k(f(x_k,y_k)-x_k)\|^2\label{split11}\\
            &\;\;+\|\alpha_kM_{k+1}+\xstar(y_k)-\xstar(y_{k+1})\|^2\label{split12}\\
            &\;\;+2\Big\langle x_k-\xstar(y_k)+\alpha_k(f(x_k,y_k)-x_k),\alpha_kM_{k+1}+\xstar(y_k)-\xstar(y_{k+1})\Big\rangle. \label{split13}
        \end{align}
    \end{subequations}
 We deal with the three terms \eqref{split11}-\eqref{split13} separately as follows.

    \textbf{Term \ref{split11} --- } 
    \begin{align}\label{split11-simp}
        &\|x_k-\xstar(y_k)+\alpha_k(f(x_k,y_k)-x_k)\|^2\nonumber\\
        &=\|(1-\alpha_k)(x_k-\xstar(y_k))+\alpha_k(f(x_k,y_k)-f(\xstar(y_k),y_k))\|^2\nonumber\\
        &\leq \big((1-\alpha_k)\|x_k-\xstar(y_k)\|+\alpha_k\mu\|x_k-\xstar(y_k)\|\big)^2\nonumber\\
        &\leq (1-\mu'\alpha_k)^2\|x_k-\xstar(y_k)\|^2.
        \end{align}
 Here, the first equality follows from the fact that $f(\xstar(y_k),y_k)=\xstar(y_k)$, and the first inequality follows from the property that $f(\cdot,y)$ is contractive, and that $0\leq \alpha_k\leq 1$.

        \textbf{Term \ref{split12} --- } This term can be thought of as the  `error term' for the iteration with the faster time-scale. We simplify it as follows. 
        \begin{align}\label{split12-simp}
            &\EE[\|\alpha_kM_{k+1}+\xstar(y_k)-\xstar(y_{k+1})\|^2\mid\FF_k]\nonumber\\
            &\leq \EE\left[2\alpha_k^2\|M_{k+1}\|^2+2\|\xstar(y_{k+1})-\xstar(y_k)\|^2\mid\FF_k\right]\nonumber\\
            &\leq 2\cc_2\alpha_k^2(1+\|x_k\|^2+\|y_k\|^2)+2L_0^2\zeta_1\beta_k^2(1+\|x_k\|^2+\|y_k\|^2)\nonumber\\
            &= (2\cc_2\alpha_k^2+2L_0^2\zeta_1\beta_k^2)(1+\|x_k\|^2+\|y_k\|^2),
        \end{align}
        where the second inequality uses Assumption \ref{Martingale} and Lemma \ref{lemma:tech}\ref{lemma:yn-diff} (Appendix \ref{app:tech}).
        
        \textbf{Term \ref{split13} --- } To simplify \eqref{split13}, we first note that:
        \begin{align*}
            &2\Big\langle x_k-\xstar(y_k)+\alpha_k(f(x_k,y_k)-x_k),\xstar(y_k)-\xstar(y_{k+1})\Big\rangle\\
            &\leq \mu'\alpha_k\|x_k-\xstar(y_k)+\alpha_k(f(x_k,y_k)-x_k)\|^2+\frac{1}{\mu'\alpha_k}\|\xstar(y_k)-\xstar(y_{k+1})\|^2.
        \end{align*}
        This follows from applying Cauchy-Schwarz inequality, and then applying AM-GM inequality - $2ab\leq \vartheta a^2+(1/\vartheta)b^2$, with $\vartheta=\mu'\alpha_k$. We apply \eqref{split11-simp} and Lemma \ref{lemma:tech}\ref{lemma:yn-diff}, respectively, for the two terms.
        Using the fact that $M_{k+1}$ is a martingale difference sequence, we have 
        $\EE\Big[2\big\langle x_k-\xstar(y_k)+\alpha_k(f(x_k,y_k)-x_k),M_{k+1}\big\rangle\mid \FF_k\Big]=0.$
        Hence,
        \begin{align}\label{split13-simp}
            &\EE\Big[2\big\langle x_k-\xstar(y_k)+\alpha_k(f(x_k,y_k)-x_k),\alpha_kM_{k+1}+\xstar(y_k)-\xstar(y_{k+1})\big\rangle\mid\FF_k\Big]\nonumber\\
            &\leq \mu'\alpha_k\|x_k-\xstar(y_k)\|^2+\frac{L_0^2\zeta_1}{\mu'}\frac{\beta_k^2}{\alpha_k}(1+\|x_k\|^2+\|y_k\|^2).
        \end{align}

Having obtained a bound on the conditional expectation of the three terms in \eqref{split1}, we now combine the bounds \eqref{split11-simp}-\eqref{split13-simp} to get
\begin{align*}
    \EE\left[\|x_{k+1}-\xstar(y_{k+1})\|^2\mid\FF_k\right]&\leq \left((1-\mu'\alpha_k)^2+\mu'\alpha_k\right)\|x_k-\xstar(y_k)\|^2\\
    &+ \left(2\cc_2\alpha_k^2+2L_0^2\zeta_1\beta_k^2+\frac{L_0^2\zeta_1}{\mu'}\frac{\beta_k^2}{\alpha_k}\right)\left(1+\|x_k\|^2+\|y_k\|^2\right).
\end{align*}
Now for $\ystar\in\Ystar$,  we apply Lemma \ref{lemma:tech}\ref{lemma:xnyn-bound}.
\begin{align*}
    &\EE\left[\|x_{k+1}-\xstar(y_{k+1})\|^2\mid\FF_k\right]\\
    &\leq \left(1-2\mu'\alpha_k+\mu'^2\alpha_k^2+\mu'\alpha_k\right)\|x_k-\xstar(y_k)\|^2\\
    &\;\;+\left(2\cc_2\alpha_k^2+2L_0^2\zeta_1\beta_k^2+\frac{L_0^2\zeta_1}{\mu'}\frac{\beta_k^2}{\alpha_k}\right)\zeta_2\left(1+\|x_k-\xstar(y_k)\|^2+\|y_k-\ystar\|^2\right).
\end{align*}
Now, we use our assumption that $\beta_k^2/\alpha_k^3\leq 1$ (Assumption \ref{assu:stepsize}) to obtain
\begin{align}\label{inter-x}
    \EE\left[\|x_{k+1}-\xstar(y_{k+1})\|^2\mid\FF_k\right]&\leq (1-\mu'\alpha_k)\|x_k-\xstar(y_k)\|^2\nonumber\\
    &\;\;+\Gamma_1\alpha_k^2\left(1+\|x_k-\xstar(y_k)\|^2+\|y_k-\ystar\|^2\right),
\end{align}
where $\Gamma_1\coloneqq \left(\mu'^2+2\cc_2\zeta_2+2L_0^2\zeta_1\zeta_2+2L_0^2\zeta_1\zeta_2/\mu'\right)$. This is our intermediate recursive bound on $\EE[\|x_{k+1}-\xstar(y_{k+1})\|^2\mid\FF_k]$. 

\subsubsection*{Recursive Bound on $\EE[\|y_{k+1}-\ystar\|^2]$}
We first use Corollary 2.14 from \cite{convexbook} which states that for any $\epsilon\in(0,1)$ and $y_1,y_2\in\RR^{d_2}$, we have
$\|(1-\epsilon)y_1+\epsilon y_2\|^2=(1-\epsilon)\|y_1\|^2+\epsilon \|y_2\|^2 -\epsilon(1-\epsilon)\|y_1-y_2\|^2.$ Therefore,
\begin{subequations}\label{split2}
    \begin{align}
    \|y_{k+1}-\ystar\|^2&=\|(1-\beta_k)(y_k-y^*)+\beta_k(g(x_k,y_k)-y^*+M'_{k+1})\|^2\nonumber\\
    &=(1-\beta_k)\|y_k-\ystar\|^2+\beta_k\|g(x_k,y_k)-\ystar+M'_{k+1}\|^2\label{split21}\\
    &\;\;-(1-\beta_k)(\beta_k)\|y_k-g(x_k,y_k)-M'_{k+1}\|^2.\label{split22}
\end{align}
\end{subequations}

\textbf{Term \ref{split21} --- } For the term \eqref{split21}, we first note that
\begin{align*}
    \|g(x_k,y_k)-\ystar+M'_{k+1}\|^2&= \|g(x_k,y_k)-\ystar\|^2+\|M'_{k+1}\|^2+2\langle g(x_k,y_k)-\ystar,M'_{k+1}\rangle.
\end{align*}
The expectation of the third term above is zero when conditioned on $\FF_k$. Now, recall that $h(y)=g(\xstar(y),y)$ is non-expansive. For the first term, we note the following.
\begin{align*}
    &\|g(x_k,y_k)-\ystar\|^2=\|h(y_k)-\ystar+g(x_k,y_k)-h(y_k)\|^2\\
    &= \|h(y_k)-\ystar\|^2+\|g(x_k,y_k)-h(y_k)\|^2+2\langle h(y_k)-\ystar,g(x_k,y_k)-h(y_k)\rangle\\
    &\leq\|h(y_k)-h(\ystar)\|^2+\|g(x_k,y_k)-h(y_k)\|^2+2\|h(y_k)-h(\ystar)\|\|g(x_k,y_k)-h(y_k)\|\\
    &\leq \|y_k-\ystar\|^2+L^2\|x_k-\xstar(y_k)\|^2+2L\|y_k-\ystar\|\|x_k-\xstar(y_k)\|.
\end{align*}
Here, the last inequality follows from the non-expansiveness of map $h(\cdot)$ (Assumption \ref{g-nonexp}), and the Lipschitz nature of map $g(\cdot,\cdot)$ (Assumption \ref{Lipschitz}). For the last term above, we use Young's inequality to note the following. 
$$2L\beta_k\|y_k-\ystar\|\|x_k-\xstar(y_k)\|\leq \frac{\mu'\alpha_k}{2}\|x_k-\xstar(y_k)\|^2+\frac{2L^2\beta_k^2}{\mu'\alpha_k}\|y_k-\ystar\|^2.$$

 Combining these results, we finally get the following result. 
\begin{align*}
    &(1-\beta_k)\EE[\|y_k-\ystar\|^2]+\beta_k\EE[\|g(x_k,y_k)-\ystar+M'_{k+1}\|^2\mid \FF_k]\\
    &\leq\left(L^2\beta_k+\frac{\mu'\alpha_k}{2}\right)\|x_k-\xstar(y_k)\|^2+\! \left(1+\frac{2L^2\beta_k^2}{\mu'\alpha_k}\right)\|y_k-\ystar\|^2+ \beta_k\EE[\|M'_{k+1}\|^2\mid\FF_k].\nonumber
\end{align*}

\textbf{Term \ref{split22} --- }
We first note that 
\begin{align*}
    \|y_k-g(x_k,y_k)-M'_{k+1}\|^2=\|y_k-g(x_k,y_k)\|^2+\|M'_{k+1}\|^2-2\langle y_k-g(x_k,y_k),M'_{k+1}\rangle.
\end{align*}
As $M'_{k+1}$ is a martingale difference sequence, the expectation of the third term is zero.
\begin{align*}
    &-\beta_k(1-\beta_k)\EE[\|y_k-g(x_k,y_k)-M'_{k+1}\|^2\mid\FF_k]\nonumber\\
    &=-\beta_k(1-\beta_k)\|y_k-g(x_k,y_k)\|^2+\left(-\beta_k+\beta_k^2\right)\EE[\|M'_{k+1}\|^2\mid\FF_k].
\end{align*}

Combining the bounds for terms \eqref{split21} and \eqref{split22}, we now have
\begin{align*}
    \EE[\|y_{k+1}-\ystar\|^2\mid\FF_k]&\leq -\beta_k(1-\beta_k)\|y_k-g(x_k,y_k)\|^2+\beta_k^2\EE[\|M'_{k+1}\|^2\mid\FF_k]\\
    &\;\;+ \left(1+\frac{2L^2\beta_k^2}{\mu'\alpha_k}\right)\|y_k-\ystar\|^2+\left(L^2\beta_k+\frac{\mu'\alpha_k}{2}\right)\|x_k-\xstar(y_k)\|^2.
\end{align*}
Now using Assumption \ref{Martingale} and Lemma \ref{lemma:tech}\ref{lemma:xnyn-bound}, we have that 
$\EE[\|M'_{k+1}\|^2\mid\FF_k]$ is bounded by $\cc_2\zeta_2\left(1+\|x_k-\xstar(y_k)\|^2+\|y_k-\ystar\|^2\right).$
Hence we have 
\begin{align*}
    &\EE[\|y_{k+1}-\ystar\|^2\mid\FF_k]\leq \cc_2\zeta_2\beta_k^2-\beta_k(1-\beta_k)\|y_k-g(x_k,y_k)\|^2\\
    &\;\;+ \left(1+\frac{2L^2\beta_k^2}{\mu'\alpha_k}+\cc_2\zeta_2\beta_k^2\right)\|y_k-\ystar\|^2+\left(L^2\beta_k+\frac{\mu'\alpha_k}{2}+\cc_2\zeta_2\beta_k^2\right)\|x_k-\xstar(y_k)\|^2.
\end{align*}
Using Assumption \ref{assu:stepsize}, we have $\beta_k^2\leq \alpha_k^3$. We further have the assumption that $\beta_k/\alpha_k\leq \mu'/(2L^2)$ (and hence $L^2\beta_k\leq \mu'\alpha_k/2$).   
\begin{align}\label{inter-y}
    \EE\left[\|y_{k+1}-\ystar\|^2\mid\FF_k\right]&\leq \left(1+\Gamma_2\alpha_k^2\right)\|y_k-\ystar\|^2+(\mu'\alpha_k+\Gamma_2\alpha_k^2)\|x_k-\xstar(y_k)\|^2\nonumber\\
    &\;\;+\Gamma_2\alpha_k^2-\beta_k(1-\beta_k)\|y_k-g(x_k,y_k)\|^2.
\end{align}
Here $\Gamma_2=2L^2/\mu'+\cc_2\zeta_2$.
This is our recursive bound on $\EE[\|y_{k+1}-\ystar\|^2\mid\FF_k]$. 
\end{proof}

\subsection{Proof for Lemma \ref{lemma:bounded-expectation}}

\begin{proof}[\textbf{Proof for Lemma \ref{lemma:bounded-expectation}}] The lemma has two parts; we first show the stability of iterates, and then show that the iterates are bounded in expectation by a constant.
\subsubsection*{Almost Sure Boundedness}
Adding the two intermediate bounds from Lemma \ref{lemma:inter}, we get
\begin{align}\label{eqn:inter-sum}
    &\EE\left[\|x_{k+1}-\xstar(y_{k+1})\|^2+\|y_{k+1}-\ystar\|^2\mid\FF_k\right]\nonumber\\
    &\leq \left(1+(\Gamma_1+\Gamma_2)\alpha_k^2\right)\left(\|x_k-\xstar(y_k)\|^2+\|y_k-\ystar\|^2\right)+(\Gamma_1+\Gamma_2)\alpha_k^2.
\end{align}
Here we have dropped the term $-\beta_k(1-\beta_k)\|y_k-g(x_k,y_k)\|^2$ as $0<\beta_k<1$. Define $V_k=\left(\|x_k-\xstar(y_k)\|^2+\|y_k-\ystar\|^2\right), A_k=(\Gamma_1+\Gamma_2)\alpha_k^2$. Then, we have $\EE\left[V_{k+1}\mid\FF_k\right]\leq (1+A_k)V_k+A_k.$ Since $\alpha_k$ is square-summable, $A_k$ is summable. Then we can apply Robbins-Siegmund Theorem \cite{Robbins-Siegmund} which says that $V_k$ converges to a finite random variable. In particular, $\sup_k V_k<\infty$ almost surely, and hence using the Lipschitz nature of the map $\xstar(\cdot)$, $\sup_k (\|x_k\|+\|y_k\|)<\infty$.

\subsubsection*{Boundedness in Expectation}
Next, we prove that the iterates are bounded in expectation. Taking expectation on both sides in \eqref{eqn:inter-sum}, we get the following. 
\begin{align*}
    &\EE\left[\|x_{k+1}-\xstar(y_{k+1})\|^2+\|y_{k+1}-\ystar\|^2\right]\\
    &\leq \left(1+(\Gamma_1+\Gamma_2)\alpha_k^2\right)\EE\left[\|x_k-\xstar(y_k)\|^2+\|y_k-\ystar\|^2\right]+(\Gamma_1+\Gamma_2)\alpha_k^2.
\end{align*}
Expanding the recursion, we get the following.
\begin{align*}
    &\EE\left[\|x_k-\xstar(y_k)\|^2+\|y_k-\ystar\|^2\right]\\
    &\leq \prod_{i=0}^{k-1} \left(1+(\Gamma_1+\Gamma_2)\alpha_i^2\right)\EE\left[\|x_0-\xstar(y_0)\|^2+\|y_0-\ystar\|^2\right]\\
    &\;\;+\sum_{i=0}^{k-1}(\Gamma_1+\Gamma_2)\alpha_i^2\prod_{j=i+1}^{k-1}\left(1+(\Gamma_1+\Gamma_2)\alpha_j^2\right)\\
    &\leq e^{\left((\Gamma_1+\Gamma_2)\sum_{i=0}^{\infty}\alpha_i^2\right)}\left(\EE\left[\|x_0-\xstar(y_0)\|^2+\|y_0-\ystar\|^2\right]+\sum_{i=0}^{k-1}(\Gamma_1+\Gamma_2)\alpha_i^2\right).
\end{align*}
Now, we know that the stepsize sequence $\alpha_k^2$ is summable. Let $D_1\coloneqq \sum_{k=0}^{\infty}\alpha_k^2<\infty$. Then we have 
\begin{align*}
    &\EE\left[\|x_k-\xstar(y_k)\|^2+\|y_k-\ystar\|^2\right]\\
    &\leq e^{D_1(\Gamma_1+\Gamma_2)}\left((\Gamma_1+\Gamma_2)D_1+\|x_0-\xstar(y_0)\|^2+\|y_0-\ystar\|^2\right)\\
    &\leq \kappa_1(\kappa_1+\|x_0-\xstar(y_0)\|^2+\|y_0-\ystar\|^2)\eqqcolon \Gamma_3.
\end{align*}
Here $\kappa_1=e^{D_1(\Gamma_1+\Gamma_2)}$. Hence the iterates $x_k$ and $y_k$ are bounded in expectation. 
\end{proof}

\subsection{Proof for Lemma \ref{lemma:wp1_and_bound}}
\begin{proof}[\textbf{Proof for Lemma \ref{lemma:wp1_and_bound}}]
This lemma has three parts. First, we show the almost sure convergence of iterates, and then work on mean square bounds for the iterates.
\subsubsection*{Almost Sure Convergence}
We finally prove almost sure convergence for our iterates. We have already shown the stability of iterates in Lemma \ref{lemma:bounded-expectation}. For a fixed $y$, $f(x,y)$ is contractive in $x$, and hence $\xstar(y)$ is a globally asymptotically stable equilibrium of the ODE limit (Theorem 12.1 from \cite{Borkar-book}) $$\dot{x}(t)=f(x(t),y)-x(t).$$ We have also shown that $\xstar(y)$ is Lipschitz in $y$ (Lemma \ref{lemma:xstar-lip}). And hence using Lemma 8.1 from \cite{Borkar-book}, $\|x_t-\xstar(y_t)\|\rightarrow 0$ with probability $1$. Now consider the ODE limit $$\dot{y}(t)=g(\xstar(y(t)),y(t))-y(t).$$ Theorem 12.2 and Remark 3 (mentioned below the theorem) from \cite{Borkar-book} together imply that $y_t$ almost surely converges to the set $\Ystar$.

\subsubsection*{Bound on $\EE[\|x_k-\xstar(y_k)\|^2]$}
Applying Lemma \ref{lemma:bounded-expectation} on the bound from Lemma \ref{lemma:inter} (after taking expectation), we get 
$$\EE\left[\|x_{k+1}-\xstar(y_{k+1})\|^2\right]\leq (1-\mu'\alpha_k)\EE\left[\|x_k-\xstar(y_k)\|^2\right]+\Gamma_1(1+\Gamma_3)\alpha_k^2.$$
We get the following bound on expanding the above recursion. 
$$\EE\left[\|x_k-\xstar(y_k)\|^2\right]\leq \|x_0-\xstar(y_0)\|^2\prod_{i=0}^{k-1}(1-\mu'\alpha_i)+ \Gamma_1(1+\Gamma_3)\sum_{i=0}^{k-1}\alpha_i^2\prod_{j=i+1}^{k-1}(1-\mu'\alpha_j).$$

For the first term above, there exists $D_2$ such that $1-\mu'\alpha/(k+K_1)^{\afrak}\leq 1-1/(k+1+D_2)$. Using Corollary 2.1.2 from \cite{Zaiwei}, we have 
$$\prod_{i=0}^{k-1}(1-\mu'\alpha_i)\leq \prod_{i=0}^{k-1}\left(1-\frac{1}{i+1+D_2}\right)\leq \frac{D_2}{k+D_2}.$$
For the second term, we use Corollary 2.1.2 from \cite{Zaiwei} to get the following. 
$$\sum_{i=0}^{k-1}\alpha_i^2\prod_{j=i+1}^{k-1}(1-\mu'\alpha_j)\leq \frac{2}{\mu'}\alpha_k.$$
 Here we use the fact that $\alpha_k$ is of the form $\alpha/(k+K_1)^\afrak$ where $0<\afrak<1$ and $K_1\geq \left(\frac{2\afrak}{\mu\alpha}\right)^{1-\afrak}$. Hence we get the following bound.
\begin{equation}\label{final-bound-x}
    \EE\left[\|x_k-\xstar(y_k)\|^2\right]\leq \frac{C_1}{(k+K_1)^{\afrak}}=D_3\alpha_k\leq \frac{C_1}{(k+1)^{\afrak}}.
\end{equation}
Here $C_1=D_3\alpha=\kappa_2\left(1+\|x_0-\xstar(y_0)\|^2+\|y_0-\ystar\|^2\right)$ where $\kappa_2=\max\{\mu'D_2+2\Gamma_1\alpha\kappa_1,2\Gamma_1\alpha(1+\kappa_1^2)\}/\mu'$. This completes our bound on $\EE\left[\|x_k-\xstar(y_k)\|^2\right]$.

\subsubsection*{Bound on summation of $\beta_i\EE[\|h(y_i)-y_i\|^2]$} We first recall that $h(y_k)=g(\xstar(y_k),y_k)$.
Combining \eqref{final-bound-x} and Lemma \ref{lemma:bounded-expectation} with the bound in Lemma \ref{lemma:inter},
\begin{align*}
    &\EE\left[\|y_{k+1}-\ystar\|^2\right]\\
    &\leq \EE\left[\|y_k-\ystar\|^2\right]+(\Gamma_2(1+\Gamma_3)+D_3\mu')\alpha_k^2-\beta_k(1-\beta_k)\EE\left[\|y_k-g(x_k,y_k)\|^2\right]\\
    &\leq \EE\left[\|y_k-\ystar\|^2\right]+(\Gamma_2(1+\Gamma_3)+D_3\mu'+D_3L^2)\alpha_k^2-(\beta_k/4)\EE\left[\|h(y_k)-y_k\|^2\right].
\end{align*}
For the second inequality, we use the assumption that $\beta_k\leq 1/2$, the fact that $\EE[\|h(y_k)-y_k\|^2]\leq 2\EE[\|g(x_k,y_k)-y_k\|^2]+2L^2\EE[\|x_k-\xstar(y_k)\|^2]$, and that $\EE[\|x_k-\xstar(y_k)\|^2]$ is bounded by $D_3\alpha_k$.

Hence, we have the following inequality which holds for all $i\geq 0$.
$$\frac{\beta_i}{4}\EE\left[\|h(y_i)-y_i\|^2\right]\leq \!\EE\left[\|y_i-\ystar\|^2\right]-\EE\left[\|y_{i+1}-\ystar\|^2\right]+(\Gamma_2(1+\Gamma_3)+D_3\mu'+D_3L^2)\alpha_i^2.$$
Summing above inequality from $i=0$ to $k$, we get 
\begin{align*}
    \sum_{i=0}^k\beta_i\EE\left[\|h(y_i)-y_i\|^2\right]&\leq 4\|y_0-\ystar\|^2+4(\Gamma_2(1+\Gamma_3)+D_3\mu'+2D_3L^2)D_1\eqqcolon \Gamma_4,
\end{align*}
where $D_1=\sum\alpha_k^2$ as defined in the proof for Lemma \ref{lemma:bounded-expectation}.
\end{proof}

\subsection{Proof for Lemma \ref{lemma:h-results}}

\begin{proof}[\textbf{Proof for Lemma \ref{lemma:h-results}}]
Define $U_{k+1}=(1-\beta_k)U_k+\beta_kM'_{k+1}$, an averaged error sequence, with $U_0=0$. Define $z_k=y_k-U_k$. We first show that $\EE[\|z_k-y_k\|^2]$ decays sufficiently fast.
\subsubsection*{Bound on $\EE[\|U_k\|^2]$}
Note that $$U_k=\sum_{i=0}^{k-1}\beta_i\prod_{j=i+1}^{k-1}(1-\beta_j)M'_{i+1}.$$
Using the property that the terms of a martingale difference sequence are orthogonal, 
\begin{align}\label{bound-U}
    \EE[\|y_k-z_k\|^2]= \EE\left[\|U_k\|^2\right]&\leq \sum_{i=0}^{k-1} \left(\beta_i\prod_{j=i+1}^{k-1}(1-\beta_j)\right)^2\EE\left[\|M'_{i+1}\|^2\right]\nonumber\\
    &\stackrel{(a)}{\leq} \sum_{i=0}^{k-1} \beta_i^2\prod_{j=i+1}^{k-1}(1-\beta_j) D_4\nonumber\\
    &\stackrel{(b)}{\leq} 2D_4\beta_k\eqqcolon \Gamma_5\beta_k.
\end{align}
Here, inequality (a) follows from Assumption \ref{Martingale} and Lemma \ref{lemma:bounded-expectation}, which together imply that $M'_{i+1}$ is bounded in expectation by some constant $D_4$. Inequality (b) follows from Corollary 2.1.2 from \cite{Zaiwei} and our assumption that $K_1\geq \left(\frac{2\bfrak}{\beta}\right)^{1/1-\bfrak}$.

\subsubsection*{Bound on summation of $\beta_i\EE[\|h(z_i)-z_i\|^2]$}
We next write $\{z_k\}$ as an iteration. We have the iteration $y_{k+1}=(1-\beta_k)y_k+\beta_k(g(x_k,y_k)+M'_{k+1})$. Then using the definition of $z_k$, we get
\begin{align*}
    z_{k+1}+U_{k+1}=(1-\beta_k)(z_k+U_k)+\beta_k g(x_k,y_k)+\beta_kM'_{k+1}. 
\end{align*}
Recall that $h(y)=g(\xstar(y),y)$. This gives us the iteration 
\begin{align*}
    z_{k+1}&=(1-\beta_k)z_k+\beta_kg(x_k,y_k)=z_k+\beta_k(h(z_k)-z_k+e_{k+1}).
\end{align*}
where $e_{k+1}=g(x_k,y_k)-h(y_k)+h(y_k)-h(z_k)$. Next, we write $\|h(z_i)-z_i\|$ in terms of $\|h(y_i)-y_i\|$. For this note the following.
\begin{align*}
    \|h(z_i)-z_i\|&\leq \|h(y_i)-y_i\|+\|h(z_i)-h(y_i)\|+\|y_i-z_i\|\leq\|h(y_i)-y_i\|+2\|y_i-z_i\|.
\end{align*}
Here, the second inequality follows from the non-expansive nature of function $h(\cdot)$. This gives us the inequality
$\|h(z_i)-z_i\|^2\leq 2\|h(y_i)-y_i\|^2+8\|y_i-z_i\|^2.$
Finally, to obtain a summation similar to Lemma \ref{lemma:wp1_and_bound} c), 
\begin{align*}
    \sum_{i=0}^k\beta_i\EE\left[\|h(z_i)-z_i\|^2\right]&\leq 2\sum_{i=0}^k\beta_i\EE\left[\|h(y_i)-y_i\|^2+4\|y_i-z_i\|^2\right]\\
    &\leq 2\Gamma_4+8\sum_{i=0}^{k}\beta_i\EE\left[\|y_i-z_i\|^2\right]\leq 2\Gamma_4+8\Gamma_5\sum_{i=0}^{k}\beta_i^2.
\end{align*}
Note that $\sum_i\beta_i^2$ is also bounded by $D_1$, and hence for $\Gamma_6\coloneqq2\Gamma_4+8\Gamma_5D_1$, we get
\begin{equation}\label{bound_h-z_i-1}
    \sum_{i=0}^k\beta_i\EE\left[\|h(z_i)-z_i\|^2\right]\leq \Gamma_6.
\end{equation}

\subsubsection*{Bound on $\EE[\|h(z_i)-z_i\|^2]$ in terms of $\EE[\|h(z_k)-z_k\|^2]$}
For simplicity, we define $p_i=h(z_i)-z_i$. We next obtain a bound for $\EE[\|p_i\|^2]$ in terms of $\EE[\|p_k\|^2]$ for all $i\leq k$. Recall that $z_{k+1}=z_k+\beta_k(h(z_k)-z_k+e_{k+1})=z_k+\beta_k(p_k+e_{k+1})$. We have the following relation. 
\begin{align*}
    \|p_{i+1}\|^2&=\|p_i\|^2+\|p_{i+1}-p_i\|^2+2\langle p_{i+1}-p_i,p_i\rangle\\
    &=\|p_i\|^2+\|p_{i+1}-p_i\|^2+\frac{2}{\beta_i}\langle p_{i+1}-p_i,z_{i+1}-z_i-\beta_i e_{i+1}\rangle.
\end{align*}
Now,
\begin{align*}
    2\langle p_{i+1}-p_i,z_{i+1}-z_i\rangle&=2\langle h(z_{i+1})-h(z_i)-z_{i+1}+z_i,z_{i+1}-z_i\rangle\\
    &=2\langle h(z_{i+1})-h(z_i),z_{i+1}-z_i\rangle-2\|z_{i+1}-z_i\|^2\\
    &=\|h(z_{i+1})-h(z_i)\|^2+\|z_{i+1}-z_i\|^2-2\|z_{i+1}-z_i\|^2\\
    &\;\;-\|h(z_{i+1})-h(z_i)-z_{i+1}+z_i\|^2\\
    &= \|h(z_{i+1})-h(z_i)\|^2-\|z_{i+1}-z_i\|^2-\|p_{i+1}-p_i\|^2\\
    &\leq -\|p_{i+1}-p_i\|^2.
\end{align*}
Here, the final inequality follows from the non-expansive nature of map $h(\cdot)$. Now,
\begin{align*}
    \|p_{i+1}\|^2&\leq \|p_i\|^2+\|p_{i+1}-p_i\|^2-\frac{2}{\beta_i}\|p_{i+1}-p_i\|^2-2\langle p_{i+1}-p_i,e_{i+1}\rangle\\
    &=\|p_i\|^2-\frac{1-\beta_i}{\beta_i}\|p_{i+1}-p_i\|^2-2\langle p_{i+1}-p_i,e_{i+1}\rangle.
\end{align*}
Now, note that 
$$\left\|p_{i+1}-p_i+\frac{\beta_i}{1-\beta_i}e_{i+1}\right\|^2=\|p_{i+1}-p_i\|^2+\left(\frac{\beta_i}{1-\beta_i}\right)^2\|e_{i+1}\|^2+\frac{2\beta_i}{1-\beta_i}\langle p_{i+1}-p_i,e_{i+1}\rangle.$$
Hence 
\begin{align*}
    \|p_{i+1}\|^2&\leq \|p_i\|^2-\frac{1-\beta_i}{\beta_i}\left\|p_{i+1}-p_i+\frac{\beta_i}{1-\beta_i}e_{i+1}\right\|^2+\frac{\beta_i}{1-\beta_i}\|e_{i+1}\|^2\\
    &\leq \|p_i\|^2+\frac{\beta_i}{1-\beta_i}\|e_{i+1}\|^2\leq \|p_i\|^2+2\beta_i\|e_{i+1}\|^2.
\end{align*}
Here, the final inequality follows from the fact that $\beta_i\leq 0.5$. Recall that $e_{i+1}=g(x_i,y_i)-h(y_i)+h(y_i)-h(z_i)$. Then 
\begin{align*}
    \EE\left[\|e_{i+1}\|^2\right]&\leq 2\EE\left[\|g(x_i,y_i)-h(y_i)\|^2\right]+2\EE\left[\|g(\xstar(y_i),y_i)-g(\xstar(z_i),z_i)\|^2\right]\\
    &\leq 2L^2\EE\left[\|x_i-\xstar(y_i)\|^2\right]+2\EE\left[\|y_i-z_i\|^2\right]\leq 2(L^2D_3+\Gamma_5)\alpha_i.
\end{align*}
Here, the final inequality follows from $\EE[\|x_i-\xstar(y_i)\|^2]\leq D_3\alpha_i$ and $\EE[\|y_i-z_i\|^2]\leq \Gamma_5\beta_i$. We also use the fact that $\beta_i\leq \alpha_i$ for all $i$. Define $\Gamma_7=4(L^2D_3+\Gamma_5)$. Hence
$\EE\left[\|p_{i+1}\|^2\right]\leq \EE\left[\|p_i\|^2\right]+\Gamma_7\beta_i\alpha_i.$
This finally gives us the following bound.
\begin{align*}\EE\left[\|h(z_i)-z_i\|^2\right]\geq \EE\left[\|h(z_k)-z_k\|^2\right]-\sum_{j=i}^{k-1}\Gamma_7\beta_j\alpha_j.\end{align*}
\end{proof}
\subsection{Proof for Theorem \ref{thm:main}}
\begin{proof}[\textbf{Proof for Theorem \ref{thm:main}}] We have already shown the almost sure convergence and the bound for $\EE[\|x_k-\xstar(y_k)\|^2]$ in Lemma \ref{lemma:wp1_and_bound}. We just need to complete the bound for $\EE[\|h(y_i)-y_i\|^2]$ to complete the proof.
Summing above bound and applying bound from \eqref{bound_h-z_i-1}, we get
\begin{equation}\label{bound_h-z_i-2}
    \EE\left[\|h(z_k)-z_k\|^2\right]\sum_{i=0}^k\beta_i\leq \Gamma_6+\Gamma_7\sum_{i=0}^k\beta_i\sum_{j=i}^{k-1}\beta_j\alpha_j.
\end{equation}

To bound the second term, we first note that 
\begin{align*}
    \sum_{j=i}^{k-1}\beta_j\alpha_j&\leq\sum_{j=i}^{\infty}\frac{\alpha\beta}{(j+K_1)^{\afrak+\bfrak}}\leq\sum_{j=i}^{\infty}\frac{\alpha\beta}{(j+1)^{\afrak+\bfrak}}\leq\alpha\beta\left(\frac{1}{(i+1)^{\afrak+\bfrak}}+\int_{x=i+1}^\infty \frac{1}{x^{\afrak+\bfrak}}dx\right)\\
    &\stackrel{(a)}{\leq} \alpha\beta\left(\frac{1}{(i+1)^{\afrak+\bfrak}}+\frac{1}{(\afrak+\bfrak-1)(i+1)^{\afrak+\bfrak-1}}\right)\stackrel{(b)}{\leq} \frac{2\alpha\beta}{\afrak+\bfrak-1}\frac{1}{(i+1)^{\afrak+\bfrak-1}}.
\end{align*}
Here, inequality (a) and (b) both follow from the fact that $1< \afrak+\bfrak<2$, which follows from our assumption that $\alpha_k$ and $\beta_k$ are both non-summable but square summable sequences. Next, 
\begin{align*}
    &\sum_{i=0}^k\beta_i\sum_{j=i}^{k-1}\beta_j\alpha_j\leq \frac{2\alpha\beta^2}{\afrak+\bfrak-1}\sum_{i=0}^{k-1}\frac{1}{(i+1)^{\afrak+2\bfrak-1}}\stackrel{(c)}{\leq} \frac{2\alpha\beta^2}{\afrak+\bfrak-1}\sum_{i=0}^\infty\frac{1}{(i+1)^{4\afrak-1}}\stackrel{(d)}{\eqqcolon} D_5.
\end{align*}
 Inequality (c) follows from the fact that $2\bfrak\geq 3\afrak$, which follows from our assumption that $\beta_k^2\leq \alpha_k^3$. Finally, inequality (d) follows from the fact that $4\afrak-1>1$, which follows from our assumption that $\afrak>0.5$ or $\alpha_k$ is square summable. 

Hence the right hand side of the inequality \eqref{bound_h-z_i-2} is bounded by a constant. Note that $\sum_{i=0}^k \beta_i\geq (k+1)\beta_k,$ which follows from our assumption that the sequence $\beta_k$ is non-increasing. This implies that 
$$\EE\left[\|h(z_k)-z_k\|^2\right]\leq \frac{\Gamma_6+\Gamma_7D_5}{(k+1)\beta_k}.$$ 
To complete our proof, we recall that $\|h(y_k)-y_k\|^2\leq 2\|h(z_k)-z_k\|^2+8\|y_k-z_k\|^2.$
This implies that 
\begin{align}\label{final-bound-y}
   \EE\left[\|h(y_k)-y_k\|^2\right] &\leq \frac{2\Gamma_6+2\Gamma_7D_5}{(k+1)\beta_k}+8\Gamma_5\beta_k\leq \frac{C_2}{(k+1)^{1-\bfrak}},
\end{align}
where $C_2\coloneqq (2\Gamma_6+2\Gamma_7D_5)/\beta+8\Gamma_5\beta$. Here, the first inequality follows from \eqref{bound-U}, and the last inequality follows from the fact that $1/(k+1)^{\bfrak}\leq 1/(k+1)^{1-\bfrak}$, which follows from our assumption that $\bfrak>1/2$. This completes the proof.
\end{proof}

\subsection{A Technical Lemma}\label{app:tech}
\begin{lemma}\label{lemma:tech}
Suppose Assumptions \ref{Lipschitz} and \ref{Martingale} hold. For all $k\geq 0$ and $\ystar\in\Ystar$, the iterates $\{x_k,y_k\}$ generated by iteration \eqref{iter-main} satisfy:
    \begin{enumerate}[label=\alph*)]
        \item $\EE[\|\xstar(y_{k+1})-\xstar(y_k)\|^2\mid\FF_k]\leq L_0^2\zeta_1\beta_k^2(1+\|x_k\|^2+\|y_k\|^2)$ for constant $\zeta_1>0$. \label{lemma:yn-diff}
        \item $1+\|x_k\|^2+\|y_k\|^2\leq \zeta_2(1+\|x_k-\xstar(y_k)\|^2+\|y_k-\ystar\|^2)$ for constant $\zeta_2>0$. \label{lemma:xnyn-bound}
    \end{enumerate}
\end{lemma}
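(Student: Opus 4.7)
\subsection*{Proof Proposal for Lemma \ref{lemma:tech}}

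The two parts are routine consequences of the Lipschitzness of $\xstar(\cdot)$ (Lemma \ref{lemma:xstar-lip}), the growth bounds in Assumptions \ref{Lipschitz} and \ref{Martingale}, and the triangle inequality. I will treat the parts separately.

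For part \ref{lemma:yn-diff}, the plan is to first invoke Lemma \ref{lemma:xstar-lip} to obtain $\|\xstar(y_{k+1})-\xstar(y_k)\|^2 \leq L_0^2 \|y_{k+1}-y_k\|^2$, and then expand $y_{k+1}-y_k = \beta_k(g(x_k,y_k)-y_k+M'_{k+1})$ using the slower time-scale update in \eqref{iter-main}. Taking conditional expectation with respect to $\FF_k$ and using the martingale property $\EE[M'_{k+1}\mid\FF_k]=0$ from Assumption \ref{Martingale}, the cross term vanishes and I am left with
\begin{equation*}
\EE[\|y_{k+1}-y_k\|^2\mid\FF_k] = \beta_k^2\bigl(\|g(x_k,y_k)-y_k\|^2 + \EE[\|M'_{k+1}\|^2\mid\FF_k]\bigr).
\end{equation*}
The linear growth condition on $g$ in Assumption \ref{Lipschitz} gives $\|g(x_k,y_k)-y_k\|^2 \leq C(1+\|x_k\|^2+\|y_k\|^2)$ for an explicit constant $C$, and the variance bound in Assumption \ref{Martingale} provides the analogous control on $\EE[\|M'_{k+1}\|^2\mid\FF_k]$. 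Combining these and absorbing constants into a single $\zeta_1$ yields the claim.

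For part \ref{lemma:xnyn-bound}, the plan is a two-step triangle inequality argument. First, $\|y_k\| \leq \|y_k-\ystar\|+\|\ystar\|$, so $\|y_k\|^2 \leq 2\|y_k-\ystar\|^2+2\|\ystar\|^2$. Second, $\|x_k\| \leq \|x_k-\xstar(y_k)\|+\|\xstar(y_k)-\xstar(\ystar)\|+\|\xstar(\ystar)\|$, and applying Lemma \ref{lemma:xstar-lip} to the middle term gives $\|\xstar(y_k)-\xstar(\ystar)\| \leq L_0\|y_k-\ystar\|$. Squaring and using $(a+b+c)^2 \leq 3(a^2+b^2+c^2)$ produces a bound on $\|x_k\|^2$ in terms of $\|x_k-\xstar(y_k)\|^2$, $\|y_k-\ystar\|^2$, and the fixed quantity $\|\xstar(\ystar)\|^2$. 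Adding the two estimates and setting $\zeta_2 \coloneqq \max\{3,\,3L_0^2+2,\,3\|\xstar(\ystar)\|^2+2\|\ystar\|^2+1\}$ (or any suitable upper bound thereof) delivers the inequality.

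Neither part presents a real obstacle; the only care required is tracking constants so that they depend only on $L$, $\mu$, $\cc_1$, $\cc_2$, $\ystar$, and $\xstar(\ystar)$, and not on $k$. In particular, $\ystar\in\Ystar$ is fixed at the start (guaranteed non-empty by Assumption \ref{g-nonexp}), so $\|\ystar\|$ and $\|\xstar(\ystar)\|$ are absolute constants that may be absorbed into $\zeta_1,\zeta_2$ without issue.
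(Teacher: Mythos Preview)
Your proposal is correct and follows essentially the same approach as the paper: both parts use Lemma \ref{lemma:xstar-lip}, the growth bound in Assumption \ref{Lipschitz}, the variance bound in Assumption \ref{Martingale}, and the triangle inequality in exactly the way you describe, arriving at the same constant $\zeta_2=\max\{3,\,3L_0^2+2,\,1+3\|\xstar(\ystar)\|^2+2\|\ystar\|^2\}$. The only cosmetic difference is that in part \ref{lemma:yn-diff} the paper bounds $\|g(x_k,y_k)-y_k+M'_{k+1}\|^2$ by $2\|g(x_k,y_k)-y_k\|^2+2\|M'_{k+1}\|^2$ directly, whereas you first exploit $\EE[M'_{k+1}\mid\FF_k]=0$ to kill the cross term; this yields a slightly smaller $\zeta_1$ but is otherwise the same argument.
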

\begin{proof}
a). Note that 
\begin{align*}
   \|\xstar(y_{k+1})-\xstar(y_k)\|^2&\leq L_0^2\|y_{k+1}-y_k\|^2\leq \beta_k^2L_0^2\|g(x_k,y_k)-y_k+M'_{k+1}\|^2.
\end{align*}
Using Assumption \ref{Lipschitz}, $\|g(x_k,y_k)-y_k\|^2\leq 2(1+\cc_1)(1+\|x_k\|^2+\|y_k\|^2)$. Using Assumption \ref{Martingale}, we get 
$\EE[\|M'_{k+1}\|^2\mid\FF_k]\leq \cc_2(1+\|x_k\|^2+\|y_k\|^2)$. Combining these bounds completes the proof with $\zeta_1=4(1+\cc_1)+2\cc_2$.

b). Note that $\|x_k\|^2\leq 3\|x_k-\xstar(y_k)\|^2+3\|\xstar(y_k)-\xstar(\ystar)\|^2+3\|\xstar(\ystar)\|^2$. Similarly, $\|y_k\|^2\leq 2\|y_k-\ystar\|^2+2\|\ystar\|^2$. Hence 
$$1+\|x_k\|^2+\|y_k\|^2\leq (1+3\|\xstar(\ystar)\|^2+2\|\ystar\|^2)+3\|x_k-\xstar(y_k)\|^2+(3L_0^2+2)\|y_k-\ystar\|^2.$$
This completes the proof with $\zeta_2=\max\{(1+3\|\xstar(\ystar)\|^2+2\|\ystar\|^2),3,(3L_0^2+2)\}$.
\end{proof}

\section{Proofs from Section \ref{sec:applications}: Applications}\label{app:applications-proof}
Throughout this appendix, we first show that, when the iterations are written in a root-finding formulation, the operator in the faster time-scale is strongly monotone and the operator in the slower time-scale is co-coercive. We then use the equivalence between formulations from Appendix \ref{app:relations} to show that the fixed-point mappings are contractive and non-expansive, respectively.
\subsection{Proof for Corollary \ref{coro:minimax}}
\begin{proof}[Proof for Corollary \ref{coro:minimax}]
    Since $H(x,y)$ is $\rho$-strongly-concave in $x$, $-\nabla_x H(x,y)$ is $\rho$-strongly monotone \cite[Theorem 2.1.9]{Nesterov}. 
    Define $f(x,y)=x+(\rho/L^2)\nabla_xH(x,y)$. Then $f(x,y)$ is $\mu-$contractive in $x$ for fixed $y$, where $\mu=\sqrt{1-\rho^2/L^2}$.
    % \begin{align*}
    %     &\|f(x_1,y)-f(x_2,y)\|^2=\frac{\rho^2}{L^4}\|\nabla_xH(x_1,y)-\nabla_xH(x_2,y)\|^2\\
    %     &\;\;+\|x_1-x_2\|^2+2\frac{\rho}{L^2}\langle \nabla_xH(x_1,y)-\nabla_xH(x_2,y),x_1-x_2\rangle\\
    %     &\leq \left(\frac{\rho^2}{L^4}L^2+1+2\frac{\rho}{L^2}(-\rho)\right)\|x_1-x_2\|^2=(1-\rho^2/L^2)\|x_1-x_2\|^2.
    % \end{align*}
    
    Recall that $\Phi(y)=\max_{x\in\RR^{d_1}} H(x,y)$. Then Lemma 4.3 from \cite{jordan-ttsgda} shows that $\nabla \Phi(y)=\nabla_y H(\xstar(y),y)$, and $\nabla \Phi(y)$ is $2L_0L$-Lipschitz where $L_0=L/(1-\mu)$. Since $H(x,y)$ is convex in $y$, $\Phi(y)=\max_x H(x,y)$ is also convex in $y$. 
    This implies that $\nabla \Phi(y)$ is $1/(2LL_0)$ co-coercive \cite[Theorem 2.1.5]{Nesterov}.
    Now, let $g(x,y)=y-\frac{1}{LL_0}\nabla_yH(x,y)$, and hence $g(\xstar(y),y)=y-\frac{1}{LL_0}\nabla \Phi(y)$. Hence $g(\xstar(y),y)$ is non-expansive. 
    
    The iterates in \eqref{iter-minimax} can then be written as $$x_{k+1}=x_k+\alpha_k'(f(x_k,y_k)-x_k+\widetilde{M}_{k+1})\;\;\text{and}\;\;y_{k+1}=y_k+\beta_k'(g(x_k,y_k)-y_k+\widetilde{M}'_{k+1}),$$
    where $$\alpha_k'=(L^2/\rho)\alpha_k, \beta_k'=LL_0\beta_k, \widetilde{M}_{k+1}=(\rho/L^2)M_{k+1}\; \text{and} \;\widetilde{M}'_{k+1}=1/(LL_0)M'_{k+1}.$$ Then above iteration satisfies Assumptions \ref{f-contrac}-\ref{assu:stepsize}. Application of Theorem \ref{thm:main} gives us the required result.
\end{proof}
\subsection{Proof for Corollary \ref{coro:linear}}
\begin{proof}[Proof for Corollary \ref{coro:linear}]
The matrix $A_{11}$ is positive definite and hence satisfies $x^TA_{11}x\geq \lambda_{A_{11},min}\|x\|^2$ and $\|A_{11}x\|\leq \lambda_{A_{11},max}\|x\|$. Here $\lambda_{A_{11},min},\lambda_{A_{11},max}>0$ are the smallest and largest eigenvalues of $A_{11}$, respectively. Hence $f(x,y)=x-\frac{\lambda_{A_{11},min}}{\lambda_{A_{11},max}^2}(A_{11}x+A_{12}y-b_1)$ is $\mu$-contractive in $x$, where $\mu=\sqrt{1-\lambda_{A_{11},min}^2/\lambda_{A_{11},max}^2}$. 

Now, $\Delta=A_{22}-A_{21}A_{11}^{-1}A_{12}$ is non-zero PSD, and hence $\Delta$ is $1/\lambda_{\Delta,max}$-co-coercive \cite[Pg. 79]{VI-book}. Then define $g(x,y)=y-(2/\lambda_{\Delta,max})(A_{21}x+A_{22}y-b_2)$, and hence $g(\xstar(y),y)$ is non-expansive. Here $\xstar(y)=A_{11}^{-1}(-A_{12}y+b_1)$.

Finally, the iterates in \eqref{iter-linear} can be rewritten as
$$x_{k+1}=x_k+\alpha_k'(f(x_k,y_k)-x_k+M_{k+1})\;\;\text{and}\;\;y_{k+1}=y_k+\beta_k'(g(x_k,y_k)-y_k+M'_{k+1}),$$
where $\alpha_k'=\alpha_k(\lambda_{A_{11},max}^2/\lambda_{A_{11},min})$ and $\beta_k'=(\lambda_{\Delta,max}/2)\beta_k$. Also, note that 
\begin{align*}
    M_{k+1}&=\frac{\lambda_{A_{11},min}}{\lambda_{A_{11},max}^2}\left(\tilde{b}_1^{(k+1)}-b_1-(\tilde{A}_{11}^{(k+1)}-A_{11})x-(\tilde{A}_{12}^{(k+1)}-A_{12})y\right)\\
    M'_{k+1}&=\frac{2}{\lambda_{\Delta,max}}\left(\tilde{b}_2^{(k+1)}-b_2-(\tilde{A}_{21}^{(k+1)}-A_{21})x-(\tilde{A}_{22}^{(k+1)}-A_{22})y\right)
\end{align*}
satisfy Assumption \ref{Martingale}. Theorem \ref{thm:main} can then be applied to get the required result.
\end{proof}

\subsection{Proof for Lemma \ref{lemma:Lagrangian}}
\begin{proof}[Proof for Lemma \ref{lemma:Lagrangian}]
\textit{a).} Recall that $\xstar(\lambda)$ is the unique solution to $\nabla H_0(x)=A^T\lambda$. Since $H_0(\cdot)$ is strongly concave, we have
\begin{align*} 
    &\rho\|\xstar(\lambda_1)-\xstar(\lambda_2)\|^2\leq -\langle \xstar(\lambda_1)-\xstar(\lambda_2), \nabla H_0(\xstar(\lambda_1))-\nabla H_0(\xstar(\lambda_2))\rangle \\
    &= -\langle \xstar(\lambda_1)-\xstar(\lambda_2), A^T\lambda_1-A^T\lambda_2\rangle= -\langle A\xstar(\lambda_1)-A\xstar(\lambda_2), \lambda_1-\lambda_2\rangle,
\end{align*}
for $\lambda_1,\lambda_2\in\RR^{d_2}$. We have the assumption that $A$ has linearly independent rows, and hence $(AA^T)^{-1}A$ is the pseudoinverse of matrix $A^T$. Hence we have $$(AA^T)^{-1}A(\nabla H_0(\xstar(\lambda_1))-\nabla H_0(\xstar(\lambda_2)))=\lambda_1-\lambda_2.$$  This implies that 
\begin{align*}
\|\lambda_1-\lambda_2\|&\leq \|(AA^T)^{-1}A\|\|\nabla H_0(\xstar(\lambda_1))-\nabla H_0(\xstar(\lambda_2))\|\\
&\leq L\|(AA^T)^{-1}A\|\|\xstar(\lambda_1)-\xstar(\lambda_2)\|.    
\end{align*}
Hence $\langle -A\xstar(\lambda_1)+A\xstar(\lambda_2),\lambda_1-\lambda_1\rangle \geq \rho/(L\|(AA^T)^{-1}A\|)^2\|\lambda_1-\lambda_2\|^2$. This implies that $-A\xstar(\lambda)$ is strongly monotone in $\lambda$.

\textit{b).} Recall that $\xhat(\cdot)$ is the unique solution to $\PP_\XX(x+\nabla H_0(x)-A^T\lambda)=x$. Using \cite[Prop. 1.4.2]{VI-book}, $\xhat(\lambda)$ satisfies:
$\left\langle x-\xhat(\lambda),\nabla H_0(\xhat(\lambda))-A^T\lambda\right\rangle \leq0,\forall x\in\XX,\lambda\in\RR^{d_2}$. Then we have 
\begin{align*}
    \left\langle \xhat(\lambda_1)-\xhat(\lambda_2),\nabla H_0(\xhat(\lambda_2))-A^T\lambda_2\right\rangle &\leq0\\
    \left\langle \xhat(\lambda_2)-\xhat(\lambda_1),\nabla H_0(\xhat(\lambda_1))-A^T\lambda_1\right\rangle &\leq 0
\end{align*}
Combining these equations gives us
$$-\left\langle \xhat(\lambda_1)-\xhat(\lambda_2),\nabla H_0(\xhat(\lambda_1))-\nabla H_0(\xhat(\lambda_2))\right\rangle \leq -\left\langle \xhat(\lambda_1)-\xhat(\lambda_2),A^T\lambda_1-A^T\lambda_2\right\rangle.$$
Then, we have the following.
\begin{align*} 
    &\rho\|\xhat(\lambda_1)-\xhat(\lambda_2)\|^2\leq -\langle \xhat(\lambda_1)-\xhat(\lambda_2), \nabla H_0(\xhat(\lambda_1))-\nabla H_0(\xhat(\lambda_2))\rangle \\
    &\leq -\langle \xhat(\lambda_1)-\xhat(\lambda_2), A^T\lambda_1-A^T\lambda_2\rangle= -\langle A\xhat(\lambda_1)-A\xhat(\lambda_2), \lambda_1-\lambda_2\rangle,
\end{align*}
Now $\|A\xhat(\lambda_1)-A\xhat(\lambda_2)\|^2\leq \|A\|^2\|\xhat(\lambda_1)-\xhat(\lambda_2)\|^2$, and hence $-A\xhat(\lambda)$ is $(\rho/\|A\|^2)$-co-coercive.
\end{proof}

\subsection{Proof for Corollary \ref{coro:Lagrangian}}
\begin{proof}[Proof for Corollary \ref{coro:Lagrangian}]
The iterates in \eqref{iter-Lagrangian} can be rewritten as
$$x_{k+1}=\PP_\XX\big(x_k+\alpha_k'(f(x_k,\lambda_k)-x_k+M_{k+1})\big)\;\;\text{and}\;\;\lambda_{k+1}=\lambda_k+\beta_k'(g(x_k,\lambda_k)-\lambda_k),$$
where $f(x,\lambda)=(\rho/L^2)(\nabla_x H_0(x)-A^T\lambda)+x,\; g(x,\lambda)=(2\rho/\|A\|^2)(-Ax+b_0)+\lambda$ and $\alpha_k'=(L^2/\rho)\alpha_k, \; \beta_k'=(\|A\|^2/2\rho)\beta_k$. Then using Lemma \ref{lemma:Lagrangian}, the above iteration satisfies assumption \ref{g-nonexp-proj}. Assumption \ref{assu:Lagrangian} b) specifies the Slater's condition, implying existence of $\lambda$ such that $A\xhat(\lambda)=b_0$ \cite[Proposition 27.21]{convexbook}. Application of Theorem \ref{thm:main-projected} gives us bounds on $\EE[\|g(x_k,\lambda_k)-\lambda_k\|^2]$ which is equal to $(4\rho^2/\|A\|^4)\EE[\|Ax-b_0\|^2]$. This completes our proof for Corollary \ref{coro:Lagrangian}.
\end{proof}

\section{Proof for Theorem \ref{thm:main-projected}}\label{app:projected-proof}
\begin{proof}[\textbf{Proof for Theorem \ref{thm:main-projected}}] 
We define the function $f_p(x,y)=\PP_\XX(f(x,y))$ as the projection of function $f(x,y)$ onto the set $\XX$. Note that $f_p(x,y)$ is also $\mu-$contractive in $x$, i.e., $\|f_p(x_1,y)-f_p(x_2,y)\|\leq \mu\|x_1-x_2\|$. Similarly, it is $L-$Lipschitz in $y$. This follows from the non-expansive nature of the projection operator. Also, note that $\xhat(y)$ has been defined as the fixed point for $f_p(\cdot,y)$. 

The outline for this proof is the same as the proof for Theorem \ref{thm:main}. 
\subsubsection*{Lipschitz nature of $\xhat(\cdot)$}
The map $\xhat(\cdot)$ is also Lipschitz with parameter $L_0=L/(1-\mu)$. This follows directly from the proof for Lemma \ref{lemma:xstar-lip}, replacing $f$ with $f_p$.
\subsubsection*{Recursive Bound on $\EE[\|x_k-\xhat(y_k)\|^2]$}
    Note that
        \begin{align*}
            \|x_{k+1}-\xhat(y_{k+1})\|^2&=\left\|\PP_\XX\big(x_k+\alpha_k(f(x_k,y_k)-x_k+M_{k+1})\big)-\xhat(y_{k+1})\right\|^2\nonumber\\
            &\stackrel{(a)}{=} \left\|\PP_\XX\big(x_k+\alpha_k(f(x_k,y_k)-x_k+M_{k+1})\big)-\PP_\XX(\xhat(y_{k+1}))\right\|^2\nonumber\\
            &\leq \left\|x_k+\alpha_k(f(x_k,y_k)-x_k+M_{k+1})-\xhat(y_{k+1})\right\|^2\nonumber.
        \end{align*}
Here, equality (a) follows from the fact that $\xhat(y_{k+1})\in\XX$, and the inequality follows from non-expansiveness of the projection operator. Similar to the expansion in \eqref{split1}, 
    \begin{subequations}\label{split1-pro}
        \begin{align}
            &\|x_{k+1}-\xhat(y_{k+1})\|^2=\|x_k-\xhat(y_k)+\alpha_k(f(x_k,y_k)-x_k)\|^2\label{split11-pro}\\
            &\;\;+\|\alpha_kM_{k+1}+\xhat(y_k)-\xhat(y_{k+1})\|^2\label{split12-pro}\\
            &\;\;+2\Big\langle x_k-\xhat(y_k)+\alpha_k(f(x_k,y_k)-x_k),\alpha_kM_{k+1}+\xhat(y_k)-\xhat(y_{k+1})\Big\rangle. \label{split13-pro}
        \end{align}
    \end{subequations}
    For the first term \eqref{split11-pro}, note that
    \begin{align*}
        \|x_k-\xhat(y_k)+\alpha_k(f(x_k,y_k)-x_k)\|^2&\leq \|x_k-\xhat(y_k)\|^2+\alpha_k^2\|f(x_k,y_k)-x_k\|^2\\
        &\;\;+2\alpha_k\langle x_k-\xhat(y_k),f(x_k,y_k)-x_k\rangle.
    \end{align*}
    Using Assumption \ref{Lipschitz}, we get $\|f(x_k,y_k)-x_k\|^2\leq 2(1+\cc_1)(1+\|x_k\|^2+\|y_k\|^2).$ For the third term, we 
    first note the property of a projection that $\langle z-\PP_\XX(z),x-\PP_\XX(z) \rangle\leq 0$ for all $z\in\RR^{d_1}, x\in\XX$. Hence we get 
    \begin{align*}
        \langle f(\xhat(y_k),y_k)-\xhat(y_k),x_k-\xhat(y_k)\rangle\leq 0,
    \end{align*}
    which follows from the fact that $\xhat(y_k)=\PP_\XX(f(\xhat(y_k),y_k))$. This gives us 
    \begin{align*}
        \langle x_k-\xhat(y_k),f(x_k,y_k)-x_k\rangle&\leq \Big\langle x_k-\xhat(y_k),f(x_k,y_k)-x_k-(f(\xhat(y_k),y_k)-\xhat(y_k))\Big\rangle\\
        &\leq -\mu'\|x_k-\xhat(y_k)\|^2.
    \end{align*}
    This follows from the fact that $(x-f(x,y))$ is a $\mu'$-strongly monotone function in $x$ for all $y$. This finally gives us
    \begin{align}\label{split11-pro-simp}
        &\|x_k-\xhat(y_k)+\alpha_k(f(x_k,y_k)-x_k)\|^2\nonumber\\
        &\leq (1-2\mu'\alpha_k)\|x_k-\xhat(y_k)\|^2+2(1+\cc_1)\alpha_k^2(1+\|x_k\|^2+\|y_k\|^2).
    \end{align}
    The simplification for \eqref{split12-pro} is the same as \eqref{split12}, and hence we do not repeat that here. For \eqref{split13}, 
    \begin{align*}
        &2\Big\langle x_k-\xhat(y_k)+\alpha_k(f(x_k,y_k)-x_k),\xhat(y_k)-\xhat(y_{k+1})\Big\rangle\\
        &\leq \mu'\alpha_k\|x_k-\xhat(y_k)+\alpha_k(f(x_k,y_k)-x_k)\|^2+\frac{1}{\mu'\alpha_k}\|\xhat(y_k)-\xhat(y_{k+1})\|^2.
    \end{align*}
    The second term is dealt with as before. For the first term, we use \eqref{split11-pro-simp} as follows \begin{align*}
        &\mu'\alpha_k\|x_k-\xhat(y_k)+\alpha_k(f(x_k,y_k)-x_k)\|^2\\
        &\leq \mu'\alpha_k\|x_k-\xhat(y_k)\|^2+2(1+\cc_1)\alpha_k^2(1+\|x_k\|^2+\|y_k\|^2).
    \end{align*}
    Combining the above results with \eqref{split12-simp} and \eqref{split13-simp}, we get the following bound.
    \begin{align*}
    \EE\left[\|x_{k+1}-\xhat(y_{k+1})\|^2\right]&\leq \left((1-2\mu'\alpha_k+\mu'\alpha_k\right)\EE\left[\|x_k-\xhat(y_k)\|^2\right]\\
    &\!\!\!\!\!\!\!\!\!\!\!\!\!\!\!\!\!\!\!\!\!\!+ \left(4(1+\cc_1)+2\cc_2\alpha_k^2+2L_0^2\zeta_1\beta_k^2+\frac{L_0^2\zeta_1}{\mu'}\frac{\beta_k^2}{\alpha_k}\right)\EE\left[1+\|x_k\|^2+\|y_k\|^2\right].
\end{align*}
Following the same steps as proof for Lemma \ref{lemma:inter}, we get the following bound for $\yhat\in\Yhat$.
Then we have
\begin{align}\label{inter-x-pro}
    \EE\left[\|x_{k+1}-\xhat(y_{k+1})\|^2\right]&\leq (1-\mu'\alpha_k)\EE\left[\|x_k-\xhat(y_k)\|^2\right]\nonumber\\
    &\;\;+\Gamma_{1,p}\alpha_k^2\left(1+\EE\left[\|x_k-\xhat(y_k)\|^2\right]+\EE\left[\|y_k-\yhat\|^2\right]\right),
\end{align}
where $\Gamma_{1,p}\coloneqq \left(4(1+\cc_1)+2\cc_2\zeta_2+2L_0^2\zeta_1\zeta_2+2L_0^2\zeta_1\zeta_2/\mu'\right)$. This is our intermediate recursive bound on $\EE[\|x_{k+1}-\xhat(y_{k+1})\|^2]$. 

\subsubsection*{Recursive Bound on $\EE[\|y_k-\yhat\|^2]$}
This bound follows the exact same proof as Lemma \ref{lemma:inter}, with $\xstar(y)$ replaced with $\xhat(y)$, $h(y)$ replaced with $h_p(y)=g(\xhat(y),y)$ and $\ystar$ replaced with $\yhat$.
\begin{align}\label{inter-y-pro}
    \EE\left[\|y_{k+1}-\yhat\|^2\right]&\leq \left(1+\Gamma_2\alpha_k^2\right)\EE\left[\|y_k-\yhat\|^2\right]+(\mu'\alpha_k+\Gamma_2\alpha_k^2)\EE\left[\|x_k-\xhat(y_k)\|^2\right]\nonumber\\
    &\;\;+\Gamma_2\alpha_k^2-\beta_k(1-\beta_k)\EE\left[\|y_k-g(x_k,y_k)\|^2\right].
\end{align}
Here $\Gamma_2=2L^2/\mu'+\cc_2\zeta_2$.
\subsubsection*{Boundedness in Expectation}
Adding \eqref{inter-x-pro} and \eqref{inter-y-pro} as done in Lemma \ref{lemma:bounded-expectation}, we get that the iterates are bounded in expectation. For $\kappa_{1,p}=e^{D_1(\Gamma_{1,p}+\Gamma_2)}$,
\begin{align*}
    &\EE\left[\|x_k-\xhat(y_k)\|^2+\|y_k-\yhat\|^2\right]\leq \kappa_{1,p}(\kappa_{1,p}+\|x_0-\xhat(y_0)\|^2+\|y_0-\yhat\|^2)\eqqcolon \Gamma_{3,p}.
\end{align*}
\subsubsection*{Almost Sure Convergence}
Next, we prove almost sure convergence for our iterates. The proof to show almost sure stability of iterates is identical to the proof for Lemma \ref{lemma:bounded-expectation}. We have also shown that $\xhat(y)$ is Lipschitz in $y$. For a fixed $y$, the ODE limit of the iteration is $\dot{x}(t)=\Pi_\XX\left(x(t), f(x(t),y)-x\right)$, where $\Pi_\XX(x,v)=\lim_{\delta\rightarrow0}\frac{\PP_\XX(x+\delta v)-x}{\delta}$. Now, we also note that $\xhat(y)$ is the unique solution to the variational inequality VI$(\XX,x-f(x,y))$, given by the solution $x\in\XX$ which satisfies $\langle z-x, x-f(x,y)\rangle\geq 0$ for all $z\in\XX$. This follows from the fact that $x-f(x,y)$ is strongly monotone in $x$ for all $y$, and Proposition 1.5.8 and Theorem 2.3.3 from \cite{VI-book}. Then Theorem 3.7 from \cite{projected_book} states that the solutions of the ODE above converge to the solution of the VI, which is $\xhat(y)$ in this case. Moreover this equilibrium is global exponentially stable. And hence using Lemma 8.1 from \cite{Borkar-book}, $x_t\rightarrow \xhat(y_t)$ with probability $1$. Now consider the ODE limit $\dot{y}(t)=g(\xhat(y(t)),y(t))-y(t).$ Theorem 12.2 and Remark 3 (mentioned below the theorem) from \cite{Borkar-book} together imply that $y_t$ almost surely converges to the set $\Yhat$.
\subsubsection*{Finite Time Bounds}
The rest of the proof for the finite time bounds is the same as the proof for Theorem \ref{thm:main}, with the only difference in the constants ($\Gamma_1$ and $\Gamma_3$ are replaced by $\Gamma_{1,p}$ and $\Gamma_{3,p}$, respectively).
\end{proof}

\section{Proof for Theorem \ref{thm:gradient}}\label{app:gradient-proof}
\begin{proof}[\textbf{Proof for Theorem \ref{thm:gradient}}]
For this proof, we first give intermediate bounds on $\EE[\|x_{k+1}-\xstar(y_{k+1})\|^2]$ and $\EE[J(y_k)]$. For simplicity, define $G(x_k,y_k)=g(x_k,y_k)-y_k$. 
\subsubsection*{Intermediate Bound on $\EE[\|x_{k+1}-\xstar(y_{k+1})\|^2]$}
    We first give a recursive bound on $\EE[\|x_{k+1}-\xstar(y_{k+1})\|^2]$. Similar to \eqref{split1}, we have the following. 
    \begin{subequations}\label{split3}
        \begin{align}
            &\|x_{k+1}-\xstar(y_{k+1})\|^2\nonumber\\
            &=\|x_k-\xstar(y_k)+\alpha_k(f(x_k,y_k)-x_k)\|^2\label{split31}\\
            &\;\;+\|\alpha_kM_{k+1}+\xstar(y_k)-\xstar(y_{k+1})\|^2\label{split32}\\
            &\;\;+2\Big\langle x_k-\xstar(y_k)+\alpha_k(f(x_k,y_k)-x_k),\alpha_kM_{k+1}+\xstar(y_k)-\xstar(y_{k+1})\Big\rangle. \label{split33}
        \end{align}
    \end{subequations}
 The term \eqref{split31} is bounded by $(1-\mu'\alpha_k)^2\|x_k-\xstar(y_k)\|^2$, where $\mu'=1-\mu$ \eqref{split11-simp}.  We deal with the terms \eqref{split32} and \eqref{split33} separately.
        
        \textbf{Term \ref{split32} --- } Using Lemma \ref{lemma:xstar-lip}, we simplify it as follows.
        \begin{align*}
            \|\alpha_kM_{k+1}+\xstar(y_k)-\xstar(y_{k+1})\|^2&\leq 2\alpha_k^2\|M_{k+1}\|^2+2L_0^2\|y_{k+1}-y_k\|^2.
        \end{align*}
        Now, $y_{k+1}-y_k=\beta_k(G(x_k,y_k)-G(\xstar(y_k),y_k))-\beta_k(\gradJ(y_k))+\beta_kM'_{k+1}$.
        Hence, 
        \begin{align}\label{y-k-diff-grad}
            &\|y_{k+1}-y_k\|^2\leq 3L^2\beta_k^2\|x_k-\xstar(y_k)\|^2+3\beta_k^2\|\gradJ(y_k)\|^2+3\beta_k^2\|M'_{k+1}\|^2.
        \end{align}
        Using Assumption \ref{assu:grad-all}c, we get
        \begin{align*}
            &\EE[\|\alpha_kM_{k+1}+\xstar(y_k)-\xstar(y_{k+1})\|^2\mid\FF_k]\nonumber\\
            &\leq 2\cc_2(\alpha_k^2+3L_0^2\beta_k^2)+6L_0^2\beta_k^2\|\gradJ(y_k)\|^2+6L_0^2L^2\beta_k^2\|x_k-\xstar(y_k)\|^2.
        \end{align*}
        
        \textbf{Term \ref{split33} --- } To simplify \eqref{split33}, note that
        \begin{align*}
            &2\Big\langle x_k-\xstar(y_k)+\alpha_k(f(x_k,y_k)-x_k),\xstar(y_k)-\xstar(y_{k+1})\Big\rangle\\
            &\leq \frac{\mu'\alpha_k}{2}\|x_k-\xstar(y_k)+\alpha_k(f(x_k,y_k)-x_k)\|^2+\frac{2}{\mu'\alpha_k}\|\xstar(y_k)-\xstar(y_{k+1})\|^2.
        \end{align*}
        Using \eqref{y-k-diff-grad} and the fact that $M_{k+1}$ is a martingale difference sequence, we get
        \begin{align*}
            &\EE\Big[2\langle x_k-\xstar(y_k)+\alpha_k(f(x_k,y_k)-x_k),\alpha_kM_{k+1}+\xstar(y_k)-\xstar(y_{k+1})\big\rangle\mid\FF_k\Big]\nonumber\\
            &\leq \frac{\mu'\alpha_k}{2}\|x_k-\xstar(y_k)\|^2+\frac{6L_0^2}{\mu'}\frac{\beta_k^2}{\alpha_k}(\cc_2+\|\gradJ(y_k)\|^2+L^2\|x_k-\xstar(y_k)\|^2).
        \end{align*}

Having obtained a bound on the conditional expectation of the three terms in \eqref{split3}, we now combine the bounds after taking expectation. We additionally use our assumption that $\beta_k^2\leq \alpha_k^3\leq \alpha_k^2$.
% \begin{align*}
%     &\EE\left[\|x_{k+1}-\xstar(y_{k+1})\|^2\right]\leq \\
%     &\leq \left((1-\mu'\alpha_k)^2+\frac{\mu'\alpha_k}{2}+6L_0^2L^2\beta_k^2+\frac{6L_0^2L^2}{\mu'}\frac{\beta_k^2}{\alpha_k}\right)\EE\left[\|x_k-\xstar(y_k)\|^2\right]\\
%     &\;\;+ \left(6L_0^2\beta_k^2+\frac{6L_0^2}{\mu'}\frac{\beta_k^2}{\alpha_k}\right)\EE\left[\|\gradJ(y_k)\|^2\right]+2\cc_2\left(\alpha_k^2+3L_0^2\beta_k^2+\frac{3L_0^2}{\mu'}\frac{\beta_k^2}{\alpha_k}\right).
% \end{align*}
% Now, we use our assumption that $\beta_k^2\leq \alpha_k^3$, along with the assumption that $\alpha_k\leq 1$. 
\begin{align*}
    &\EE\left[\|x_{k+1}-\xstar(y_{k+1})\|^2\right]\leq  D_6\alpha_k^2\EE\left[\|\gradJ(y_k)\|^2\right]+D_7\alpha_k^2\\
    &\;\;+ \left(1-2\mu'\alpha_k+\mu'^2\alpha_k^2+\frac{\mu'\alpha_k}{2}+6L_0^2L^2\alpha_k^2+\frac{6L_0^2L^2}{\mu'}\alpha_k^2\right)\EE\left[\|x_k-\xstar(y_k)\|^2\right].
\end{align*}
Here $D_6=6L_0^2(1+1/\mu')$ and $D_7=2\cc_2(1+3L_0^2+3L_0^2/\mu')$. For appropriate choice of $C_4$ in our assumption that $\alpha\leq C_4$, we get  $(\mu'^2+6L_0^2L^2+6L_0^2L^2/\mu')\alpha_k^2\leq \mu'\alpha_k/2$. 
\begin{align*}
    &\EE\left[\|x_{k+1}-\xstar(y_{k+1})\|^2\right]\\
    &\leq (1-\mu'\alpha_k)\EE\left[\|x_k-\xstar(y_k)\|^2\right]+ D_6\alpha_k^2\EE\left[\|\gradJ(y_k)\|^2\right]+D_7\alpha_k^2.
\end{align*}
This gives us
\begin{align*}
    &\mu'\alpha_k\EE\left[\|x_k-\xstar(y_k)\|^2\right]\leq \EE\left[\|x_k-\xstar(y_k)\|^2\right]-\EE\left[\|x_{k+1}-\xstar(y_{k+1})\|^2\right] \\
    &\;\;\;\;\;\;\;\;\;\;\;\;\;\;\;+ D_6\alpha_k^2\EE\left[\|\gradJ(y_k)\|^2\right]+D_7\alpha_k^2.
\end{align*}
Our intermediate bound for $\EE[\|x_k-\xstar(y_k)\|^2]$ is the following.
\begin{align}\label{inter-x-gradient}
    &\EE\left[\|x_k-\xstar(y_k)\|^2\right]\leq \frac{1}{\mu'\alpha_k}\EE\left[\|x_k-\xstar(y_k)\|^2\right]-\frac{1}{\mu'\alpha_k}\EE\left[\|x_{k+1}-\xstar(y_{k+1})\|^2\right] \nonumber\\
    &\;\;\;\;\;\;\;\;\;\;\;\;\;\;\;+ (D_6/\mu')\alpha_k\EE\left[\|\gradJ(y_k)\|^2\right]+(D_7/\mu')\alpha_k.
\end{align}

\subsubsection*{Recursive Bound on $\EE[J(y_k)]$}
Note that $\gradJ(\cdot)$ is Lipschitz with parameter $L_J\coloneqq L(L_0+1)$. To see this, note that 
\begin{align*}
    \|\gradJ(y_2)-\gradJ(y_1)\|&\leq \|G(\xstar(y_2),y_2)-G(\xstar(y_1),y_1)\|\\
    &\leq L(\|\xstar(y_2)-\xstar(y_1)\|+\|y_2-y_1\|)\leq L(L_0+1)\|y_2-y_1\|.
\end{align*}
Then, using the definition of smoothness, we have the following.
$J(y_2)\leq J(y_1)+\langle\gradJ(y_1),y_2-y_1\rangle+\frac{L_J}{2}\|y_2-y_1\|^2.$
Substituting $y_2=y_{k+1}$ and $y_1=y_k$,
\begin{align}\label{J-split}
    J(y_{k+1})\leq J(y_k)+\langle \gradJ(y_k),y_{k+1}-y_k\rangle+\frac{L_J}{2}\|y_{k+1}-y_k\|^2.
\end{align}
We have already bounded the last term in \eqref{y-k-diff-grad}. We simplify the first term as follows. 

\textbf{Term $\langle \gradJ(y_k),y_{k+1}-y_k\rangle$ --- } 
Using the simplification above \eqref{y-k-diff-grad}, we get 
\begin{align*}
    &\langle \gradJ(y_k),y_{k+1}-y_k\rangle\\
    &=\beta_k \langle \gradJ(y_k),G(x_k,y_k)-G(\xstar(y_k),y_k)\rangle -\beta_k\|\gradJ(y_k)\|^2+\beta_k\langle \gradJ(y_k),M'_{k+1}\rangle.
\end{align*}
Since $M'_{k+1}$ is a martingale difference sequence, we have $\EE[\langle \gradJ(y_k),M'_{k+1}\rangle\mid\FF_k]=0$. For the first term, we use Cauchy-Schwarz inequality, Young's inequality, and the Lipschitzness of function $G(\cdot)$ to obtain -
\begin{align*}
    \beta_k \langle \gradJ(y_k),G(x_k,y_k)-G(\xstar(y_k),y_k)\rangle\leq \beta_k\frac{1}{4}\|\gradJ(y_k)\|^2+\beta_kL^2\|x_k-\xstar(y_k)\|^2.
\end{align*}
Hence, we get the following.
\begin{align*}
    \EE\left[\langle \gradJ(y_k),y_{k+1}-y_k\rangle\mid\FF_k\right]=L^2\beta_k\|x_k-\xstar(y_k)\|^2-\frac{3}{4}\beta_k\|\gradJ(y_k)\|^2.
\end{align*}

We now return to \eqref{J-split}. Using our assumption that $\beta_k^2\leq \beta_k$, and taking expectation, we get our intermediate bound on $\EE[J(y_k)]$.
\begin{align}
    \EE[J(y_{k+1})]&\leq \EE[J(y_k)]+\beta_k(L^2+1.5L_JL^2)\EE\left[\|x_k-\xstar(y_k)\|^2\right]\nonumber\\
    &\;\;+(-0.75\beta_k+1.5L_J\beta_k^2)\EE\left[\|\gradJ(y_k)\|^2\right]+1.5L_J\cc_2\beta_k^2.
\end{align}
\subsection*{Final Bounds}
Having obtained intermediate bounds, we substitute the bound for $\EE[\|x_k-\xstar(y_k)\|^2]$ into our bound for $\EE[J(y_{k+1})]$. This gives us
\begin{align*}
    \EE[J(y_{k+1})]&\leq \EE[J(y_k)]+D_8(\beta_k/\alpha_k)\left(\left[\|x_k-\xstar(y_k)\|^2\right]-\EE\left[\|x_{k+1}-\xstar(y_{k+1})\|^2\right]\right)\\
    &+D_6D_8\beta_k\alpha_k\EE\left[\|\gradJ(y_k)\|^2\right]+D_7D_8\beta_k\alpha_k\\
    &+(-0.75\beta_k+1.5L_J\beta_k^2)\EE\left[\|\gradJ(y_k)\|^2\right]+1.5L_J\cc_2\beta_k^2.
\end{align*}
Here, $D_8=(L^2+1.5L_JL^2)/\mu'$. For appropriate choice of $C_4$ in our assumption that $\alpha\leq C_4$, we get $\beta_k^2\leq \beta_k\alpha_k$, and $(D_6D_8+1.5L_J)\beta_k\alpha_k\leq 0.5\beta_k$.
\begin{align*}
    \EE[J(y_{k+1})]&\leq \EE[J(y_k)]-0.25\beta_k\EE\left[\|\gradJ(y_k)\|^2\right]+(D_7D_8+1.5L_J\cc_2)\beta_k\alpha_k\\
    &\;\;+D_8(\beta_k/\alpha_k)\left(\EE\left[\|x_k-\xstar(y_k)\|^2\right]-\EE\left[\|x_{k+1}-\xstar(y_{k+1})\|^2\right]\right).
\end{align*}
This gives us the following relation for all $i\geq 0$.
\begin{align*}
    \beta_i\EE\left[\|\gradJ(y_i)\|^2\right]&\leq 4\EE[J(y_i)]-4\EE[J(y_{i+1})]+4(D_7D_8+1.5L_J\cc_2)\beta_i\alpha_i\\
    &\;\;+ 4D_8(\beta_i/\alpha_i)\left(\EE\left[\|x_i-\xstar(y_i)\|^2\right]-\EE\left[\|x_{i+1}-\xstar(y_{i+1})\|^2\right]\right).
\end{align*}
Summing over $i=0$ to $k$ gives us
\begin{align}\label{J-almost-done}
    \sum_{i=0}^k\beta_i\EE\left[\|\gradJ(y_i)\|^2\right]&\leq 4J(y_0)-4\EE[J(y_{k+1})]+4(D_7D_8+1.5L_J\cc_2)\sum_{i=0}^k \beta_i\alpha_i\nonumber\\
    &\;\;+4D_8\sum_{i=0}^k \frac{\beta_i}{\alpha_i}\left(\EE\left[\|x_i-\xstar(y_i)\|^2\right]-\EE\left[\|x_{i+1}-\xstar(y_{i+1})\|^2\right]\right).
\end{align}
 We first need to bound the summation on the right-hand side to complete our bound.
\begin{align*}
    &\sum_{i=0}^k \frac{\beta_i}{\alpha_i}\left(\EE\left[\|x_i-\xstar(y_i)\|^2\right]-\EE\left[\|x_{i+1}-\xstar(y_{i+1})\|^2\right]\right)\\
    &\leq (\beta/\alpha)\|x_0^2-\xstar(y_0)\|^2-(\beta_{k}/\alpha_{k})\EE\left[\|x_{k+1}-\xstar(y_{k+1})\|^2\right]\\
    &\;\;-\sum_{i=1}^k \EE[\|x_i-\xstar(y_i)\|^2]\left(\frac{\beta_{i-1}}{\alpha_{i-1}}-\frac{\beta_i}{\alpha_i}\right)\leq  (\beta/\alpha)\|x_0^2-\xstar(y_0)\|^2.
\end{align*}
Here, the final inequality follows from the fact that $\beta_k/\alpha_k$ is a decreasing sequence. Returning to \eqref{J-almost-done}, we have the assumption that the sequence $\alpha_k\beta_k$ is $\mathcal{O}(1/(k+1))$.  We also have the assumption that the function $J(\cdot)$ is lower bounded. Hence the right-hand side of \eqref{J-almost-done} is bounded by $D_9(1+\log(k+1))$ for some constant $D_9>0$. Hence,
\begin{align*}
    D_9(1+\log(k+1))\geq \sum_{i=0}^k\beta_i \EE\left[\|\gradJ(y_i)\|^2\right]&\geq \left(\min_{0\leq i\leq k}\EE\left[\|\gradJ(y_i)\|^2\right] \right)\sum_{i=0}^k\beta_i\\
    &\geq \left(\min_{0\leq i\leq k}\EE\left[\|\gradJ(y_i)\|^2\right] \right)k\beta_k.
\end{align*}
This completes our proof with $$\min_{0\leq i\leq k}\EE\left[\|\gradJ(y_i)\|^2\right]\leq \frac{C_3(1+\log(k+1))}{(k+1)^{1-\bfrak}},$$
where $C_3=D_9/\beta$. The required $C_4$ in the assumption $\alpha\leq C_4$ for the result to hold is $C_4=\min\{1, (1-\mu)^4/(2+24L^4), (1-\mu)^5/(18(L^6+L^5+L^4+L^2+L))\}$.

\end{proof}

\section{Equivalence Between Formulations}\label{app:relations}
We first show how a formulation using contractive and non-expansive maps implies the formulation using strongly monotone and co-coercive operators.
\begin{lemma}\label{lemma:direction-1}
    Consider function $h:\RR^d\mapsto\RR^d$. Then,
    \begin{enumerate}
        \item If $h(\cdot)$ is $\mu$-contractive for $\mu\in[0,1)$, then function $q(x)=x-h(x)$ is $(1-\mu)$-strongly monotone and $(1+\mu)$-Lipschitz.
        \item If $h(\cdot)$ is non-expansive, then function $q(x)=x-h(x)$ is $0.5$-co-coercive.
    \end{enumerate}
\end{lemma}
\begin{proof}
    \textit{a). } For $x_1,x_2\in\RR^d$, 
    \begin{align*}
        \langle x_1-x_2, q(x_1)-q(x_2)\rangle = \|x_1-x_2\|^2-\langle x_1-x_2,h(x_1)-h(x_2)\rangle.
    \end{align*}
    Now, $\langle x_1-x_2,h(x_1)-h(x_2)\rangle\leq \|x_1-x_2\|\|h(x_1)-h(x_2)\|\leq \mu\|x_1-x_2\|^2.$
    This implies that $\langle x_1-x_2, q(x_1)-q(x_2)\rangle\geq (1-\mu)\|x_1-x_2\|^2$. The function $q(x)$ is trivially Lipschitz with constant $(1+\mu)$.

    \textit{b). } For $x_1,x_2\in\RR^d$, let $s=x-y$ and $t=h(x)-h(y)$. Then, $q(x)-q(y)=s-t$. We first note that $2\langle s-t,s\rangle = \|s-t\|^2+\|s\|^2-\|t\|^2$. Since $h(\cdot)$ is non-expansive, $\|t\|\leq \|s\|$. This implies that $2\langle s-t,s\rangle\geq \|s-t\|^2$, and hence $$\langle q(x)-q(y), x-y\rangle\geq 0.5\|q(x)-q(y)\|^2.$$
    This completes the proof for Lemma \ref{lemma:direction-1}.
\end{proof}

We now show the converse directions.
\begin{lemma}\label{lemma:direction-2}
Consider function $h:\RR^d\mapsto\RR^d$. Suppose $\cc,\ell>0$. Then, 
    \begin{enumerate}[label=\alph*).]
        \item If $h(\cdot)$ is $\cc$-strongly monotone and $\ell$-Lipschitz, then function $q(x)=x-(\cc/\ell^2)h(x)$ is $\sqrt{1-c^2/\ell^2}$-contractive. 
        \item If $h(\cdot)$ is $\cc$-co-coercive, then function $q(x)=x-2\cc h(x)$ is non-expansive.
    \end{enumerate}
\end{lemma}
\begin{proof}
    \textit{a).} For $x_1,x_2\in\RR^d$,
    \begin{align*}
        \|q(x_1)-q(x_2)\|^2&=\|x_1-x_2\|^2+\frac{\cc^2}{\ell^4}\|h(x_1)-h(x_2)\|^2-2\frac{\cc}{\ell^2}\langle h(x_1)-h(x_2),x_1-x_2\rangle\\
        &\leq \left(1+\frac{\cc^2}{\ell^4}\ell^2-2\frac{\cc}{\ell^2}\cc\right)\|x_1-x_2\|^2=(1-\cc^2/\ell^2)\|x_1-x_2\|^2.
    \end{align*}
    The inequality follows from the strongly monotone and Lipschitz nature of $h(\cdot)$.

    \textit{b).} For $x_1,x_2\in\RR^d$,
    \begin{align*}
        \|q(x_1)-q(x_2)\|^2&=\|x_1-x_2\|^2+4\cc^2\|h(x_1)-h(x_2)\|^2-4\cc\langle h(x_1)-h(x_2),x_1-x_2\rangle\\
        &\leq \|x_1-x_2\|^2+(4\cc^2-4\cc^2)\|h(x_1)-h(x_2)\|^2= \|x_1-x_2\|^2.
    \end{align*}
    The inequality follows from the co-coercive nature of $h(\cdot)$.
\end{proof}

\bibliographystyle{siamplain}
\bibliography{references}

\end{document}